\documentclass[12pt]{amsart}
\usepackage{amssymb, colordvi}
\usepackage{multirow}
\usepackage{graphicx}
%
%
\headheight=8pt       
\topmargin=30pt       
\textheight=611pt     \textwidth=456pt
\oddsidemargin=6pt   \evensidemargin=6pt
%
%

\numberwithin{equation}{section}
\newtheorem{thm}{Theorem}
\numberwithin{thm}{section}
\newtheorem{prop}[thm]{Proposition}
\newtheorem{lemma}[thm]{Lemma}
\newtheorem{cor}[thm]{Corollary}

\newtheorem{example}[thm]{Example}
\newtheorem{remark}[thm]{Remark}

\newenvironment{ex}{\begin{example}\rm}{\end{example}}

\newcounter{FNC}[page]
\def\fauxfootnote#1{{\addtocounter{FNC}{2}$^\fnsymbol{FNC}$%
     \let\thefootnote\relax\footnotetext{$^\fnsymbol{FNC}$#1}}}


\newcommand{\calS}{\mathcal{S}}

\newcommand{\calV}{\mathcal{V}}
\newcommand{\calW}{\mathcal{W}}

\usepackage{xcolor}

\newcommand{\ind}{\mathbf{1}}
\newcommand{\bW}{\mathbf{W}}

\newcommand{\bV}{\mathbf{V}}

\newcommand{\R}{\mathbb{R}}
\newcommand{\Z}{\mathbb{Z}}
\newcommand{\KK}{\mathbb{K}}


\title[Fewnomial bounds for tropical polynomial systems]{Irrational mixed decomposition and sharp fewnomial bounds for tropical polynomial systems}
%

\author{Fr\'ed\'eric Bihan}
\address{Laboratoire de Math\'ematiques\\
         Universit\'e de Savoie\\
         73376 Le Bourget-du-Lac Cedex\\
         France}

\email{Frederic.Bihan@univ-savoie.fr}
\urladdr{http://www.lama.univ-savoie.fr/~bihan/}

\begin{document}
\maketitle
\begin{abstract}

Given convex polytopes $P_1,\ldots,P_r \subset \R^n$ and finite subsets $\calW_I$ of the Minkowsky sums $P_I=\sum_{i \in I} P_i$, we consider the quantity
$N(\bW)=\sum_{I \subset {\bf [}r {\bf ]} } {(-1)}^{r-|I|} \big| \calW_I \big|$. If $\calW_I=\Z^n \cap P_I$ and $P_1,\ldots,P_n$ are lattice polytopes in $\R^n$, then $N(\bW)$ is the classical {\it mixed volume} of $P_1,\ldots,P_n$ giving the number of complex solutions of a general complex polynomial system with Newton polytopes $P_1,\ldots,P_n$.
We develop a technique that we call {\it irrational mixed decomposition} which allows us to estimate
$N(\bW)$ under some assumptions on the family $\bW=(\calW_I)$. In particular, we are able to show the nonnegativity of $N(\bW)$ in some important cases. 
A special attention is paid to the family $\bW=(\calW_I)$ defined by $\calW_I=\sum_{i \in I} \calW_i$,
where $\calW_1,\ldots,\calW_r$ are finite subsets of $P_1,\ldots,P_r$. The associated quantity
$N(\bW)$ is called {\it discrete mixed volume} of $\calW_1,\ldots,\calW_r$.
Using our irrational mixed decomposition technique, we show that for $r=n$ the discrete mixed volume is an upper bound for the number of nondegenerate solutions of a tropical polynomial system with supports $\calW_1,\ldots,\calW_n \subset \R^n$.
We also prove that the discrete mixed volume associated with  $\calW_1,\ldots,\calW_r$ is bounded from above by the Kouchnirenko number $\prod_{i=1}^r (|\calW_i|-1)$. For $r=n$ this number was proposed as a bound for the number of nondegenerate positive solutions of any real polynomial system with supports $\calW_1,\ldots,\calW_n \subset \R^n$.
This conjecture was disproved, but our result shows that the Kouchnirenko number is a sharp bound for the number of nondegenerate positive solutions
of real polynomial systems constructed by means of the combinatorial patchworking.
\end{abstract}
\section{Introduction}


It follows from Descartes' rule of signs that a real univariate polynomial with $N$ monomials has at most $N-1$ positive roots. This {\it  Descartes bound} is sharp and a major problem in real geometry consists in generalizing such a sharp bound to the multivariable case. 
The {\it support} of a Laurent polynomial $f(x)=\sum c_w x^w \in \R[x_1^{\pm 1},\ldots,x_n^{\pm 1}]$ is the set of exponent vectors $w \in \Z^n$ with non-zero coefficient $c_w$.
Consider a system $f_1=\cdots=f_n=0$ of Laurent polynomial equations with real coefficients in $n$ variables. A solution of the system is {\it  positive} if all its coordinates are positive and {\it  nondegenerate} if the jacobian determinant of $f_1,\ldots,f_n$  does not vanish at this solution. Denote by $\calW_i$ the support of $f_i$ and by $|\calW_i|$ its number of elements. Several conjectures have been made towards a generalized Descartes bound.
In the mid 70's, A. Kouchnirenko proposed $$\prod_{i=1}^n(|\calW_i|-1)$$
as a bound for the number of nondegenerate positive solutions of any real poynomial system with supports $\calW_1,\ldots,\calW_n$. Thanks to Descartes bound, the previous Kouchnirenko bound is the sharp bound for systems where each variable appears in one and only one equation. Kouchnirenko conjecture has been disproved a bit later by a russian student K. Sevostyanov. The counter-example was lost but a counter-example of the same kind was found in 2002 by B. Hass ~\cite{Ha}. Both counter-examples are systems of two trinomial equations in two variables with $5>(3-1)(3-1)$ nondegenererate positive solutions. 
T. Li, J.-M. Rojas and X. Wang showed that $5$ is in fact the sharp bound for this kind of polynomial system~\cite{LRW03}.
%
In 1980, A. Khovansky~\cite{Kho} found several bounds on topological invariants of varieties in terms of quantities measuring the complexity of their defining equations. In particular, he obtained fewnomial bounds for the number of nondegenerate positive solutions of polynomial systems, 
which were improved in 2007 by F. Sottile and F. Bihan~\cite{BS07}. But still the resulting bounds are not sharp in general.
\smallskip

Let us denote by $P_i$ the convex-hull of the support $\calW_i$ of the polynomial $f_i$. This is the  Newton polytope of $f_i$. The famous {\it Bernstein Theorem} (see~\cite{B}) bounds the number of nondegenerate complex solutions with nonzero coordinates of $f_1=\cdots=f_n=0$ by the {\it  mixed volume} of $P_1,\ldots,P_n$.
%
Write $P_I$ for the Minkowsky sum $\sum_{i \in I} P_i$, where $I$ is any nonempty subset
of $[n]=\{1,2,\ldots,n\}$. We have the classical formula for the mixed volume
\begin{equation}\label{E:mixedalternate}
\mbox{MV}(P_1,\dotsc,P_n)=\sum_{ \emptyset \neq I \subset {\bf [}n {\bf ]}} {(-1)}^{n-|I|} \mbox{Vol}(P_I),
\end{equation}
where $\mbox{Vol}(\cdot)$ is the usual euclidian volume in $\R^n$.
For instance, when $n=2$ we get $\mbox{MV}(P_1,P_2)=\mbox{Vol}(P_1+P_2)-\mbox{Vol}(P_1)-\mbox{Vol}(P_2)$.
%
%
Another probably less known formula involves the set of integer points of the polytopes $P_I$:
\begin{equation}\label{E:mixednumber}
\mbox{MV}(P_1,\dotsc,P_n)=\sum_{I \subset {\bf [}n {\bf ]}} {(-1)}^{n-|I|} |\Z^n \cap P_I|,
\end{equation} where the summation is taken over all subsets $I$ of $[n]$ including the emptyset
with the convention that $|\Z^n \cap P_{\emptyset}|=1$.
\smallskip

For any nonempty $I \subset [n]$, write $\calW_I$ for the set of points $\sum_{i \in I} w_i$ over all $w_i \in \calW_i$ with $i \in I$. Note that $P_I$ is the convex-hull of $\calW_I$. In view of \eqref{E:mixedalternate} and \eqref{E:mixednumber}, it is natural to expect that a generalized Descartes bound should depend not only on the numbers of elements of $\calW_1,\ldots,\calW_n$, but also on the number of elements of all sums $\calW_I$.
This leads us to define for {\it any number $r$} of finite sets $\calW_1,\ldots,\calW_r$ in $\R^n$ the associated {\it discrete mixed volume}
\begin{equation}\label{E:N}
D(\calW_1,\ldots,\calW_{r})=\sum_{I \subset {\bf [}r {\bf ]} } {(-1)}^{r-|I|}
\big| \calW_I \big|,
\end{equation}
where the sum is taken over all subsets $I$ of ${\bf [}r {\bf ]} $ including the empty set with the convention that $|\calW_{\emptyset}|=1$. Note that we do not impose that the sets $\calW_i$ are contained in $\Z^n$.
For instance, when $r=2$ we have $D(\calW_1,\calW_{2})=|\calW_1+\calW_{2}|-|\calW_1|-|\calW_{2}|+1$. 
%
%
%
%
We observe that $|\calW_1+\calW_{2}| \leq |\calW_1| \cdot |\calW_{2}|$, which readily implies that $D(\calW_1,\calW_{2})$ is bounded from above
by the Kouchnirenko number  $(|\calW_1|-1) (|\calW_2|-1)$. As a consequence, the above-mentionned counter-examples to the Kouchnirenko conjecture show that the
discrete mixed volume (with $r=n$) is not in general an upper bound
for the number of nondegenerate positive solutions of real polynomial systems with given supports.
However, we shall see that it provides bounds of this kind for
tropical polynomial systems
and real polynomial systems constructed by the combinatorial patchworking.
\smallskip

Tropical algebraic geometry is a recent area of mathematics which has led to several important results in different domains like real and complex geometry, enumerative geometry and combinatorics to cite only very few of them.
%
%
%
%
A tropical polynomial is a polynomial where the classical addition and multiplication are replaced with the maximum and the classical addition, respectively.
Thus a tropical polynomial is the maximum of a finite number of affine-linear functions. A tropical hypersurface in $\R^n$ is the corner locus of a tropical polynomial in $n$ variables.
This is a piecewise linear $(n-1)$-manifold in $\R^n$. A tropical polynomial also determines a convex polyhedral subdivision of its Newton polytope. This subdivision is {\it dual} to the tropical hypersurface via a bijection sending a polytope of positive dimension to a piece of complementary dimension and a vertex of the subdivision to a connected component of the complementary part of the tropical hypersurface.


%

The union of tropical hypersurfaces in $\R^n$ is dual to a mixed subdivision of the Minkowsky sum of their Newton polytopes. Tropical hypersurfaces intersect {\it transversely} at a point $p \in \R^n$ if the codimensions of the linear pieces
of the tropical hypersurfaces which contain $p$ in their relative interior sum up to the codimension
of their common intersection (see~\cite{BJSST07}) .
%
%
A transverse intersection point of tropical hypersurfaces will be called  {\it nondegenerate} by analogy with the classical case. Nondegenerate solutions of a system of $n$ tropical polynomial equations in $n$ variables are in one-to-one correspondence with the {\it mixed polytopes} of the dual mixed subdivision, which are $n$-dimensional Minkowsky sums of $n$ segments.
We show that the discrete mixed volume provides a fewnomial bound for the number of nondegenerate solutions of a tropical polynomial system.
\begin{thm}\label{T:Main} The number of nondegenerate solutions of a system of tropical polynomial equations with supports
$\calW_1,\ldots,\calW_n \subset \R^n$ does not exceed $D(\calW_1,\ldots,\calW_{n})$.
\end{thm}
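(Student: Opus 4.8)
The plan is to pass to the dual picture and count mixed cells. By the duality recalled above, the nondegenerate solutions are in bijection with the mixed cells --- the full-dimensional sums $e_1+\cdots+e_n$ of $n$ segments $e_i$, one per support --- of the mixed subdivision $\calS$ of $P_1+\cdots+P_n$ cut out by the tropical coefficients (equivalently, by a lifting of each $\calW_i$). First I would reduce to the case in which $\calS$ is a fine mixed subdivision: refining the lifting replaces a cell $F_1+\cdots+F_n$ by cells $F_1'+\cdots+F_n'$ with $F_i'\subseteq F_i$, so a summand that is a point stays a point. Hence non-mixed cells stay non-mixed and mixed cells only break into smaller mixed cells, the number of mixed cells does not drop, and it suffices to bound it for a fine mixed subdivision.

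The natural first attempt is induction on $n$ through the last tropical hypersurface $T_n$. Every nondegenerate solution lies in the relative interior of a facet of $T_n$ dual to an edge $e$ of the induced subdivision of $\calW_n$; on the affine hyperplane $H_e$ spanned by that facet it is a nondegenerate solution of the system obtained by restricting $f_1,\dots,f_{n-1}$, whose supports are the images $\pi_e(\calW_1),\dots,\pi_e(\calW_{n-1})$ of the $\calW_i$ under the projection $\pi_e$ along the edge direction of $e$. The inductive hypothesis then bounds the count on $H_e$ by $D(\pi_e(\calW_1),\dots,\pi_e(\calW_{n-1}))$, and summing over the edges $e$ gives an upper bound for the total.

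The hard part --- and the reason this naive induction does not by itself give the theorem --- is that this sum overcounts. Splitting $D$ according to whether a subset contains $n$ one gets
\[
D(\calW_1,\dots,\calW_n)=\sum_{I\subseteq[n-1]}(-1)^{n-1-|I|}\bigl(|\calW_I+\calW_n|-|\calW_I|\bigr),\qquad D(\pi_e(\calW_1),\dots,\pi_e(\calW_{n-1}))=\sum_{I\subseteq[n-1]}(-1)^{n-1-|I|}|\pi_e(\calW_I)|,
\]
so the desired estimate becomes an inequality between two alternating sums, which cannot be checked term by term. Already for two triangles one finds three facets each contributing $1$, for a total of $3$, against the true value $D=1$: the excess comes from solutions that $\pi_e$ pushes outside the bounded facet, and it is governed precisely by the lower sumsets $\calW_I$ with $I\subsetneq[n]$.

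To control this I would abandon the facet-by-facet bookkeeping for the global irrational mixed decomposition developed above, applied to the family $\bW=(\calW_I)_{I\subseteq[n]}$ with $\calW_I=\sum_{i\in I}\calW_i$. After a generic irrational translation of each configuration the decomposition realizes $D(\calW_1,\dots,\calW_n)=N(\bW)$ as an honest nonnegative count of its full-dimensional mixed cells, and --- this is the role of the irrationality --- no face of the decomposition lies in a coordinate-type hyperplane, so the contributions of cells meeting the boundary are assigned to the signed terms $(-1)^{n-|I|}|\calW_I|$ and cancel. It then remains to inject the mixed cells of $\calS$ into the cells counted by $N(\bW)$, for instance by sending $e_1+\cdots+e_n$ to the cell determined by its vertex $\sum_i a_i\in\calW_{[n]}$ together with the edge directions; checking that this map is well defined and injective, equivalently that the cancellation above is exact, is where the main work lies.
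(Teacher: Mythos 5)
Your overall route is the paper's: pass to the dual mixed subdivision, reduce to a pure one, and control the number of mixed cells via the irrational mixed decomposition applied to the family $\bW=(\calW_I)$ with $\calW_I=\sum_{i\in I}\calW_i$. But the statement you build the final step on is false, and the step you defer as ``the main work'' is not the right formulation. The decomposition does \emph{not} realize $D(\calW_1,\ldots,\calW_n)=N(\bW)$ as an honest count of full-dimensional mixed cells: what Proposition \ref{P:Nsum} gives is
$$D(\calW_1,\ldots,\calW_n)=A+|\mbox{Exc}_{\calS,\delta}|+N_{\delta}(\bW),$$
where $A=\sum_{Q}|\calW\cap(\delta+Q)|$ counts \emph{points} of $\calW=\calW_1+\cdots+\calW_n$ lying in the $\delta$-shifted mixed cells, not the cells themselves. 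Equality with the number of mixed cells fails in general: in Example \ref{E:excplusdefec} one has $D(\calW_1,\calW_2)=4$ while the subdivision has only two mixed polytopes. Consequently no injection of the mixed cells of $\calS$ into ``cells counted by $N(\bW)$'' can exist as you describe it, and none is needed. The correct and much easier statement is a lower bound on $A$: for $\delta$ small and generic, each shifted mixed cell $\delta+Q$ contains at least one vertex of $Q$, which is a point of $\calW$, so $|\calW\cap(\delta+Q)|\geq 1$; since for generic $\delta$ the sets $\calW\cap(\delta+Q)$ are pairwise disjoint, $A$ is at least the number of mixed cells, with no well-definedness or injectivity to check.

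The second gap is your claim that, thanks to irrationality, the boundary contributions ``are assigned to the signed terms $(-1)^{n-|I|}|\calW_I|$ and cancel.'' They do not cancel: they assemble into the term $N_{\delta}(\bW)=\sum_{I}(-1)^{n-|I|}\,|\calW_I\cap(P_I)_{\delta}|$, a genuinely alternating sum over lower parts, and proving it is nonnegative is the actual heart of the argument. The paper handles it by recursion, not cancellation: Proposition \ref{P:Nsumg} shows that each step of the iterated lower-part construction again splits off nonnegative terms of type $A'_j$ and $|\mbox{Exc}_{\calS'_j,\delta'}|$, giving the chain $N_{\delta_1}(\bW)\geq N_{\delta_1,\delta_2}(\bW)\geq\cdots\geq N_{\delta_1,\ldots,\delta_n}(\bW)$ of Corollary \ref{C:lowerdiscrete}, and the terminal term is computed by Proposition \ref{P:zero}: for the sum family the distinguished vertex $u_I=\sum_{i\in I}u_i$ always lies in $\calW_I$, so $N_{\delta_1,\ldots,\delta_n}(\bW)=(1-1)^n=0$. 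Note also that your phrase ``generic irrational translation of each configuration'' is dangerous: translating the sets $\calW_i$ themselves changes $D$ --- by Proposition \ref{P:bijective} it becomes the Kouchnirenko number $\prod_{i}(|\calW_i|-1)$, which can be strictly larger --- so only the auxiliary shift $\delta$ of the cells may be perturbed, never the supports. In short, you name the right tool, but the two facts needed to make it work (the lower bound $A\geq\#\{\mbox{mixed cells}\}$ and the nonnegativity of $N_{\delta}(\bW)$) are exactly the content of the paper's proof and are absent from, or contradicted by, your argument.
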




To prove this result we develop a technique called {\it irrational mixed decomposition} which is mainly inspired by Chapter 7 of~\cite{CLO}.
We note that this irrational decomposition trick has also been used in the unmixed case in~\cite{BS}.
Given convex polytopes $P_1,\ldots,P_r \subset \R^n$ and finite subsets $\calW_I$ of the Minkowsky sums $P_I=\sum_{i \in I} P_i$, we consider the quantity
$N(\bW)=\sum_{I \subset {\bf [}r {\bf ]} } {(-1)}^{r-|I|} \big| \calW_I \big|$. For instance, if the polytopes are $n$ lattice polytopes in $\R^n$ and $\calW_I =\Z^n \cap P_I$, then
$N(\bW)$ coincides with the classical mixed volume of $P_1,\ldots,P_n$. The discrete mixed volume $D(\calW_1,\ldots,\calW_r)$ coincides with the quantity  $N(\bW)$
associated with the family $\bW=(\calW_I)$ defined by $\calW_I=\sum_{i \in I}\calW_i$.
Our irrational mixed decomposition allows us to estimate $N(\bW)$ under some assumptions on $\bW$. In particular, we are able to show the nonnegativity of $N(\bW)$ in some important cases (see Theorem \ref{T:nonnegative}), for instance,  when  $\bW$ is defined by $\calW_I =\Z^n \cap P_I$ for any number $r$ of lattice polytopes $P_1,\ldots,P_r \subset \R^n$, or if $N(\bW)=D(\calW_1,\ldots,\calW_r)$ for any number of sets $\calW_1,\ldots,\calW_r \subset \R^n$.
\begin{thm}\label{T:mainnonnegative}
We have the following nonnegativity properties.
\begin{enumerate}
\item
For any finite subsets $\calW_1,\ldots,\calW_r$ of $\R^n$, we have $D(\calW_1,\ldots,\calW_r) \geq 0$.
\item
If $P_1,\ldots,P_r$ are any lattice polytopes in $\R^n$ then
$$\sum_{I \subset {\bf [}r {\bf ]}} (-1)^{r-|I|} 
\big|\Z^n \cap P_I\big| \geq 0.$$
\end{enumerate}
\end{thm}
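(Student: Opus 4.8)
The plan is to prove both parts simultaneously by establishing the nonnegativity of the general quantity $N(\bW) = \sum_{I \subseteq [r]} (-1)^{r-|I|}\,|\calW_I|$ through the irrational mixed decomposition, and only at the end specializing to the two families $\calW_I = \sum_{i \in I}\calW_i$ (which gives part (1)) and $\calW_I = \Z^n \cap P_I$ (which gives part (2)). The first step is to fix generic data: heights $\omega_i$ on $\calW_i$ (for the second family, on the lattice points of $P_i$) together with $r$ generic irrational translation vectors, and to use them to build a coherent mixed subdivision $\mathcal{S}$ of the Minkowski sum $P_{[r]} = P_1 + \cdots + P_r$. Each maximal cell of $\mathcal{S}$ has the form $C = F_1 + \cdots + F_r$ with $F_i$ a face of $P_i$ and $\sum_i \dim F_i = n$, and the role of the irrational translation is to guarantee that $\mathcal{S}$ is tight and that the finite points under consideration (the elements of $\calW_{[r]}$ in the first case, the points of $\Z^n \cap P_{[r]}$ in the second) lie in relative interiors of cells, never on a shared face.

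Next I would localize the count. Using the half-open decomposition of $P_{[r]}$ induced by $\mathcal{S}$ together with a generic sweep direction, every relevant point belongs to exactly one half-open cell, so $|\calW_{[r]}|$ splits as a sum of local counts indexed by the cells, and similarly each $|\calW_I|$ is expressed through the faces $F_i$ constituting those cells. The key identity to prove is that, after forming the alternating sum $\sum_I (-1)^{r-|I|}|\calW_I|$, the contributions reorganize so that only the fully mixed cells survive and each of them contributes a nonnegative amount; this telescoping is the discrete analogue of the Minkowski-linearity computation that turns \eqref{E:mixednumber} into a sum of volumes of mixed cells. As a consistency check, for part (2) with $r = n$ the procedure recovers exactly \eqref{E:mixednumber}, so the statement reduces there to the classical nonnegativity of the mixed volume, realized as the total normalized volume of the mixed cells of $\mathcal{S}$.

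The main obstacle, and the reason the irrationality is indispensable, is the control of degenerate overlaps: a priori the decomposition of a single $|\calW_I|$ over the cells interacts badly with the signs, and collisions among the points of the Minkowski sums can make the signed contribution of a cell negative. The heart of the argument is therefore to show that the generic-plus-irrational data force the inclusion–exclusion over the cells to collapse to an honest nonnegative count — equivalently, that each relevant point acquires a canonical cell and a canonical representation, so that the family of subsets $I$ for which the point is ``seen'' inside $\calW_I$ is upward closed and principal. This is precisely the step where the compatibility of $\bW$ with the face structure of the $P_I$ (namely that $\calW_I$ restricts correctly to every face of $P_I$, a property shared by the sumset family and the lattice-point family) must be invoked, and I expect verifying this compatibility cell by cell, and thereby ruling out the sign-unfavorable collisions, to be the technically demanding part of the proof.
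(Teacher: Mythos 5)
Your overall frame --- prove nonnegativity of the general alternating sum $N(\bW)$ by an irrational mixed decomposition and only at the end specialize to the sumset family and the lattice-point family --- is exactly the paper's frame, and you correctly isolate two of its key ingredients: the compatibility of the family $\bW$ with the face structure of the $P_I$ (the paper's property (S) and Lemma \ref{L:stable}), and the fact that for each point the set of $I$ that ``see'' it must be upward closed and principal, which is what legitimizes the inclusion--exclusion over the injections $\varphi_{\calS,I}$ of Lemma \ref{L:partial}. But your ``key identity'' is false: it is not true that after the telescoping ``only the fully mixed cells survive.'' The paper's Example \ref{E:disbigger} is a counterexample: for $\calW_1=\{(0,0),(1,0),(1,1),(0,1)\}$ and $\calW_2=\{(0,0),(2,0)\}$ one has $D(\calW_1,\calW_2)=3$, while the mixed cells of a pure mixed subdivision account for at most $MV(P_1,P_2)=2$; the discrepancy is carried by an \emph{excessive} point, i.e.\ a point of $\calW\cap(\delta+Q)$ lying in a non-mixed cell $Q$ but not in the image of any $\varphi_{\calS,I}$ (see Example \ref{E:exc}). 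So the discrete analogue of the mixed-cell volume formula that you want to prove simply does not hold; any correct proof must keep these extra (fortunately nonnegative) terms, as Proposition \ref{P:Nsum} does.

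Second, and more structurally, a one-step decomposition cannot close the argument. If you use honestly half-open cells covering all of $P$, the shift maps $w\mapsto w+v_{Q_I}$ are not compatible with the half-open structure: a cell $Q_I$ of $\calS_I$ whose excluded facets are determined by its position inside $P_I$ corresponds to a cell $Q=Q_I+v_{Q_I}$ sitting differently inside $P$ (for instance $Q_I$ touches the sweep-boundary of $P_I$ while $Q$ is interior), so points of $\calW_I$ on those facets are sent outside the half-open copy of $Q$ and land in neighboring cells --- again exactly the phenomenon of Example \ref{E:exc}. If instead you use translated cells $\delta+Q$, which are shift-compatible, then the decomposition only accounts for the points of $\calW_I$ lying in $\delta+P_I$, and an alternating boundary term $N_{\delta}(\bW)$ over the lower parts $(P_I)_{\delta}$ remains; moreover the $I=\emptyset$ contribution $(-1)^r$ is never absorbed by any cell-local count. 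The paper resolves both issues by a descending recursion through dimensions (Proposition \ref{P:Nsumg} and Corollary \ref{C:lowerdiscrete}), which ends in the zero-dimensional identity of Proposition \ref{P:zero}: for sufficiently generic $\delta_1,\ldots,\delta_n$ the iterated lower part of each $P_I$ is a single vertex $u_I$, and since $u_I\in\calW_I$ for both of your families, $N_{\delta_1,\ldots,\delta_n}(\bW)=(-1)^r+\sum_{\emptyset\neq I\subset[r]}(-1)^{r-|I|}=(1-1)^r=0$. This terminal computation is precisely what cancels the $(-1)^r$ and makes the chain $N(\bW)\geq N_{\delta_1}(\bW)\geq\cdots\geq N_{\delta_1,\ldots,\delta_n}(\bW)=0$ work. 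Your proposal contains neither the recursion nor this final step, and without them the argument leaves the $(-1)^r$ unaccounted for, so it cannot yield nonnegativity when $r$ is odd.
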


To our knowledge, Theorem \ref{T:mainnonnegative} is new except item (2) with $r=n$ or $r=n-1$ (see Section \ref{S:Mixedirrational}).

In the late 1970's Oleg Viro invented a powerful method to construct real algebraic varieties with prescribed topology. The Viro method can be seen as one of the root of tropical geometry.
Basically, one consider a real polynomial $f$ whose coefficients depend polynomially on a real parameter $t>0$. 
The smallest exponents of $t$ in the coefficients of $f$ define a convex subdivision of its Newton polytope.
The {\it combinatorial patchworking} arises when this subdivision is a triangulation whose set of
vertices coincides with the support of $f$. Viro patchworking Theorem asserts that for $t>0$ small enough the topological type of the hypersurface defined by $f$
in the real positive orthant can be read off the signs of the coefficients of the powers of $t$ with smallest exponents and the triangulation. In fact, one can see such a polynomial $f$
as a polynomial with coefficients in the field of real Puiseux series, and define the positive part of the corresponding tropical hypersurface taking into account of these signs. Then, the Viro patchworking Theorem can be rephrased by saying that for $t>0$ small enough the topological type of of the hypersurface defined by $f$
in the real positive orthant is homeomorphic to the positive part of the corresponding tropical hypersurface.
Similarly, one can also consider systems of $n$ polynomials in $n$ variables whose coefficients depend polynomially on $t$. The smallest exponents of $t$ in the coefficients of these polynomials define a convex mixed subdivision of the Minkowsky sum of their Newton polytopes. Assume that each individual convex subdivision is as above a triangulation whose set of vertices is the support of the corresponding polynomial. Assume furthermore
that the associated convex mixed subdivision is pure. Then, by a generalization of the Viro patchworking Theorem due to B. Sturmfels~\cite{St}, for $t>0$ small enough the number of solutions of the polynomial system
which are contained in the real positive orthant is equal to the number of nonempty mixed polytopes of the mixed subdivision. Recall that in this situation a mixed polytope is a Minkowsky sum of $n$ segments, and that the vertices of each segment correspond to a pair of monomials of one single polynomial in the system. Such a mixed polytope is called nonempty when the coefficients of each pair of monomials have oppositive signs. Fixing the individual supports $\calW_1,\ldots,\calW_n \subset \R^n$, and varying the smallest exponents of $t$ and the corresponding coefficient signs, one can look at the maximal number $T(\calW_1,\ldots,\calW_n)$ of nonempty mixed polytopes that can be obtained. From the tropical point of view, this quantity is the maximal number of nondegenerate positive solutions a tropical polynomial system with fixed supports $\calW_1,\ldots,\calW_n \subset \R^n$ can have. In 1996, I. Itenberg and M.-F. Roy~\cite{IR} made the conjecture that $T(\calW_1,\ldots,\calW_n)$ is the maximal number of nondegenerate positive solutions a real polynomial system with supports $\calW_1,\ldots,\calW_n$ can have. This was disproved in 1998  by Lee and Wang~\cite{LW}, but still the problem of finding an explicit upper bound for $T(\calW_1,\ldots,\calW_n)$ was open in general. Such an explicit bound is important since it gives the limits of the combinatorial patchworking method, which is one of the most powerful method for constructing polynomial systems with many positive solutions.
Such an explicit bound follows straightforwardly from Theorem \ref{T:Main}.
%
\begin{thm}\label{T:patch}
We have $T(\calW_1,\ldots,\calW_n) \leq D(\calW_1,\ldots,\calW_n)$. Thus
a real polynomial system with support $\calW_1,\ldots,\calW_n \subset \R^n$ and which is constructed by the combinatorial patchworking Viro method has at most
$D(\calW_1,\ldots,\calW_n)$ positive solutions.
\end{thm}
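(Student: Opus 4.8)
The plan is to deduce this directly from Theorem \ref{T:Main}, observing that $T(\calW_1,\ldots,\calW_n)$ is by definition the maximal number of \emph{nonempty} mixed polytopes, and that these form a subset of all mixed polytopes of the dual mixed subdivision. Since the number of all mixed polytopes equals the number of nondegenerate solutions of the associated tropical system, Theorem \ref{T:Main} already controls the larger quantity.

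First I would fix an arbitrary patchworking datum with supports $\calW_1,\ldots,\calW_n$, that is, a choice of smallest exponents of $t$ (inducing individual triangulations on the $\calW_i$ and a pure convex mixed subdivision of the Minkowsky sum) together with coefficient signs. I would then recall the correspondence set up before Theorem \ref{T:Main}: the nondegenerate solutions of the associated tropical polynomial system are in one-to-one correspondence with the mixed polytopes of this subdivision, and the nonempty ones — those whose defining pairs of monomials carry opposite signs — correspond exactly to the nondegenerate positive solutions. The key point is that the positive correspondence is literally the restriction of the full correspondence to the nonempty mixed polytopes, so the passage from ``nonempty'' to ``all'' is a genuine enlargement. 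Consequently, for this datum the number of nonempty mixed polytopes is at most the total number of mixed polytopes, hence at most the number of nondegenerate solutions of the tropical system.

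Next I would apply Theorem \ref{T:Main} to this tropical system, whose supports are precisely $\calW_1,\ldots,\calW_n$, to bound its number of nondegenerate solutions by $D(\calW_1,\ldots,\calW_n)$. Combining with the previous step, the number of nonempty mixed polytopes for this datum is at most $D(\calW_1,\ldots,\calW_n)$. Crucially, this bound depends only on the supports and not on the chosen exponents or signs, so taking the maximum over all patchworking data yields $T(\calW_1,\ldots,\calW_n) \leq D(\calW_1,\ldots,\calW_n)$, the first assertion. For the second assertion I would invoke Sturmfels' generalization of the Viro patchworking theorem (\cite{St}): for $t>0$ small enough the number of positive solutions of the real polynomial system equals the number of nonempty mixed polytopes of the associated mixed subdivision, which is at most $T(\calW_1,\ldots,\calW_n)$ by definition of $T$, hence at most $D(\calW_1,\ldots,\calW_n)$.

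There is essentially no obstacle here, as the entire analytic content resides in Theorem \ref{T:Main} and the present statement is a bookkeeping consequence; indeed the author flags it as following ``straightforwardly.'' The only point demanding care is to verify that the correspondence between positive solutions and nonempty mixed polytopes is truly a restriction of the correspondence between all nondegenerate solutions and all mixed polytopes, so that enlarging from nonempty to all mixed polytopes makes the inequality point in the right direction; this is exactly the setup recalled above.
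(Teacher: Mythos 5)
Your proposal is correct and is exactly the argument the paper intends: the paper gives no separate proof of Theorem \ref{T:patch}, stating only that it follows straightforwardly from Theorem \ref{T:Main}, and your filling-in (nonempty mixed polytopes form a subset of all mixed polytopes, which biject with nondegenerate tropical solutions, bounded by $D(\calW_1,\ldots,\calW_n)$; then maximize over patchworking data and invoke Sturmfels' theorem for the real statement) is precisely that straightforward deduction.
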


In fact Theorem \ref{T:patch} is true for a more general version of the Viro patchworking theorem arising when it is only assumed that the mixed subdivision is pure (this is the version obtained in ~\cite{Bi}).
We already observed that $D(\calW_1,\calW_2) \leq (|\calW_1|-1)(|\calW_2-1|)$ due to $|\calW_1+\calW_2| \leq |\calW_1| \cdot |\calW_2|$.
Similarly, if for any $I \subset [r]$ the sum $\sum_{I\in {\bf [}r {\bf ]}} \calW_i$ and the product $\prod_{I\in {\bf [}r {\bf ]}} \calW_i$ have the same number of elements,
then we immediately obtain $D(\calW_1,\ldots,\calW_r) =\prod_{i \in {\bf [}r {\bf ]}} (|\calW_i|-1)$. In particular, this shows that the discrete mixed volume and the Kouchnirenko number
coincide for sets in general position in $\R^n$. Using our irrational mixed decomposition technique, we can prove that $D(\calW_1,\ldots,\calW_r)$ cannot increase after small translations of the elements of $\calW_1,\ldots,\calW_r \subset \R^n$.

\begin{thm}\label{T:kouchbound} 
For any finite sets $\calW_1,\ldots,\calW_r \subset \R^n$, we have
$$D(\calW_1,\ldots,\calW_r) \leq \prod_{i \in {\bf [}r {\bf ]}} (|\calW_i|-1).$$
\end{thm}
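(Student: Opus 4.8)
The plan is to show that the discrete mixed volume is maximized when the supports are in general position and that this maximal value is exactly the Kouchnirenko number. The algebraic starting point is the identity
\[
\prod_{i \in [r]}\bigl(|\calW_i|-1\bigr) - D(\calW_1,\ldots,\calW_r) = \sum_{I \subset [r]} (-1)^{r-|I|}\Bigl(\prod_{i \in I}|\calW_i| - |\calW_I|\Bigr),
\]
which follows by expanding $\prod_i(|\calW_i|-1) = \sum_I (-1)^{r-|I|}\prod_{i\in I}|\calW_i|$ and subtracting the definition \eqref{E:N} of $D$. Since the summation map $\prod_{i\in I}\calW_i \to \calW_I$ is surjective, each Minkowski defect $\delta_I := \prod_{i\in I}|\calW_i| - |\calW_I|$ is nonnegative, and $\delta_\emptyset = \delta_{\{i\}} = 0$. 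In particular, when the $\calW_i$ are in general position every $\delta_I$ vanishes and $D(\calW_1,\ldots,\calW_r) = \prod_i(|\calW_i|-1)$, so the theorem is equivalent to the assertion that no configuration can exceed this general-position value.

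To exploit this, I would perturb each element $w$ of each $\calW_i$ to $w + t\,v_w$ for independent generic vectors $v_w \in \R^n$ and a small parameter $t>0$. For generic $\{v_w\}$ and all sufficiently small $t>0$, every coincidence $\sum_{i\in I}w_i = \sum_{i\in I}w_i'$ between distinct tuples is destroyed, since its persistence would impose a nonzero linear condition on the $v_w$; hence the perturbed sets $\calW_i^{(t)}$ satisfy $|\calW_I^{(t)}| = \prod_{i\in I}|\calW_i|$ and therefore $D(\calW_1^{(t)},\ldots,\calW_r^{(t)}) = \prod_i(|\calW_i|-1)$. As $t\to 0^{+}$ the family specializes to the given configuration. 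The theorem then follows from the monotonicity property announced just before Theorem \ref{T:kouchbound}: that $D$ cannot increase under specialization, that is,
\[
D(\calW_1,\ldots,\calW_r) \;\le\; D(\calW_1^{(t)},\ldots,\calW_r^{(t)}) \qquad \text{for small } t>0.
\]

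To prove this monotonicity I would invoke the irrational mixed decomposition exactly as in the proofs of Theorems \ref{T:Main} and \ref{T:mainnonnegative}: fix a generic irrational lifting, use it to produce a mixed subdivision of $\sum_i \conv(\calW_i^{(t)})$, and express $D$ as a genuine nonnegative count of the distinguished cells of this subdivision---the same count that yields the nonnegativity in Theorem \ref{T:mainnonnegative}. The key point is that this cell count is upper semicontinuous under specialization: as two elements of some $\calW_i$ are driven together while $t\to 0^{+}$, distinguished cells can only merge or disappear, never be created, so the count, and hence $D$, does not increase in the limit. Combined with the evaluation $D = \prod_i(|\calW_i|-1)$ for $t>0$, this gives the desired bound.

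The main obstacle I anticipate is making the semicontinuity rigorous. One must verify that a single irrational lifting can be chosen to remain admissible for every member of the family $\{\calW_i^{(t)}\}$ as $t\to 0^{+}$, so that one combinatorial model governs the whole family, and then control precisely how the distinguished cells of the limit configuration are obtained by degenerating those at $t>0$---confirming that each coincidence produced at $t=0$ identifies cells in a way that can only lower the count. Once this behaviour of the mixed decomposition under small translations is established, the algebraic identity of the first paragraph, the nonnegativity of the defects $\delta_I$, and the general-position evaluation are all routine, and the inequality $D(\calW_1,\ldots,\calW_r) \le \prod_{i\in[r]}(|\calW_i|-1)$ follows.
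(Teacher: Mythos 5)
Your reduction is the same as the paper's: perturb each $\calW_i$ into general position, where Proposition \ref{P:bijective} evaluates the discrete mixed volume as $\prod_{i}(|\calW_i|-1)$, and then argue that the unperturbed value is no larger. But that second step, the inequality $D(\calW_1,\ldots,\calW_r)\le D(\calW_1^{(t)},\ldots,\calW_r^{(t)})$, is the entire content of the theorem, and you leave it as an acknowledged obstacle with a sketch that would not work as stated. The quantity $D$ is not ``a nonnegative count of distinguished cells'': for the family $\calW_I=\sum_{i\in I}\calW_i$ the irrational decomposition (Propositions \ref{P:Nsum} and \ref{P:Nsumg}) writes $D$ as a sum of counts of \emph{points} --- points of $\calW$ lying in $\delta$-translated mixed cells, plus excessive points, iterated through lower-dimensional faces until Proposition \ref{P:zero} kills the terminal term. (A genuine cell count arises only for the vertex family, Proposition \ref{P:Nmixedcells}, which is a different family.) Consequently ``cells can only merge or disappear'' is not the operative mechanism; in the correct argument no cells merge or disappear at all. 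Also, no two elements of an individual $\calW_i$ are ``driven together'' as $t\to 0^{+}$: each $\calW_i^{(t)}$ stays in bijection with $\calW_i$; only points of the sums $\calW_I^{(t)}$ with $|I|\ge 2$ can collide in the limit.

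The paper's proof (Theorem \ref{T:boundsforNKouch}) resolves exactly the difficulty you flag, and not by semicontinuity of a cell count. Fix liftings $\mu_i:\calW_i\to\R$ for the original sets and transport them through the bijections $w_i\mapsto\tilde w_i$ to the perturbed sets; for small enough translations the two pure mixed subdivisions $\calS$ and $\tilde\calS$ are combinatorially isomorphic via a bijection $\rho:\calS\to\tilde\calS$ carrying mixed cells to mixed cells, so a single combinatorial model does govern both configurations --- this answers your worry about one lifting remaining admissible for the whole family. The comparison is then termwise in the decomposition: $|\calW\cap(\delta+Q)|\le|\tilde\calW\cap(\delta+\rho(Q))|$ for each mixed cell $Q$, because a point of $\calW$ admitting several representations as a sum splits into several distinct points of $\tilde\calW$ (your intuition ``merging under specialization, never creation'' is right, but at the level of points inside cells, not of cells); the excessive-point counts of $\calS$ and $\tilde\calS$ coincide; and the recursion of Proposition \ref{P:Nsumg} terminates with both sides equal to $0$ by Proposition \ref{P:zero}. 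The excessive points cannot be waved away here: Example \ref{E:excplusdefecbis} shows they are precisely why $D$ fails other monotonicity properties, so any rigorous version of your argument must control them, as the isomorphism $\rho$ does in the paper.
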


Combined with Theorem \ref{T:Main}, this gives a sharp fewnomial bound for tropical polynomial systems.

\begin{thm}\label{T:MainBis} The Kouchnirenko number $\prod_{i \in {\bf [}n {\bf ]}} (|\calW_i|-1)$ is a sharp bound
for the number of nondegenerate solutions of a system of tropical polynomial equations with supports
$\calW_1,\ldots,\calW_n \subset \R^n$.
\end{thm}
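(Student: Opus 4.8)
The statement has two halves, and the upper bound is immediate. For any tropical system with supports $\calW_1,\ldots,\calW_n$, Theorem \ref{T:Main} bounds its number of nondegenerate solutions by $D(\calW_1,\ldots,\calW_n)$, and Theorem \ref{T:kouchbound} bounds the latter by $\prod_{i\in[n]}(|\calW_i|-1)$. Chaining the two inequalities yields the Kouchnirenko bound. The only genuine content is therefore \emph{sharpness}: I must exhibit, for every prescribed vector of cardinalities $(k_1,\ldots,k_n)$, supports $\calW_i$ with $|\calW_i|=k_i$ together with a tropical system on them attaining exactly $\prod_{i\in[n]}(k_i-1)$ nondegenerate solutions.

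For sharpness the plan is to place the supports along the coordinate axes, taking $\calW_i=\{0,e_i,2e_i,\ldots,(k_i-1)e_i\}$ where $e_i$ is the $i$-th standard basis vector, so that the $i$-th tropical polynomial $f_i(x)=\max_{0\le j\le k_i-1}(a_{ij}+j x_i)$ depends only on the coordinate $x_i$. Choosing the coefficient sequence $(a_{ij})_{0\le j\le k_i-1}$ strictly concave in $j$, for instance $a_{ij}=-j^2$, forces every point $(j,a_{ij})$ to be a vertex of its upper hull, so $f_i$ has exactly $k_i-1$ corners; this is precisely the univariate tropical Descartes bound, attained. Consequently the tropical hypersurface defined by $f_i$ is a union of $k_i-1$ parallel hyperplanes normal to $e_i$.

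The common intersection of these $n$ tropical hypersurfaces is then obtained by selecting one hyperplane $\{x_i=c_{i,l_i}\}$ from each of the $n$ families. Since distinct families constrain distinct coordinates, each choice of $(l_1,\ldots,l_n)$ cuts out a single point, and there are $\prod_{i\in[n]}(k_i-1)$ such points. As the normals $e_1,\ldots,e_n$ are linearly independent, the codimensions of the pieces meeting at each point sum to $n$, so the intersection is transverse and every one of these $\prod_{i\in[n]}(k_i-1)$ solutions is nondegenerate. For these supports the Newton polytopes are coordinate segments and each sum $\calW_I=\sum_{i\in I}\calW_i$ is a grid of $\prod_{i\in I}k_i$ points, whence $D(\calW_1,\ldots,\calW_n)=\prod_{i\in[n]}(k_i-1)$; the number of solutions, the discrete mixed volume, and the Kouchnirenko number thus all coincide, confirming that the bound is attained.

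I expect no serious obstacle. The upper bound is a formal consequence of the two preceding theorems, and the lower bound is a direct construction. The only points that require care are verifying that the chosen coefficients realize all $k_i-1$ corners of each univariate factor and that the resulting coordinate hyperplanes meet transversely; both are elementary. The conceptual weight lies entirely in Theorems \ref{T:Main} and \ref{T:kouchbound}, and this result merely assembles them and then demonstrates, by the coordinate-axis example, that the combined bound cannot be improved.
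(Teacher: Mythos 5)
Your proposal is correct and follows essentially the same route as the paper: the upper bound is precisely the combination of Theorems \ref{T:Main} and \ref{T:kouchbound}, and sharpness comes from coordinate-axis supports (each variable appearing in exactly one equation), for which the tropical Descartes bound is attained and $D(\calW_1,\ldots,\calW_n)=\prod_{i\in[n]}(|\calW_i|-1)$ by Proposition \ref{P:bijective} and Corollary \ref{C:general}. Your explicit concave-coefficient construction merely makes concrete the sharpness example that the paper leaves implicit.
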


Combining Theorem \ref{T:patch} and \ref{T:kouchbound} we obtain the following remarkable result, relating the Kouchnirenko conjecture to the conjecture proposed by Itenberg and Roy.

\begin{thm}\label{T:Kouchpatch} 
Kouchnirenko conjecture is true for polynomial systems constructed by the combinatorial patchworking Viro method, or equivalently, for tropical polynomial systems.
\end{thm}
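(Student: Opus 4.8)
The plan is to derive this statement as an immediate consequence of the inequalities already established, so the proof will be a short chaining argument rather than a fresh construction. First I would recall that the Kouchnirenko conjecture predicts the bound $\prod_{i=1}^n(|\calW_i|-1)$ for the number of nondegenerate positive solutions of a real polynomial system with supports $\calW_1,\ldots,\calW_n \subset \R^n$.

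For a system produced by the combinatorial patchworking, Theorem \ref{T:patch} bounds its number of positive solutions by the discrete mixed volume $D(\calW_1,\ldots,\calW_n)$, and Theorem \ref{T:kouchbound} applied with $r=n$ gives $D(\calW_1,\ldots,\calW_n) \leq \prod_{i=1}^n(|\calW_i|-1)$. Composing the two inequalities produces exactly the Kouchnirenko bound, so the conjecture holds for every system arising from the patchworking method. For the tropical formulation I would instead start from Theorem \ref{T:Main}, which bounds the number of nondegenerate solutions of a tropical system with the same supports by $D(\calW_1,\ldots,\calW_n)$, and then apply Theorem \ref{T:kouchbound} once more to reach the identical bound.

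The equivalence of the two formulations is not a separate computation but follows from the dictionary recalled just before Theorem \ref{T:patch}: by the Sturmfels generalization of the Viro patchworking theorem, the positive solutions of a patchworking system are in bijection with the nonempty mixed polytopes of the associated mixed subdivision, and these in turn correspond to the nondegenerate positive solutions of the associated tropical polynomial system. Hence the maximal count $T(\calW_1,\ldots,\calW_n)$ is the same quantity read from either viewpoint, and the two halves of the statement express one and the same inequality.

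I do not anticipate a genuine obstacle, since all the substantive work is carried by Theorems \ref{T:Main}, \ref{T:patch} and \ref{T:kouchbound}. The only point requiring a little care is the bookkeeping: one must verify that the supports entering the patchworking construction are precisely the sets $\calW_i$ whose cardinalities appear in the Kouchnirenko number. This is guaranteed by the standing hypothesis that each individual triangulation has vertex set equal to the support of the corresponding polynomial, so that no spurious monomials inflate the product $\prod_{i=1}^n(|\calW_i|-1)$.
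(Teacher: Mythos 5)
Your argument is exactly the paper's: the theorem is stated there as an immediate consequence of combining Theorem \ref{T:patch} with Theorem \ref{T:kouchbound} (and, for the tropical formulation, Theorem \ref{T:Main} with Theorem \ref{T:kouchbound}, as in Theorem \ref{T:MainBis}), with the equivalence of the two viewpoints given by the same patchworking dictionary you invoke. Your proposal is correct and matches the intended proof.
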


The present paper is organized as follows. In Section 2 and Section 3 we recall basic facts about tropical geometry and the classical mixed volume.
Section 4 is devoted to our mixed irrational decomposition trick and contains some applications. The last section is concerned with the discrete mixed volume and fewnomial bounds for tropical polynomial systems.
  
We would like to thank Benjamin Nill for his interest in this work and useful discussions.

\section{Tropical hypersurfaces and combinatorial patchworking}

We recall basic facts about convex subdivisions and tropical hypersurfaces.
Standard references are for instance~\cite{Gathmann}, ~\cite{IMS}, ~\cite{Mtropappli},~\cite{RGST}.
We also adress the reader to~\cite{BB} for a detailed exposition with the same notations.
A polytope in $\R^n$ is the convex-hull of a finite number of points. It is called a {\it lattice polytope} if its vertices belong to $\Z^n$.
The {\it   lower part} of a polytope $P \subset \R^n$ with respect to a nonzero vector $\delta \in \R^n$ is the set $P_{\delta}$ defined as follows.
If $\dim P <n$, then $P_{\delta}=P$. If $\dim P =n$, then $P_{\delta}$ is the union of all (closed) facets of $P$ with inward normal vectors $n$ such that
$\langle n , \delta \rangle >0$.
\bigskip

\begin{figure}[htb]
\begin{center}
\includegraphics[scale=0.5]{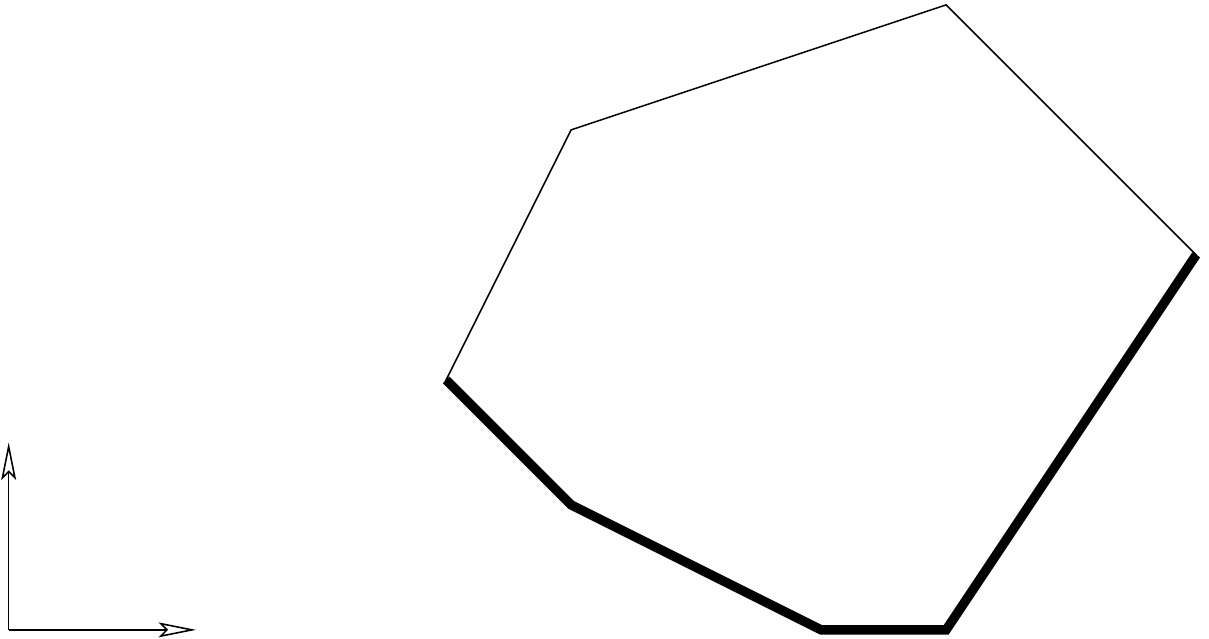}
\caption{Lower part of a polytope.}  
  \label{F:lowerpart}\end{center}
\end{figure}
\bigskip

Consider a finite set $\calW \subset \R^n$ and a map $\mu: \calW \rightarrow \R$. Let $P$ be the convex-hull of $\calW$ and
$\hat{P} \subset \R^{n+1}$ be the convex-hull of the set of points $(w,\mu(w))$ over all $w \in \calW$. The {\it lower part} of $\hat{P}$ is its lower part with respect
to the vertical vector $\delta=(0,\ldots,0,1)$. A {\it lower face} of $P$ is a face of $P$ contained in its lower part.
Projecting the lower faces of $\hat{P}$ onto $P$ via the projection  $\pi: \R^{n+1} \rightarrow \R^n$ forgetting the last coordinate,
we obtain a polyhedral subdivision of $P$ called {\it convex subdivision of $P$ associated with $\mu$}. 

%
%
%
%

\bigskip

\begin{figure}[htb]
\begin{center}
\includegraphics[scale=0.3]{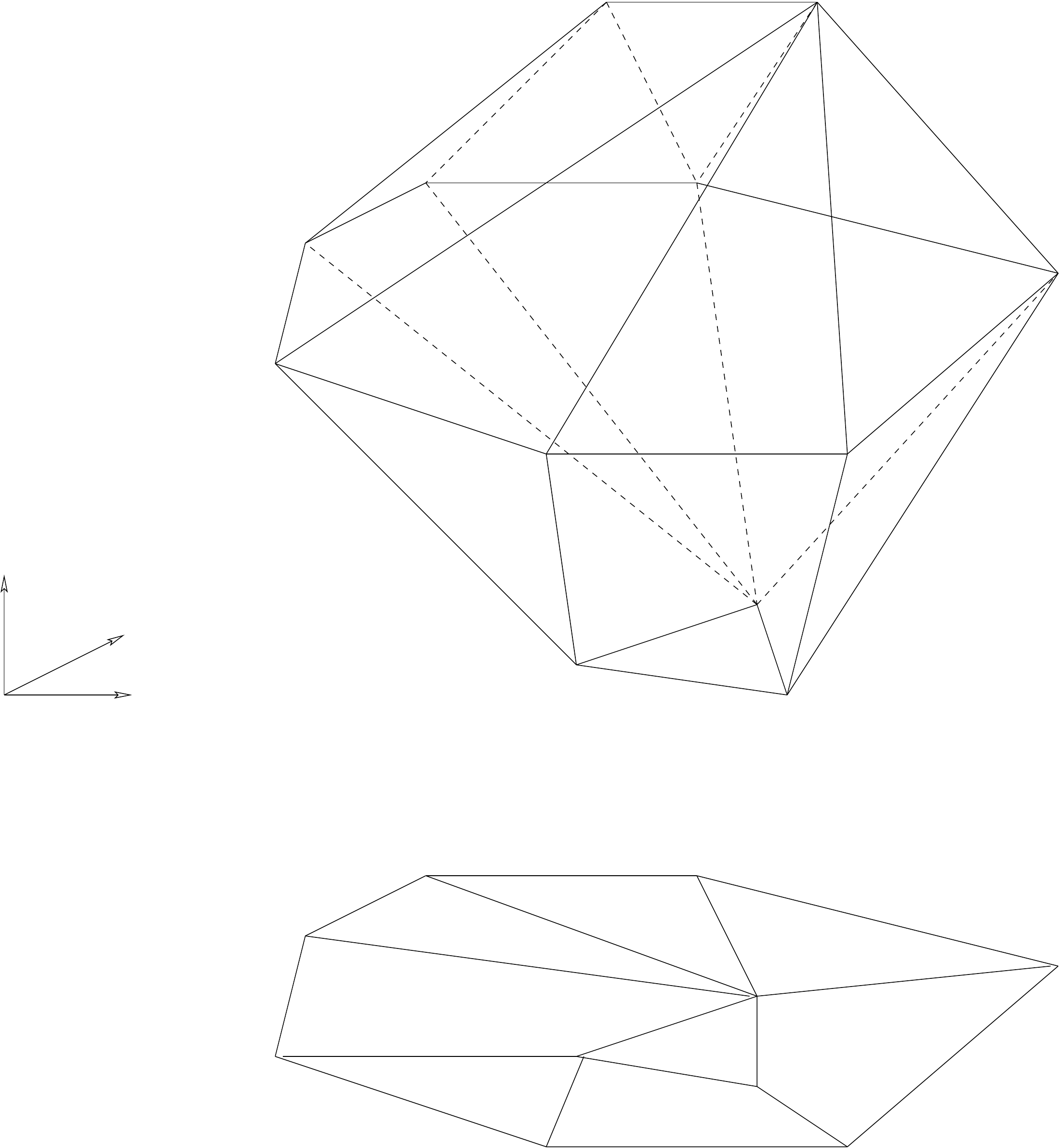}
\caption{A convex subdivision.
}  
  \label{F:lower3d}\end{center}
\end{figure}

Consider finite sets $\calW_1,\ldots,\calW_{r}$ in $\R^n$. The sum $\calW_1+\cdots+\calW_{r}$ is the set of points
$w_1+\cdots+w_{r}$ over all $(w_1,\ldots,w_r) \in  \calW_1 \times \cdots \times \calW_r$. Let $P_i$ be the convex-hull of $\calW_i$.
The Minkowsky sum $P=P_1+\cdots+P_r$ is the convex hull of $\calW=\calW_1+\cdots+\calW_{r}$.  
Consider now maps $\mu_i: \calW_i \rightarrow \R$ and denote by $\hat{P}_i$ the convex-hull of the set of points
$(w_i,\mu_i(w_i))$ over all $w_i \in \calW_i$. Let $\hat{P}$ be the Minkowsky sum
$\hat{P}_1+\cdots+\hat{P}_r$. Denote by $\calS_i$ the convex subdivision of $P_i$ associated with $\mu_i$.
Projecting the lower faces of $\hat{P}$ via $\pi$, we obtain a convex subdivision $\calS$ called {\it convex mixed subdivision} of $P$ associated with
$(\mu_1,\ldots,\mu_r)$. Any lower face $F$ of $\hat{P}$ can be uniquely written as a Minkowsky sum
of lower faces $F_1,\ldots,F_r$ of $\hat{P}_1,\ldots,\hat{P}_r$. Indeed, if $F$ is the largest face of $\hat{P}$ where the restriction to $\hat{P}$ of a linear function $\langle n , \cdot \rangle$
attains its minimum, then $F_i$ is the largest face of $\hat{P}_i$ where the same function attained its minimum on $\hat{P}_i$. Projecting onto $P$ via $\pi$, this induces a privileged writing
of any polytope $Q \in \calS$ as a Minkowsky sum $Q=Q_1+\cdots+Q_r$, where $Q_i \in \calS_i$ for $i=1,\ldots,r$. Note that such a writing is not unique in general, and we should always refer
to the previous privileged one when writing $Q=Q_1+\cdots+Q_r$ with $Q \in \calS$ and $Q_i \in \calS_i$ for $i=1,\ldots,r$.

\begin{figure}[htb]
\begin{center}
\includegraphics[scale=0.5]{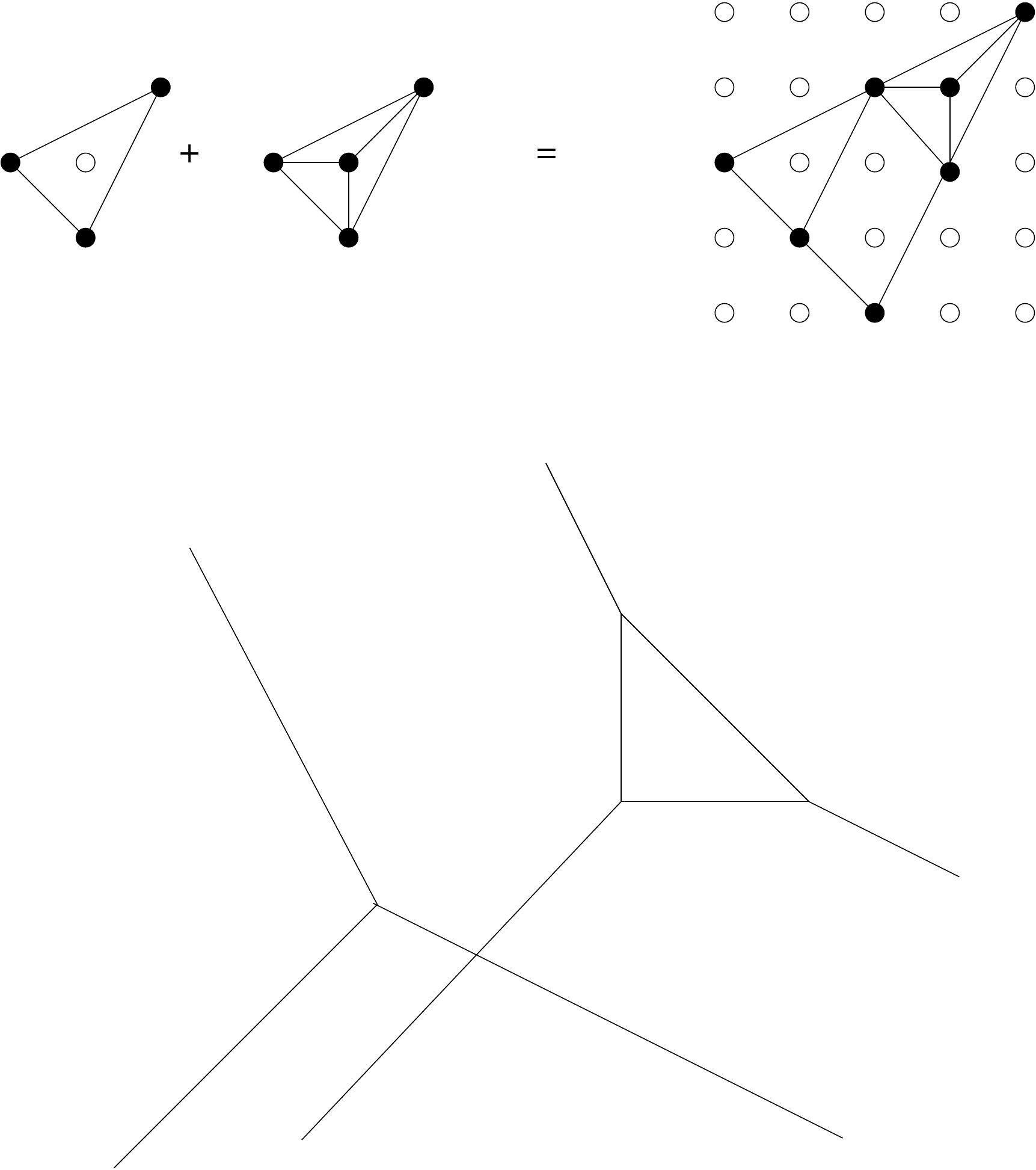}
\caption{A pure convex mixed subdivision and the corresponding arrangement of tropical curves.}  
  \label{F:convex mixed}\end{center}
\end{figure}

A tropical polynomial is a polynomial $\sum_{w \in \calW} a_wx^w$ with real coefficients, where the classical addition and multiplication are replaced with the maximum and the classical addition, respectively. Thus a tropical polynomial is the maximum of a finite number of affine-linear functions. A tropical hypersurface in $\R^n$ is the corner locus of a tropical polynomial in $n$ variables.
It is a piecewise linear $(n-1)$-manifold in $\R^n$, whose pieces together with the connected components of the complementary part form a subdivision of $\R^n$.
This subdivision is dual to the convex polyhedral subdivision $\calS$ of the convex hull $P$ of $\calW$ associated with the map $\mu: \calW \rightarrow \R$, $w \mapsto -a_w$.

This duality is a bijection which reverses the dimensions and sends a piece of the tropical hypersurface to a polytope contained in some orthogonal affine space.
Note that some monomial of a tropical polynomial may not contribute when taking the maximum of the corresponding linear functions. In fact,
the monomials which contribute correspond precisely to the vertices of the subdivision dual to the associated tropical hypersurface, so that this set of vertices can be thought of as the
support of the tropical polynomial.
%

Consider tropical hypersurfaces in $\R^n$ defined by tropical polynomials $f_1,\ldots,f_r$ with newton polytopes $P_1,\ldots,P_r$.
Each polynomial $f_i$ gives rise a to a subdivision of $\R^n$ dual to the convex subdivision $\calS_i$ of $P_i$ associated with a map $\mu_i$. The pieces of
the union of the tropical hypersurfaces defined by $f_1,\ldots,f_r$ together with the connected components of the complementary part of this union
form a subdivision of $\R^n$. This subdivision is dual in the previous sense to the convex mixed subdivision $\calS$ of the Minkowsky sum $P=P_1+\cdots+P_r$ associated
with the maps $\mu_1,\ldots,\mu_r$. As before, this duality is a bijection which reverses the dimension and sends a piece to a polytope contained in some
orthogonal affine space. If $\xi$ is a piece dual to a polytope $Q \in \calS$ such that $Q=Q_1+\cdots+Q_r$ with $Q_i \in \calS_i$ for $i=1,\ldots,r$ (this is the privileged writing described above),
then $\xi=\cap_{i=1}^r \xi_i$ where $\xi_i$ is the piece of the subdivision of $\R^n$ induced by $f_i$ which is dual to $Q_i$. Moreover, these pieces $\xi_i$ are the smallest
such that $\xi=\cap_{i=1}^r \xi_i$ (in other words $\xi$ is contained in the relative interior of each $\xi_i$).
Note that the connected components of complementary part of the union of the tropical hypersurfaces are dual to polytopes
$Q=Q_1+\cdots+Q_r \in \calS$ such that at least one $Q_i$ is a vertex of the subdivision $\calS_i$. The tropical hypersurfaces defined by $f_1,\ldots,f_r$
intersect transversely at a common intersection point $p$ if the codimensions of the linear pieces $\xi_1,\ldots,\xi_r$
of the tropical hypersurfaces which contain $p$ in their relative interior sum up to the codimension
of their common intersection. This is equivalent to $\dim Q=\dim Q_1+ \cdots+ \dim Q_r$ for the polytope $Q=Q_1+\cdots+Q_r  \in \calS$
dual do the piece $\xi$ containing $p$ in its relative interior. This is the usual notion of transversality for tropical hypersurfaces, see~\cite{BJSST07} for instance.
Therefore, transversal common intersection points of the tropical hypersurfaces defined by $f_1,\ldots,f_r$ are in bijection with the polytopes
$Q=Q_1+\cdots+Q_r \in \calS$ such that $\dim Q=\dim Q_1+ \cdots+ \dim Q_r$ and $\dim Q_i \geq 1$ for $i=1,\ldots,r$. Such a polytope $Q$ is called mixed polytope.
When $r=n$, a mixed polytope is an $n$-dimensional polytope $Q=Q_1+\cdots+Q_n \in \calS$ such that each $Q_i$ is a segment,
and thus nondegenerate solutions of a system of $n$ tropical polynomial equations in $n$ variables are in bijection with these mixed polytopes.

A mixed subdivision $\calS$ is called {\it pure} when $\dim Q=\sum_{i=1}^r \dim Q_i$ for each $Q=Q_1+\cdots+Q_r\in \calS$,
including non mixed polytopes $Q$ (for which some summand $Q_i$ is a point). This means that any common intersection of some tropical hypersurfaces
among those defined by $f_1,\ldots,f_r$ is a transversal intersection point.
%

Consider the field $\KK$ of complex Puiseux series. Denote by $Val$ the map sending a Puiseux series to minus its valuation et extend with respect to each coordinate
to a map $\KK^n \rightarrow \R^n$. For any $g =\sum_{w \in \calW} s_{w}z^{w} \in \KK[z]$, define the associated tropical polynomial $f=Trop(g)$ by $f(x)=\sum_{w \in \calW} Val(s_{w})x^{w}$. Kapranov's theorem asserts
that the image by $Val$ of the hypersurface defined by $g$ is the tropical hypersurface defined by $f=Trop(g)$. Moreover,
this tropical hypersurface is dual to the convex subdivision of the convex-hull $P$ of $\calW$ associated with the map $\mu=-Val$.
Consider now polynomials $g_1,\ldots,g_n$ in $n$ variables with coefficients in $\KK$. Set $f_i=Trop(g_i)$, let $P_i$ be the Newton polytope of $f_i$ and denote as above by $\calS_i$
the associated convex subdivision of $P_i$. Assume that the tropical hypersurfaces defined by $f_1,\ldots,f_n$ intersect transversally, which means that the dual convex mixed subdivision $\calS$ of $P=P_1+\cdots+P_n$ is pure. Assume that the system $g_1=\cdots=g_n$ has a finite number of solutions in $(\KK^*)^n$. Then, the number of such solutions counted with multiplicities  which project to a given solution of the tropical polynomial system $f_1=\cdots=f_n=0$ is equal to the euclidian volume of the dual mixed polytope.
This implies that the total number of solutions of the system $g_1=\cdots=g_n$ in $(\KK^*)^n$ is equal (when it is finite) to the mixed volume of the Newton polytopes
$P_1,\ldots,P_n$.

\begin{figure}[htb]
\begin{center}
\includegraphics[scale=0.5]{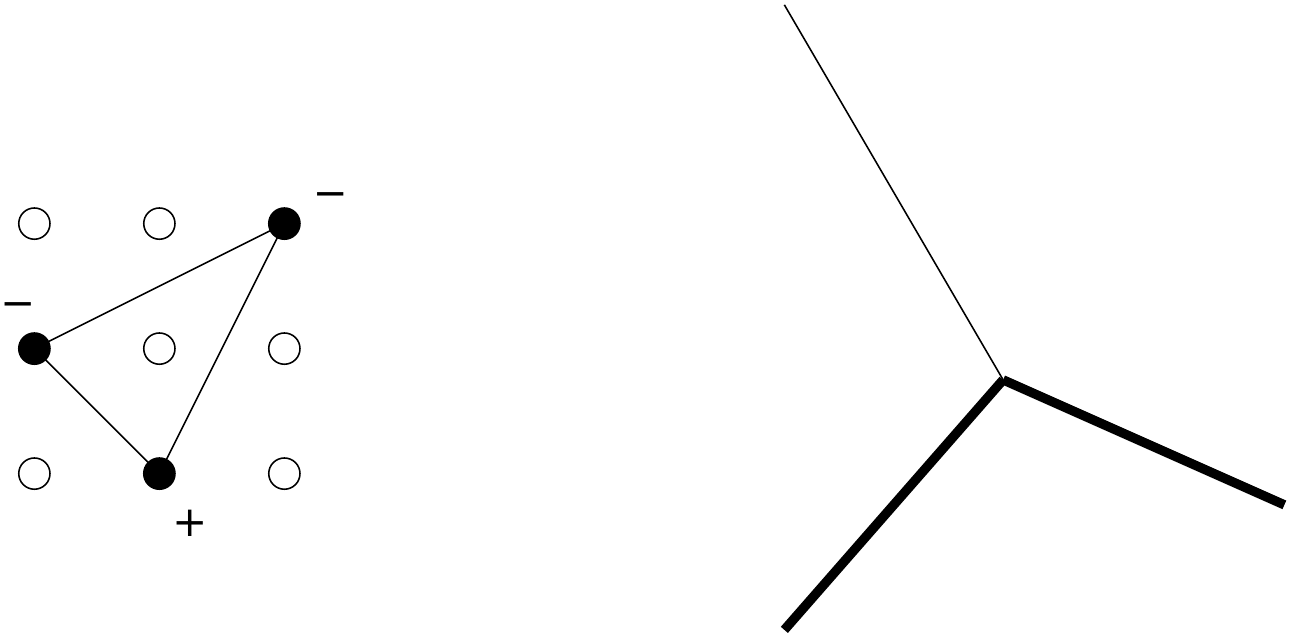}
\caption{A plane tropical curve, its positive part in bold and dual convex subdivisions with vertices equipped with signs.}  
  \label{F:individual curve1}\end{center}
\end{figure}
\bigskip

\begin{figure}[htb]
\begin{center}
\includegraphics[scale=0.5]{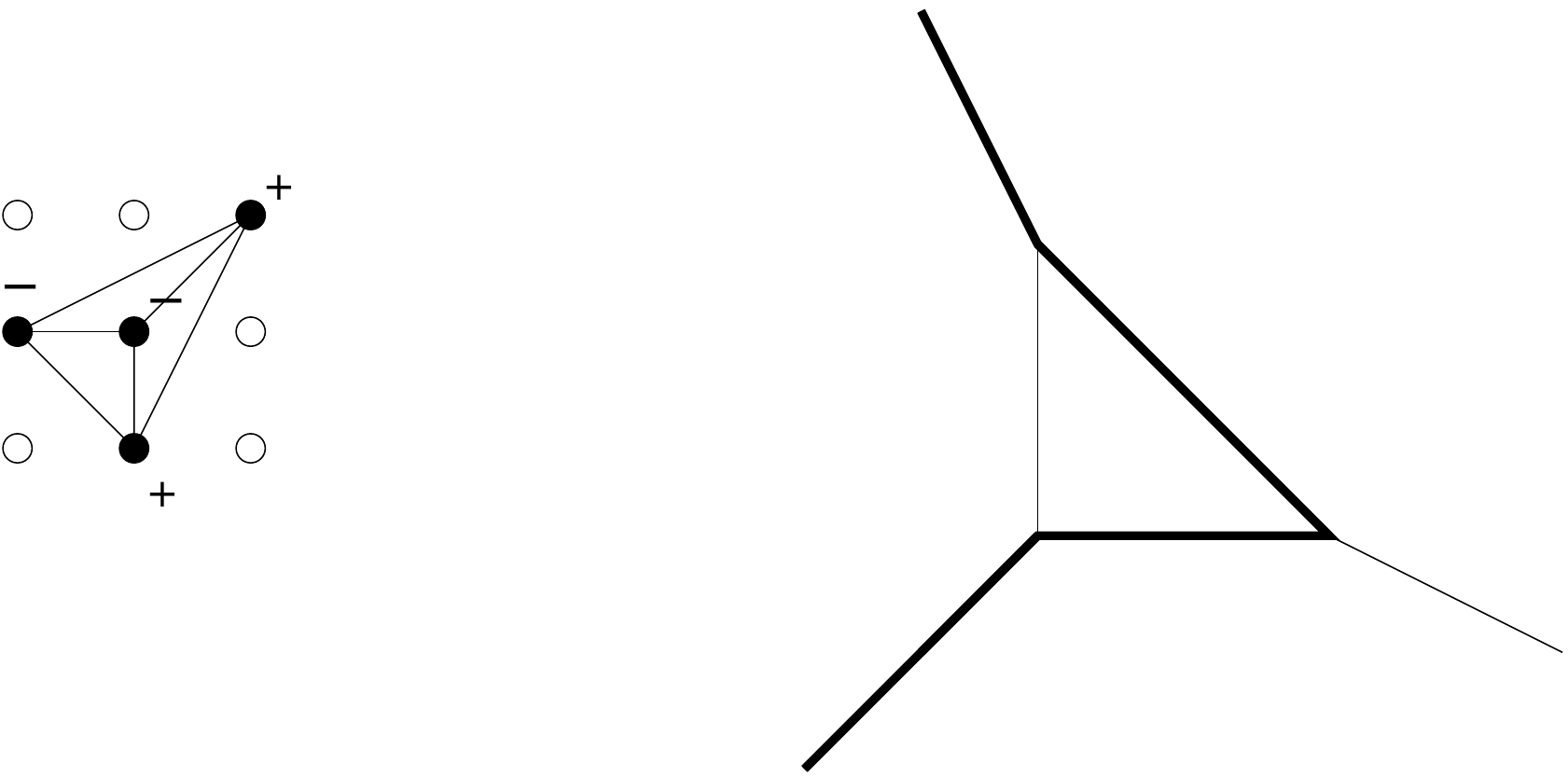}
\caption{Another plane tropical curve, its positive part in bold and dual convex subdivisions with vertices equipped with signs.}  
  \label{F:individual curve2}\end{center}
\end{figure}
\bigskip

Assume now that each polynomial $g_i$ has coefficients in the field of real Puiseux series.
Consider a vertex $w \in \calS_i$ and let $s(w)$ be the coefficient of the corresponding monomial of $g_i$.
Hence $s(w)$ is a real Puiseux series in the variable $t$, with some valuation $v$, $s(w)=at^{v}$ plus higher order terms ($v=-Val(s(w))$ with our notations).
Define the sign (plus or minus) of $w$ as the sign of the coefficient of $s(w)$ corresponding to the smallest exponent:
if $s(w)=at^{v}$ plus higher order terms, so that $Val(s(w))=-v$, then the sign of $s(w)$ is $+$ if $a>0$ and $-$ otherwise.
A segment $Q_i \in \calS_i$ is called {\it nonempty} if its enpoints have opposite signs.
Define the {\it positive part} of the tropical hypersurface defined by $f_i$ as the union of all pieces of the tropical hypersurface which are dual to nonempty segments
in $\calS_i$. Then the common intersection point of the positive parts of the tropical hypersurfaces defined by $f_1,\ldots,f_n$ are dual to the 
mixed polytopes $Q=Q_1+\cdots+Q_n \in \calS$ such that each segment $Q_i$ is non-empty. Such a mixed polytope is called {\it nonempty}
(see Figure \ref{F:nonemptymixedpolytope}).

\begin{figure}[htb]
\begin{center}
\includegraphics[scale=0.5]{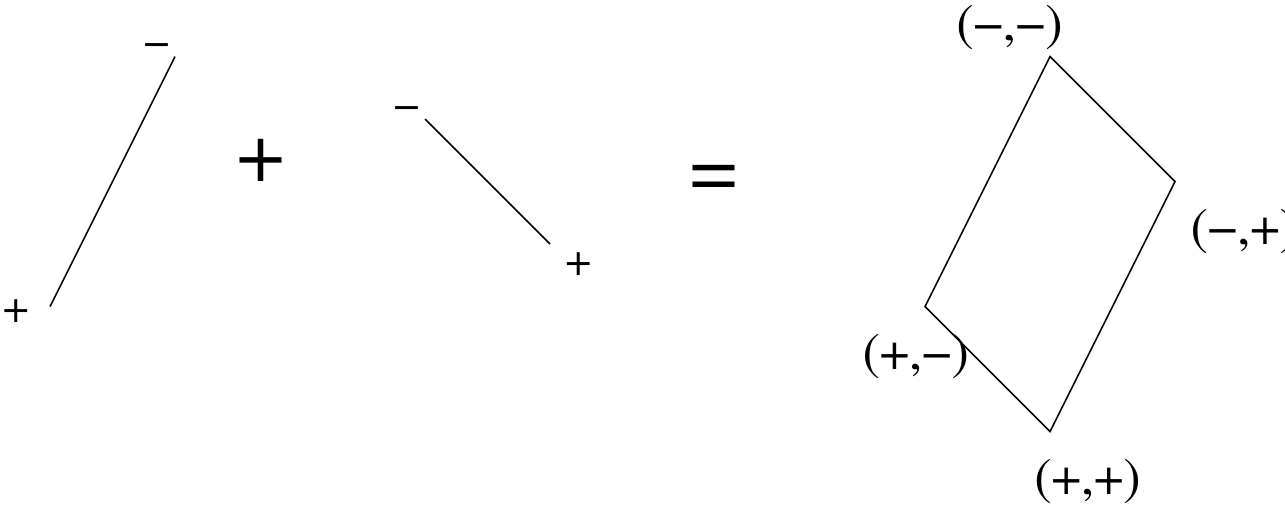}
\caption{A nonempty mixed polytope for $r=n=2$.}  
  \label{F:nonemptymixedpolytope}
\end{center}
\end{figure}
\bigskip

The combinatorial patchworking theorem for zero-dimensional varieties obtained in~\cite{St} can be described as follows. 
Consider a polynomial system $g_{1,t}(x)=\cdots=g_{n,t}(x)=0$ defined by Laurent polynomial in $n$ variables $x=(x_1,\ldots,x_n)$ of the form $g_{i,t}(x)=\sum_{w \in \calW_i}c_{i,w}t^{\mu_i(w)}x^{w}$,  where $t$ is some positive parameter and $\mu_i : \calW_i \rightarrow \R$.
We may see each polynomial $g_{i,t}$ as a polynomial with coefficient in the field of real Puiseux series in $t$, and consider the corresponding tropical polynomial system
$f_1=\cdots=f_n=0$ as above. 
Assume that each convex subvision $\calS_i$ is a triangulation with set of vertices $\calW_i$, and that the convex mixed subdivision $\calS$ is pure. Then, the combinatorial patchworking theorem
asserts that for $t>0$ small enough the number of nondegenerate positive solutions of the system $g_{1,t}(x)=\cdots=g_{n,t}(x)=0$ coincides with the number of positive solutions of the tropical polynomial system $f_1=\cdots=f_n=0$, and thus to the number of nonempty mixed polytopes of $\calS$.

\begin{figure}[htb]
\begin{center}
\includegraphics[scale=0.5]{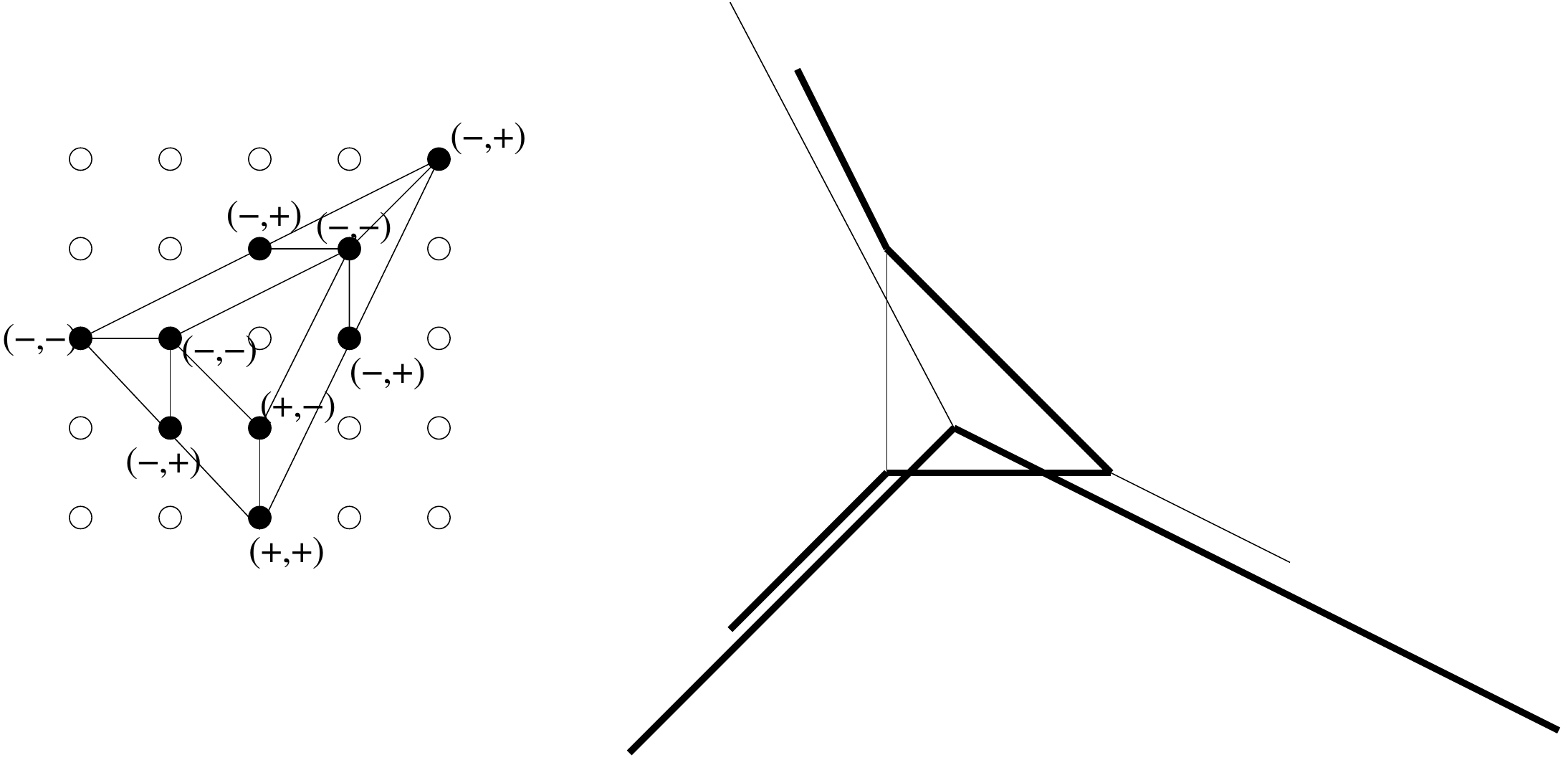}
\caption{Arrangement of two plane tropical curves, their positive parts and the dual mixed subdivision.}  
  \label{F:arrangement1}\end{center}
\end{figure}
\bigskip

\begin{figure}[htb]
\begin{center}
\includegraphics[scale=0.5]{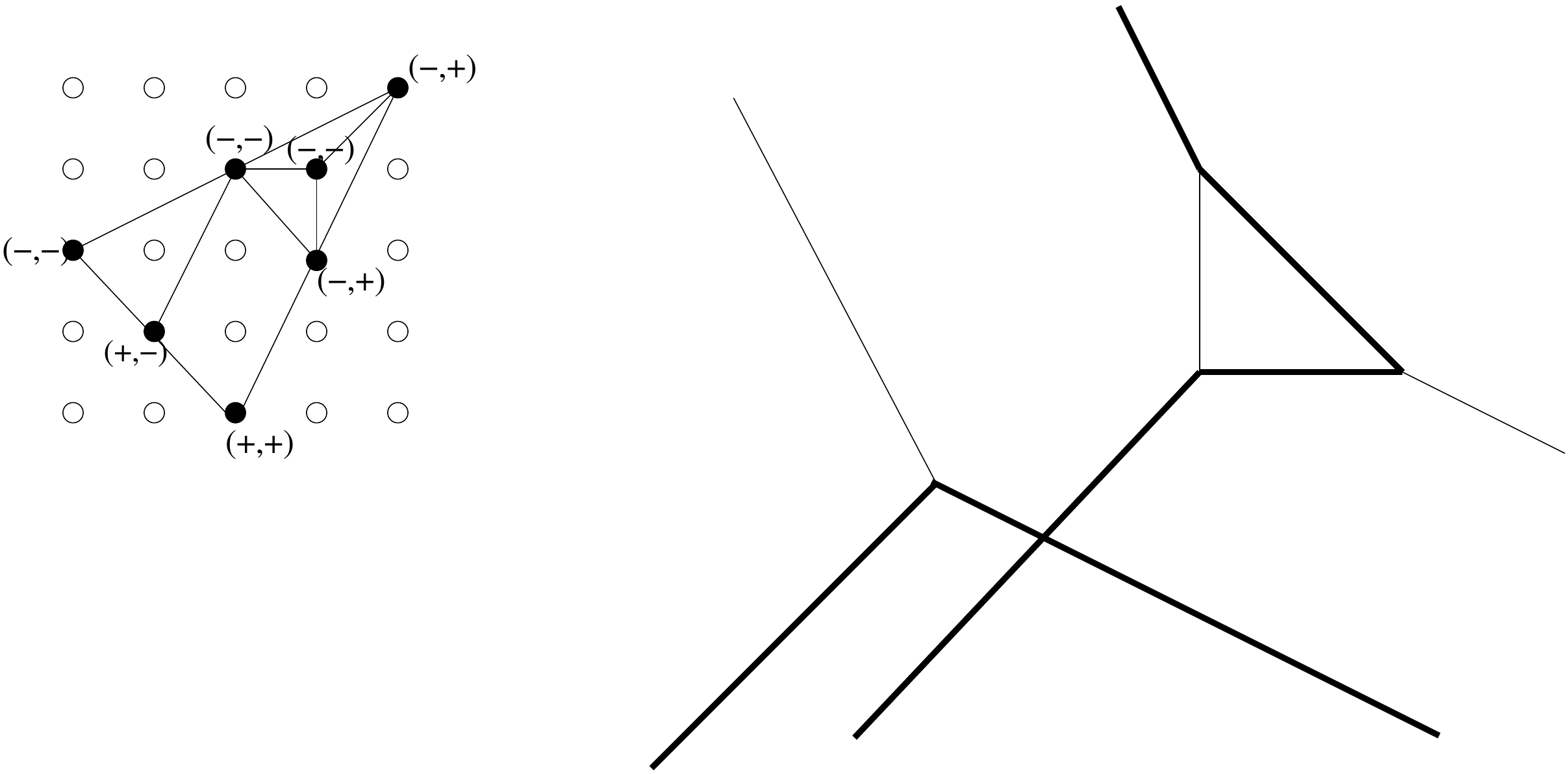}
\caption{Another arrangement of plane tropical curves together with the positive parts and the dual mixed subdivision.}  
  \label{F:arrangement2}\end{center}
\end{figure}

\medskip

As an example, consider the case $n=r=2$ given by the intersection of two plane tropical curves.
In Figures \ref{F:individual curve1} and \ref{F:individual curve2}, we have depicted two plane tropical curves, their positive parts in bold,
together with the dual individual pure convex subdivisions and vertices equipped with signs.
Figure \ref{F:arrangement1} and Figure \ref{F:arrangement2} provide examples of two different arrangements of two plane tropical curves.
In each arrangement, both tropical curves have the dual convex subdivisions with vertices equipped with signs from
Figures \ref{F:individual curve1} and \ref{F:individual curve2}.
The dual convex mixed subdivisions $\calS$ corresponding to each arrangement is given.
Moreover, each vertex $v=v_1+v_2 \in \calS$ is labelled with a pair of signs $(s_1,s_2)$,
where $s_i$ is the sign of the vertex $v_i \in \calS_i$. In Figure \ref{F:arrangement1} we get $3$ transversal intersections points, two of them are positive points, while
in Figure \ref{F:arrangement2} we get one transversal intersection point which is a positive point.

\section{Classical mixed volume}
\label{S:classical mixed volume}

We recall here basic results about the classical mixed volume which will be useful later.
Consider polytopes $P_1,\ldots,P_n$ in $\R^n$. Assume that $P=P_1+\cdots+P_n$ has dimension $n$.
For any non negative real numbers $\lambda_1,\ldots,\lambda_n$, the function
$(\lambda_1,\ldots,\lambda_n) \mapsto \mbox{Vol}_n(\lambda_1P_1+\cdots+\lambda_nP_n)$ is homogeneous of degree $n$ (without the assumption $\dim P=n$, this map is
homogeneous of degree $\dim P$). The mixed volume $\mbox{MV}(P_1,\ldots,P_n)$ is the coefficient of the monomial $\lambda_1 \cdots\lambda_n$ in this polynomial.
We have the classical result (see~\cite{CLO} for instance).

\begin{prop}\label{P:brick}
Let $\calW_1,\ldots,\calW_{n}$ be finite subsets of $\Z^n$ with convex-hulls $Q_1,\ldots,Q_n$, respectively. Assume that each $Q_i$ is a segment and that
$\dim Q= \dim Q_1+\cdots+\dim Q_n$, where $Q=Q_1+\cdots+Q_n$.
Then, for any sufficiently small vector $\delta \in \R^n$ not parallel to a face of  $Q$, we have
\begin{equation}\label{E:MVmixed polytope}
\mbox{MV}(Q_1,\ldots,Q_n)=\mbox{Vol}_n(Q)= |\Z^n \cap (\delta+Q)|.
\end{equation}
%
\end{prop}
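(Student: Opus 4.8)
The plan is to prove the two equalities separately, using that the hypothesis $\dim Q = \dim Q_1 + \cdots + \dim Q_n$ puts the segments in ``general position''. Write each segment as $Q_i = a_i + \{t v_i : 0 \le t \le 1\}$ with $a_i, v_i \in \Z^n$ (the endpoints of $Q_i$ lie in $\calW_i \subset \Z^n$). Since each $\dim Q_i = 1$ while $\dim Q = n$, the Minkowski sum being full-dimensional is equivalent to $v_1, \dots, v_n$ being linearly independent; hence $Q = a + \Pi$, where $a = \sum_i a_i \in \Z^n$ and $\Pi = \{\sum_i t_i v_i : 0 \le t_i \le 1\}$ is the parallelepiped they span.

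For the first equality I would compute the volume polynomial directly. For nonnegative $\lambda_i$ the sum $\lambda_1 Q_1 + \cdots + \lambda_n Q_n$ is a translate of the parallelepiped spanned by $\lambda_1 v_1, \dots, \lambda_n v_n$, whose volume is $\lambda_1 \cdots \lambda_n \, |\det(v_1, \dots, v_n)|$. Thus the volume polynomial is the single monomial $\lambda_1 \cdots \lambda_n \, |\det(v_1, \dots, v_n)|$, and its coefficient of $\lambda_1 \cdots \lambda_n$, which by the definition recalled above is $\mbox{MV}(Q_1, \dots, Q_n)$, equals $|\det(v_1, \dots, v_n)| = \mbox{Vol}_n(Q)$.

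For the second equality the idea is a fundamental-domain count. Set $\Lambda = \Z v_1 + \cdots + \Z v_n \subseteq \Z^n$; as the $v_i$ are independent lattice vectors, $\Lambda$ has finite index $[\Z^n : \Lambda] = |\det(v_1, \dots, v_n)| = \mbox{Vol}_n(Q)$. Passing to the basis $v_1, \dots, v_n$ turns $\Pi$ into the unit cube and turns $\Z^n$ into a lattice $M$ containing the integer lattice with $|M / \Z^n| = [\Z^n : \Lambda]$. A half-open unit cube is a fundamental domain for the integer lattice, so any translate of it meets each coset of $\Z^n$ in $M$ exactly once and therefore contains exactly $[\Z^n : \Lambda]$ points of $M$. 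It then remains to pass from this half-open cube to the closed polytope $\delta + Q$: the purpose of taking $\delta$ small and not parallel to a face of $Q$ is precisely to push the $2n$ bounding hyperplanes off every lattice point, so that $\partial(\delta + Q) \cap \Z^n = \emptyset$ and the closed count agrees with the half-open one.

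The only genuinely delicate point is this last step, and I expect it to be the main obstacle; I would make it precise in the $v$-coordinates. Writing $A = [v_1 | \cdots | v_n]$, the $i$-th coordinate $s_i(p) = (A^{-1} p)_i$ of any $p \in \Z^n$ lies in $\tfrac{1}{|\det(v_1,\dots,v_n)|}\Z$, since $A^{-1} = \tfrac{1}{\det A}\,\mathrm{adj}(A)$ has rational entries with denominator dividing $\det A$. Because $a \in \Z^n$ as well, a lattice point lands on a facet of $\delta + Q$ orthogonal (in these coordinates) to direction $i$ only when $s_i(\delta) \in \tfrac{1}{|\det(v_1,\dots,v_n)|}\Z$. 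Requiring $\delta$ not parallel to any facet of $Q$ forces $s_i(\delta) \neq 0$ for all $i$ (each such facet spans the linear span of $\{v_j : j \neq i\}$, i.e.\ the hyperplane $s_i = 0$), and requiring $\delta$ small forces $|s_i(\delta)| < \tfrac{1}{|\det(v_1,\dots,v_n)|}$; together these place each $s_i(\delta)$ strictly between $0$ and the first nonzero multiple, avoiding all bad values. Hence no lattice point lies on $\partial(\delta + Q)$, and $|\Z^n \cap (\delta + Q)| = [\Z^n : \Lambda] = \mbox{Vol}_n(Q)$, which completes the chain of equalities.
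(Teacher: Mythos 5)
Your proof is correct, and there is in fact nothing in the paper to compare it against: the paper states Proposition \ref{P:brick} as a classical fact and simply cites Chapter 7 of \cite{CLO}, giving no argument of its own. Your write-up supplies the standard self-contained proof. All the key steps check out: the endpoints of each $Q_i$ lie in $\calW_i \subset \Z^n$, so $a_i, v_i \in \Z^n$; full dimensionality of $Q$ is equivalent to linear independence of $v_1,\dots,v_n$, so $Q$ is a translate of the parallelepiped $\Pi$; the volume polynomial $\mbox{Vol}_n(\lambda_1 Q_1 + \cdots + \lambda_n Q_n) = \lambda_1\cdots\lambda_n\,|\det(v_1,\dots,v_n)|$ gives the first equality directly under the paper's definition of $\mbox{MV}$ (coefficient of $\lambda_1\cdots\lambda_n$); and the lattice-point count via the index $[\Z^n : \Lambda] = |\det(v_1,\dots,v_n)|$ together with the half-open fundamental-domain argument gives the second. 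You were also right to flag the boundary-avoidance step as the only delicate point, and your handling of it is sound: since $A^{-1}$ has entries in $\frac{1}{\det A}\Z$ and $a \in \Z^n$, a lattice point can hit a facet of $\delta + Q$ only if some $s_i(\delta) \in \frac{1}{|\det A|}\Z$, which is excluded by $s_i(\delta) \neq 0$ (non-parallelism to facets) combined with $|s_i(\delta)| < \frac{1}{|\det A|}$ (smallness). This is essentially the argument hiding behind the citation — the half-open parallelepiped decomposition used in \cite{CLO} — so your proposal fills a gap the paper deliberately left to the literature rather than diverging from an existing proof.
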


%

With the help of a convex pure mixed subdivision of $P=P_1+\cdots+P_n$, we may compute $\mbox{MV}(P_1,\ldots,P_n)$ by means of the following classical result.

\begin{prop}\label{P:mixedvolumesum}
Let $\calS$ be any convex pure mixed subdivision of $P=P_1+\cdots+P_{n} \subset \R^n$.
Then
$$
\mbox{MV}_{n}(P_1,\dotsc,P_{n})=\sum_
{Q \; \mbox{\tiny{mixed polytope of}} \; \calS}
\mbox{Vol}_{n}(Q)
$$
\end{prop}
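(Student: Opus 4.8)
The plan is to compute $\mbox{Vol}_n(\lambda_1 P_1+\cdots+\lambda_n P_n)$ as a polynomial in $\lambda_1,\ldots,\lambda_n$ by slicing it along the cells of $\calS$, and then to read off the coefficient of the monomial $\lambda_1\cdots\lambda_n$, which is by definition $\mbox{MV}(P_1,\ldots,P_n)$. The whole point is that a convex mixed subdivision interacts well both with dilation of the summands and with the purity hypothesis, and the identity will fall out of these two facts.

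First I would check that $\calS$ behaves well under dilation. Writing a cell in its privileged form $Q=Q_1+\cdots+Q_n$ with $Q_i\in\calS_i$, I claim that for positive reals $\lambda_1,\ldots,\lambda_n$ the dilated cells $\lambda_1 Q_1+\cdots+\lambda_n Q_n$ form a subdivision of $\lambda_1 P_1+\cdots+\lambda_n P_n$. This uses coherence: $\calS$ arises by projecting the lower faces of $\hat P=\hat P_1+\cdots+\hat P_n$, a lower face being the locus on which a functional $\langle(\ell,1),\cdot\rangle$ is minimized, and replacing $\hat P_i$ by $\lambda_i\hat P_i$ (scaling base and lift together) sends the minimizing face $\hat F_i$ to $\lambda_i\hat F_i$, while the Minkowski sum of the minimizing faces is the minimizing face of the sum. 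Hence the lower faces of $\lambda_1\hat P_1+\cdots+\lambda_n\hat P_n$ project exactly to the dilated cells, which therefore tile the dilated sum. Consequently, since only the $n$-dimensional cells contribute volume,
$$\mbox{Vol}_n(\lambda_1 P_1+\cdots+\lambda_n P_n)=\sum_{Q\in\calS}\mbox{Vol}_n(\lambda_1 Q_1+\cdots+\lambda_n Q_n).$$

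Next I would exploit purity to factor each full-dimensional cell. Set $d_i=\dim Q_i$; purity gives $\sum_i d_i=\dim Q=n$, and moreover the direction spaces of $Q_1,\ldots,Q_n$ are in direct sum and span $\R^n$. Choosing linear coordinates adapted to this splitting, the dilation $Q_i\mapsto\lambda_i Q_i$ scales the $i$-th block of $d_i$ coordinates by $\lambda_i$, so that
$$\mbox{Vol}_n(\lambda_1 Q_1+\cdots+\lambda_n Q_n)=\lambda_1^{d_1}\cdots\lambda_n^{d_n}\,\mbox{Vol}_n(Q).$$
Summing over cells yields $\mbox{Vol}_n(\lambda_1 P_1+\cdots+\lambda_n P_n)=\sum_{Q}\lambda_1^{d_1(Q)}\cdots\lambda_n^{d_n(Q)}\,\mbox{Vol}_n(Q)$. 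The coefficient of $\lambda_1\cdots\lambda_n$ then singles out exactly those cells with $d_i(Q)=1$ for every $i$, which are precisely the mixed polytopes, giving the claimed formula. Applied to a single cell with all $d_i=1$ this also recovers the identity $\mbox{MV}(Q_1,\ldots,Q_n)=\mbox{Vol}_n(Q)$ of Proposition \ref{P:brick}.

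The main obstacle is really the pair of structural facts in the two middle paragraphs: the compatibility of $\calS$ with dilation (where coherence is used) and the monomial factorization of cell volumes (where purity, i.e.\ transversality of the direction spaces, is used). Once the dilated cells are known to tile $\sum\lambda_i P_i$ and the summand directions are in direct sum, extracting the coefficient is purely formal. I would be careful to dispose of the degenerate cells explicitly: cells with $\dim Q<n$ contribute zero volume, while full-dimensional cells having some $d_i=0$ (a summand $Q_i$ reduced to a point) carry no factor $\lambda_i$ and hence cannot contribute to the coefficient of $\lambda_1\cdots\lambda_n$.
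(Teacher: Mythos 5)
Your proposal is correct and follows essentially the same route as the paper's proof: expand $\mbox{Vol}_n(\lambda_1P_1+\cdots+\lambda_nP_n)$ over the dilated cells $\lambda_1Q_1+\cdots+\lambda_nQ_n$ of $\calS$, use purity to write each cell's volume as $\lambda_1^{\dim Q_1}\cdots\lambda_n^{\dim Q_n}\mbox{Vol}_n(Q)$, and extract the coefficient of $\lambda_1\cdots\lambda_n$. You merely supply more detail at the two points the paper calls ``obvious'' (the coherence argument for why the dilated cells tile the dilated sum, and the direct-sum argument for the monomial factorization), which is sound.
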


A proof of Proposition \ref{P:mixedvolumesum} goes as follows. Assume that the convex pure mixed subdivision $\calS$ is associated with maps $\mu_1,\ldots,\mu_n$,
where $\mu_i$ is some real-valued map defined on a finite set $\calW_i$ with convex-hull $P_i$. Then, for any nonnegative real number $\lambda_1,\ldots,\lambda_n$,
the convex mixed subdivision of $\lambda P_1+\cdots+\lambda_n P_n$ associated with the maps $\lambda_i \calW_i \rightarrow \R$, $\lambda_i x \mapsto \lambda_i \mu_i(x)$,
consists of all polytopes $\lambda_1Q_1+\cdots+\lambda_nQ_n$ with $Q=Q_1+\cdots+Q_n \in \calS$. We obviously have
$$
\begin{array}{lll}
\mbox{Vol}_n(\lambda_1P_1+\cdots+\lambda_nP_n) & = & \sum_{Q \in \calS} \mbox{Vol}_n(\lambda_1Q_1+\cdots+\lambda_nQ_n) \\
& & \\
 & = & \sum_{Q \in \calS}\mbox{Vol}_n(Q_1+\cdots+Q_n) \cdot \lambda_1^{\dim(Q_1)}\cdots\lambda_n^{\dim(Q_n)},
\end{array}$$
the latter equality coming from the fact that $\calS$ is pure. The coefficient of $\lambda_1 \cdots \lambda_n$
in the last expression is precisely the total volume of the mixed polytopes of $\calS$.

As it is well-known, Proposition \ref{P:mixedvolumesum} and its proof can be generalized to any number of polytopes as follows.
Let $P_1,\ldots,P_r$ be polytopes in $\R^n$ such that $P=P_1+\cdots+P_r$ has dimension $n$. Let $\calS$ be a convex pure mixed subdivision of $P$.
The function $(\lambda_1, \ldots, \lambda_r) \mapsto  \mbox{Vol}_n(\lambda_1P_1+\cdots+\lambda_rP_r)$ defined for any non negative real numbers $\lambda_1,\ldots,\lambda_r$ is
homogeneous of degree $n$. The coefficient of $\lambda_1^{a_1}\cdots \lambda_r^{a_r}$ is equal to the sum $\sum \mbox{Vol}_{n}(Q)$ over all polytopes $Q=Q_1+\cdots+Q_r \in \calS$ such that
$\dim Q_i=a_i$ for $i=1,\ldots,r$. Moreover, this coefficient is equal to $\frac{1}{a_1 ! \cdots a_r !} \cdot \mbox{MV}_{n}(P_1,\ldots,P_1,\dotsc,P_r,\ldots,P_{r})$,
where $P_i$ is repeated $a_i$ times.
\medskip

We have the following alternative definition of the mixed volume.
\begin{prop}
\label{P:alter}
We have
$$
\mbox{MV}_n(P_1,\dotsc,P_n)=\sum_{ \emptyset \neq I \subset {\bf [}n {\bf ]}} {(-1)}^{n-|I|} \mbox{Vol}_n(\sum_{i \in I} P_i).
$$
\end{prop}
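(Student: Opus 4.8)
The plan is to recover the mixed volume as the coefficient of $\lambda_1\cdots\lambda_n$ in the homogeneous degree-$n$ polynomial $V(\lambda_1,\ldots,\lambda_n)=\mbox{Vol}_n(\lambda_1P_1+\cdots+\lambda_nP_n)$ by applying a suitable mixed finite-difference operator and reading off the result. The starting observation is that substituting $\lambda_i=1$ for $i\in I$ and $\lambda_i=0$ otherwise yields $V=\mbox{Vol}_n(\sum_{i\in I}P_i)$, while the empty set gives $V(0,\ldots,0)=\mbox{Vol}_n(\{0\})=0$. Thus the right-hand side of the claimed identity is precisely $\sum_{I\subseteq[n]}(-1)^{n-|I|}V(\mathbf{1}_I)$, where $\mathbf{1}_I\in\{0,1\}^n$ is the indicator vector of $I$, the term $I=\emptyset$ contributing $0$.

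First I would rewrite this alternating sum operator-theoretically. Writing $T_i$ for the shift $T_if(\lambda)=f(\lambda+e_i)$ and $\Delta_i=T_i-\mathrm{Id}$ for the forward difference in the $i$-th variable, the expansion $\prod_{i=1}^n\Delta_i=\sum_{I\subseteq[n]}(-1)^{n-|I|}\prod_{i\in I}T_i$ shows that
$$\sum_{I\subseteq[n]}(-1)^{n-|I|}V(\mathbf{1}_I)=\big(\Delta_1\cdots\Delta_nV\big)(0).$$
It then remains to check that this iterated difference of $V$ at the origin equals the coefficient of $\lambda_1\cdots\lambda_n$, which is $\mbox{MV}_n(P_1,\ldots,P_n)$ by the definition recalled above.

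Next I would verify this on a single monomial, using that the $\Delta_i$ commute and act in separate variables. Applying $\Delta_1\cdots\Delta_n$ to $\lambda_1^{a_1}\cdots\lambda_n^{a_n}$ and evaluating at $0$ factors as $\prod_i\big((\mu+1)^{a_i}-\mu^{a_i}\big)\big|_{\mu=0}$; each one-variable factor equals $1$ when $a_i\geq 1$ and equals $0$ when $a_i=0$. The degree constraint now does the essential work: since $V$ is homogeneous of degree $n$, every monomial satisfies $\sum_i a_i=n$ over $n$ variables, so either all $a_i=1$ or some $a_j=0$; in the latter case the corresponding factor vanishes. Hence only the monomial $\lambda_1\cdots\lambda_n$ survives, contributing its coefficient times $1$, which is exactly $\mbox{MV}_n(P_1,\ldots,P_n)$. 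This yields $\big(\Delta_1\cdots\Delta_nV\big)(0)=\mbox{MV}_n(P_1,\ldots,P_n)$ and completes the argument.

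The step I expect to require the most care is this last one, namely ensuring the difference operator annihilates every monomial except $\lambda_1\cdots\lambda_n$. The clean justification is the pigeonhole remark above, that homogeneity of degree $n$ in $n$ variables forces any monomial with a repeated variable to omit another, and I would state it explicitly rather than leave it implicit. Everything else is bookkeeping: the substitution identity for $V(\mathbf{1}_I)$, the operator expansion of $\prod_i(T_i-\mathrm{Id})$, and the elementary one-variable computation $\big((\mu+1)^{a}-\mu^{a}\big)\big|_{\mu=0}$.
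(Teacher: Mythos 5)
Your proof is correct, but it takes a genuinely different route from the paper's. You work directly with the definition of $\mbox{MV}_n(P_1,\ldots,P_n)$ as the coefficient of $\lambda_1\cdots\lambda_n$ in the volume polynomial $V(\lambda_1,\ldots,\lambda_n)=\mbox{Vol}_n(\lambda_1P_1+\cdots+\lambda_nP_n)$, and extract that coefficient as the mixed finite difference $\bigl(\Delta_1\cdots\Delta_nV\bigr)(0)$, the key point being that homogeneity of degree $n$ in $n$ variables forces any monomial other than $\lambda_1\cdots\lambda_n$ to omit some variable, so it is annihilated by the corresponding $\Delta_j$; all evaluation points $\mathbf{1}_I$ lie in the nonnegative orthant, so the polynomial extension issue is harmless. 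The paper instead takes a pure convex mixed subdivision $\calS$ of $P=P_1+\cdots+P_n$, starts from Proposition \ref{P:mixedvolumesum} (the mixed volume is the total volume of the mixed polytopes of $\calS$), removes the non-mixed $n$-dimensional polytopes by inclusion-exclusion over which summands $Q_i$ are points, and then uses Lemma \ref{L:keylemma} to identify $\sum_{Q\in\calS,\ \forall i\notin I,\ \dim Q_i=0}\mbox{Vol}_n(Q)$ with $\mbox{Vol}_n(P_I)$. Your argument is shorter and more elementary, granted Minkowski's theorem that $V$ is a polynomial --- a fact the paper's definition of the mixed volume already presupposes. What it does not provide, and what the paper's proof is explicitly designed to rehearse as a ``warm up'' for Proposition \ref{P:Nsum}, is a mechanism that survives when $\mbox{Vol}_n$ is replaced by a point count $|\calW_I|$: there no polynomiality in $\lambda$ is available, and it is the subdivision-plus-inclusion-exclusion scheme, not the finite-difference calculus, that generalizes to the irrational mixed decomposition central to the paper.
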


A proof of  Proposition \ref{P:alter} can be obtained using Proposition \ref{P:mixedvolumesum} and the inclusion-exclusion principle.
We present such a proof as a warm up for the proof of Proposition \ref{P:Nsum} in Section \ref{S:Mixedirrational}.
Consider any pure convex mixed subdivision $\calS$ of $P$.
We have $\mbox{Vol}_{n}(P)=\sum_ {Q \in \calS} \mbox{Vol}_{n}(Q)$,
where it suffices to take the sum over all $n$-dimensional polytopes $ Q \in \calS$. Since $\calS$ is pure, any $n$-dimensional polytope $Q =Q_1+\cdots+ Q_n \in \calS$
which is not a mixed polytope should have at least one zero-dimensional summand $Q_i$. Therefore,
$$
\mbox{MV}_{n}(P_1,\dotsc,P_{n})=\sum_{Q\; \mbox{\tiny mixed polytope of} \; \calS}  \mbox{Vol}_{n}(Q) = \mbox{Vol}_{n}(P) 
-\sum_{Q \in \calS, \; \exists i \in {\bf [}n {\bf ]} ,\; \dim Q_i=0}\mbox{Vol}_{n}(Q)
$$
and thus by inclusion-exclusion principle
$$
\mbox{MV}_{n}(P_1,\dotsc,P_{n})=
\mbox{Vol}_{n}(P)+
\sum_{I \varsubsetneq {\bf [}n {\bf ]}} (-1)^{n-|I|} \sum_{\tiny Q \in \calS, \; \forall \,  i \notin I, \; \dim Q_i=0} 
\mbox{Vol}_{n}(Q)
.$$
It remains to see that for any $I \subsetneq {\bf [}n {\bf ]}$ we have $$\sum_{\tiny Q \in \calS, \; \forall \,  i \notin I, \; \dim Q_i=0} 
\mbox{Vol}_{n}(Q)=\mbox{Vol}_{n}(\sum_{i \in I} P_i).$$

This is a consequence of the following easy result, which will be also used in Section \ref{S:Mixedirrational}.

\begin{lemma}
\label{L:keylemma}
Let $P_1,\ldots,P_r \subset \R^n$ be polytopes in $\R^n$ with Minkowsky sum $P$ of dimension $m$.
Let $\calS$ be a convex pure mixed subdivision of $P$ associated with function $\mu_1,\ldots,\mu_r$. For any nonempty $I \subset [r]$, denote
by $\calS_I$ the convex pure mixed subdivision of $P_I=\sum_{i \in I} P_i$.
For any nonempty $I \varsubsetneq [r]$ and any $m$-dimensional polytope $Q_I \in \calS_I$, there exists an unique vertex $v_{Q_I} \in \calS_{I^c}$, where $I^c=
[r] \setminus I$, such that $Q=Q_I+v_{Q_I}$ is an $m$-dimensional polytope of $\calS$. Conversely, any $m$-dimensional polytope $Q=Q_1+\cdots+Q_r \in \calS$
such that $\dim Q_i=0$ for all $i \notin I$ arises in this way.
\end{lemma}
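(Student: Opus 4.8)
The plan is to pass to the lifted polytopes and read the statement off the duality between cells of $\calS$ and inner normal directions, extracting everything from purity. Recall that $\hat{P}=\hat{P}_1+\cdots+\hat{P}_r$ and $\hat{P}_I=\sum_{i\in I}\hat{P}_i$, and that the cells of $\calS$ (resp.\ $\calS_I$) are the images under $\pi$ of the lower faces of $\hat{P}$ (resp.\ $\hat{P}_I$). For a covector $\nu=(\nu',1)$ with $\nu'\in\R^n$, let $A^{\nu}$ denote the face of a polytope $A$ on which $\langle\nu,\cdot\rangle$ is minimized; these are exactly the lower faces (positive last coordinate). The Minkowski identity $(A+B)^{\nu}=A^{\nu}+B^{\nu}$ gives $\hat{P}^{\nu}=\sum_i \hat{P}_i^{\nu}$, whose projection is the privileged writing $Q=\sum_i Q_i$ with $Q_i=\pi(\hat{P}_i^{\nu})$. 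Since $\pi$ is injective and dimension preserving on lower faces and $\hat{P}_I^{\nu}=\sum_{i\in I}\hat{P}_i^{\nu}$, the partial sum $\sum_{i\in I}Q_i$ is precisely the cell $\pi(\hat{P}_I^{\nu})\in\calS_I$; purity says $\dim Q=\sum_i\dim Q_i$ for every cell.

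For existence, given an $m$-dimensional $Q_I\in\calS_I$, I would let $\hat{Q}_I$ be the lower face of $\hat{P}_I$ with $\pi(\hat{Q}_I)=Q_I$ and choose $\nu=(\nu',1)$ in the relative interior of its inner normal cone, so that $\hat{P}_I^{\nu}=\hat{Q}_I$. Feeding the \emph{same} $\nu$ into $\hat{P}$, the cell $Q:=\pi(\hat{P}^{\nu})$ has privileged $I$-part $\pi(\hat{P}_I^{\nu})=Q_I$, hence $Q=Q_I+\pi(\hat{P}_{I^c}^{\nu})$. Now $\dim Q\le\dim P=m$, whereas purity gives $\dim Q=\dim Q_I+\dim\pi(\hat{P}_{I^c}^{\nu})=m+\dim\pi(\hat{P}_{I^c}^{\nu})$; therefore $\dim\pi(\hat{P}_{I^c}^{\nu})=0$, i.e.\ $v_{Q_I}:=\pi(\hat{P}_{I^c}^{\nu})$ is a vertex of $\calS_{I^c}$ and $Q=Q_I+v_{Q_I}$ is an $m$-dimensional cell of $\calS$. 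The converse is then read off the first paragraph: any $m$-dimensional $Q=\sum_i Q_i\in\calS$ with $\dim Q_i=0$ for all $i\notin I$ has $I$-part $Q_I=\sum_{i\in I}Q_i\in\calS_I$ of dimension $m$ (purity) and $I^c$-part $\sum_{i\notin I}Q_i$ a vertex of $\calS_{I^c}$, which is exactly the pair produced above.

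The delicate point, which I expect to be the main obstacle, is uniqueness: a priori the face $\hat{P}_{I^c}^{\nu}$ could change as $\nu$ varies among the covectors selecting $\hat{Q}_I$, producing several completing vertices. Let $C_I$ be the slice $\{$last coordinate $=1\}$ of the relative interior of the inner normal cone of $\hat{Q}_I$ in $\hat{P}_I$; it is convex, hence connected, and consists of all $\nu$ with $\hat{P}_I^{\nu}=\hat{Q}_I$. The key observation is that the purity computation of the previous paragraph applies verbatim to \emph{every} $\nu\in C_I$, forcing $\hat{P}_{I^c}^{\nu}$ to be $0$-dimensional throughout $C_I$. Thus $C_I$ meets no wall of the inner normal fan of $\hat{P}_{I^c}$, and being connected it lies in the relative interior of a single vertex cone; consequently $\pi(\hat{P}_{I^c}^{\nu})$ is constant on $C_I$, so $v_{Q_I}$ is well defined and is the unique vertex with $Q_I+v_{Q_I}$ an $m$-dimensional cell of $\calS$. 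This settles the stated bijection.
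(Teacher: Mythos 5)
Your proof is correct and takes essentially the same route as the paper's: lift to $\hat{P}_I$, select the lower face $\hat{Q}_I$ over $Q_I$ by a covector $(\nu',1)$, minimize that same covector on the complementary lifted polytopes, and let purity force the complementary summand to be a vertex, with the converse read off the privileged writing. The only difference is that you additionally spell out uniqueness via connectedness of the normal-cone slice $C_I$, a point the paper's proof leaves implicit (it only constructs $v_{Q_I}$ and declares the converse obvious), so your write-up is if anything more complete.
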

\begin{proof}
Without loss of generality, we may assume $m=n$. Any $n$-dimensional polytope $Q_I \in \calS_I$
is the image by the vertical projection of a lower $n$-dimensional facet of $\sum_{i \in I} \hat{P}_i$. Let $u \in \R^{n+1}$ such that the restriction
to $\sum_{i \in I} \hat{P}_i$ of the linear map $\langle u, \cdot \rangle$ takes its minimal value on this facet. For $i \notin I$, let $\hat{Q}_i$ be the face of
$\hat{P}_i$ where the restriction of the previous linear map takes its minimal value. Let $Q_i \in \calS_i$ be the image of $\hat{Q}_i$ under the vertical projection.
Then  $Q_I+\sum_{i \notin I} Q_i$ belongs to $\calS$. But since $\calS$ is pure and $\dim Q_I=n$, we should have $\dim Q_i=0$ for all $i \notin I$. The converse is obvious.
\end{proof}

\section{Mixed irrational decomposition}
\label{S:Mixedirrational}

Let $P_1,\ldots,P_r$ be polytopes  in $\R^n$ with Minkowsky sum $P$. Denote by $m$ the dimension of $P$.
Consider functions $\mu_i: \calV_i \rightarrow \R$, $i=1,\ldots,r$, where $\calV_i$ is a finite set with convex-hull $P_i$.
Assume that the convex mixed subdivision $\calS$ of $P$ associated with $(\mu_1,\ldots,\mu_r)$ is pure.
Consider the pure mixed subdivision $\calS_I$ of $P_I=\sum_{i \in I}P_i$ associated with $\mu_I$ with $i \in I$ and denote by $\calV_I$ its set of vertices.
We may assume for simplicity that the convex subdivision of $P_i$ associated with $\mu_i$ is $\calV_i$, so that $\calV_{\{i\}}=\calV_i$.
\smallskip

Suppose that for any nonempty $I \subset [r]$, a finite
subset $\calW_I$ of $P_I$ is given. We set $\calW=\calW_{[r]}$ and $\calW_i=\calW_{\{i\}}$ for $i=1,\ldots,r$.
By Lemma \ref{L:keylemma}, for any nonempty $I \varsubsetneq [r]$, and any $m$-dimensional polytope $Q_I \in \calS_I$, there exists an unique vertex $v_{Q_I} \in \calS_{I^c}$, where $I^c=
[r] \setminus I$, such that $Q_I+v_{Q_I}$ is an $m$-dimensional polytope of the subdivision $\calS$ of $P$.
We say that the family $\bW=(\calW_I)_{I \subset [r]}$ {\it satisfies property (S) with respect to $(\mu_1,\ldots,\mu_r)$} if
for any such $Q_I$ we have
$$
(\calW_I \cap Q_I) + v_{Q_I} \subset \calW.$$
(If $\calW_I \cap Q_I = \emptyset$, this is an empty condition.)
Recall that $\calW=\calW_{[r]}$.
We simply say that $\bW$ satisfies property $(S)$ if there are functions $\mu_1,\ldots,\mu_r$ as above such that
$\bW$ satisfies property (S) with respect to these functions.

Here are our main examples of families satisfying property (S). Note that in all these examples, but the last one, the family satisfies the stronger property
that for any nonempty $I \varsubsetneq [r]$, any $J \subset [r] \setminus I$ and any points $v_j \in \calV_j$ with $j \in J$ we have
$$\calW_I+\sum_{j \in J} v_j \subset \calW_{I \cup J}.$$

\begin{ex}\label{E:main} {\it (Families satisfying property (S).)}
\medskip

\begin{enumerate}
\item 
For $i=1,\ldots,r$, let $\calW_i$ be any finite nonempty set.
For any nonempty $I \subset [r]$, set $\calW_I= \sum_{i \in I} \calW_i$.
Then $\bW=(\calW_I)_{I \subset [r]}$ satisfies property (S) with respect to any sufficiently generic functions $\mu_i: \calW_i \rightarrow \R$.
\smallskip

\item 
For any nonempty $I \subset [r]$, set $\calW_I=\Z^n \cap P_I$. Then $\bW=(\calW_I)_{I \subset [r]}$ satisfies property (S) with respect to any sufficiently generic functions $\mu_i: \calV_i \rightarrow \R$ such that $\calV_i$ is a  set of lattice points with convex hull $P_i$.
\smallskip

\item 
For $i=1,\ldots,r$ let $\calW_i$ be any finite nonempty set with convex hull $P_i$. For any nonempty $I \subset [r]$, define $\calW_I$ as the intersection of $\sum_{i \in I} \calW_i$
with the absolute interior of $P_I$. Then $\bW=(\calW_I)_{I \subset [r]}$ satisfies property (S) with respect to functions as in item (1).
\smallskip

\item 
Assume that $P_1,\ldots,P_r$ are lattice polytopes. For any nonempty $I \subset [r]$, define $\calW_I$ as the set of lattice points contained in the absolute interior of $P_I$.
Then $\bW=(\calW_I)_{I \subset [r]}$ satisfies property (S) with respect to functions as in item (2).

\item 
The family $\bV=(\calV_I)_{I \subset [r]}$, where as above $\calV_I$ is the set of vertices of the pure convex subdivisions of $P_I$ induced by $\mu_i$ for $i \in I$ obviously satisfies property (S)
with respect to $\mu_1,\ldots,\mu_r$.
\end{enumerate}
\end{ex}

Note that taking the relative interior instead of the absolute interior in items (3) and (4) of Example \ref{E:main} produce families which do not satisfy property (S).
For instance, take for $P_1$ and $P_2$ two non parallel segments with integer vertices in $\R^2$ and for $\calW_I$ the set of integer points in the relative interior of $P_I$.
If $v$ is a vertex of $P_1$, then $v+\calW_2$ is not contained in the relative interior of $P_1+P_2$, hence this family does not satisfy property (S).
A key lemma is the following one. It follows directly from Lemma \ref{L:keylemma}.

\begin{lemma}\label{L:partial}
Assume that $\bW=(\calW_I)_{I \subset [r]}$ satisfies property (S) with respect to $(\mu_1,\ldots,\mu_r)$.
%
For any nonempty $I \varsubsetneq [r]$ and any $\delta \in \R^n$ such that $\calW_I \cap (\delta+P_I )$ is not empty,
we have an injective map
$$\varphi_{\calS,I} : \calW_I \cap (\delta+P_I ) \longrightarrow
\bigsqcup_{\tiny Q \in \calS, \;\forall \, j \notin I, \; \dim Q_j=0} \calW \cap (\delta+Q)$$
mapping a point $w \in \delta+Q_I$, where $Q_I \in\calS_I$ has dimension $m$, to the point $w+v_{Q_I}$, where $v_{Q_I} \in \calS_{I^c}$ is the vertex such that
$Q_I+v_{Q_I}$ is an $m$-polytope $Q$ of $\calS$.
%
%
%
%
%
%
\end{lemma}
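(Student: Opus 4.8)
The plan is to read the map directly off the privileged decomposition provided by Lemma~\ref{L:keylemma}, using property (S) as precisely the hypothesis that keeps the image inside $\calW$. Throughout I would take $\delta$ small and generic, as in Proposition~\ref{P:brick}. For such $\delta$ the domain $\calW_I \cap (\delta + P_I)$ is empty unless $\dim P_I = m$: if $\dim P_I < m$, a generic $\delta$ avoids the proper linear subspace parallel to $\mathrm{aff}(P_I)$, so no $w \in \calW_I \subset P_I$ can also satisfy $w - \delta \in P_I$. Hence I may assume that $\calS_I$ has $m$-dimensional cells and that there is something to prove.

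First I would pin down the codomain. By Lemma~\ref{L:keylemma} the assignment $Q_I \mapsto Q = Q_I + v_{Q_I}$ is a bijection from the $m$-dimensional cells $Q_I$ of $\calS_I$ onto the cells $Q = Q_1 + \cdots + Q_r \in \calS$ with $\dim Q_j = 0$ for every $j \notin I$, i.e.\ exactly the cells indexing the disjoint union. To define $\varphi_{\calS,I}$, take $w \in \calW_I \cap (\delta + P_I)$. Then $w - \delta$ lies in $P_I = \bigcup Q_I$, hence in some maximal cell $Q_I$; since $w \in \calW_I \subset P_I$ and $\delta$ is small, $w - \delta$ lies in the star of $w$, so I may pick $Q_I$ to contain both $w$ and $w - \delta$ (for generic $\delta$ this cell is unique, and in any case I simply fix such a choice). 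I then set $\varphi_{\calS,I}(w) = w + v_{Q_I}$, recorded in the component indexed by $Q = Q_I + v_{Q_I}$.

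To check the image lies in the codomain I would verify the two defining conditions separately. From $w - \delta \in Q_I$ I get $(w + v_{Q_I}) - \delta = (w - \delta) + v_{Q_I} \in Q_I + v_{Q_I} = Q$, so $w + v_{Q_I} \in \delta + Q$; and from $w \in \calW_I \cap Q_I$, property (S) gives $w + v_{Q_I} \in \calW$. Injectivity is then automatic from the disjoint-union tagging: if $\varphi_{\calS,I}(w) = \varphi_{\calS,I}(w')$ they share a component $Q$, the uniqueness of the privileged decomposition $Q = Q_I + v_{Q_I}$ forces the same shift $v_{Q_I}$, and cancelling it in $w + v_{Q_I} = w' + v_{Q_I}$ yields $w = w'$.

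The one genuine subtlety — the step I expect to require the most care — is the simultaneous membership $w, w - \delta \in Q_I$ in a single maximal cell. Property (S) is stated for the untranslated set $\calW_I \cap Q_I$, so I must place $w$ itself in $Q_I$ to conclude $w + v_{Q_I} \in \calW$, whereas landing in the component $\delta + Q$ forces $w - \delta \in Q_I$. Reconciling these two demands is exactly where the smallness of $\delta$ enters, through the fact that $w - \delta$ stays in the star of $w$; once this cell has been chosen, the rest of the argument is formal bookkeeping.
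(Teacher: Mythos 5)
Your proof is correct and is essentially the paper's own argument: the paper offers nothing beyond the remark that the lemma ``follows directly from Lemma~\ref{L:keylemma}'', and your write-up is exactly that derivation — read the privileged decomposition and the vertex $v_{Q_I}$ off Lemma~\ref{L:keylemma}, use property (S) to see that the image lies in $\calW$, and get injectivity from the uniqueness of the decomposition $Q=Q_I+v_{Q_I}$ together with the disjoint-union tagging. Your insistence that $\delta$ be sufficiently small (so that $w$ and $w-\delta$ lie in a common maximal cell $Q_I$ of $\calS_I$) is a genuine necessity rather than a technicality: property (S) only applies to points of $\calW_I\cap Q_I$, the statement as printed for an arbitrary $\delta$ with nonempty domain can actually fail (one can build one-dimensional examples where $w-\delta\in Q_I$ but $w\notin Q_I$ and $w+v_{Q_I}\notin\calW$), and smallness of $\delta$ is indeed assumed in every use the paper makes of this lemma (Propositions~\ref{P:Nsum} and~\ref{P:Nsumg}).
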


Later on we will speak about  $\varphi_{\calS,I}$ even if the source space $\calW_I \cap (\delta+P_I)$ is empty.
But in that case we make the convention that  the image of $\varphi_{\calS,I}$ is also empty.
We denote by $\mbox{Exc}_{\calS,I,\delta}$ the complementary part of the image of $\varphi_{\calS,I}$.
Set
$$
\mbox{Exc}_{\calS,\delta}=\bigcup_{\emptyset \neq I \varsubsetneq {\bf [}r {\bf ]}}\mbox{Exc}_{\calS,I,\delta},$$
and call {\it   excessive}  any point of $
\mbox{Exc}_{\calS,\delta}$.
\bigskip

\begin{figure}[htb]
\begin{center}
\includegraphics[scale=0.5]{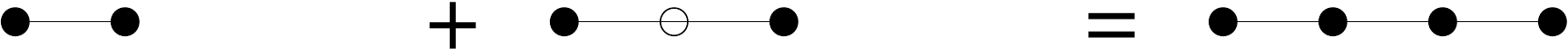}
\caption{An excessive point.}  
  \label{F:DiscExample6}\end{center}
\end{figure}

\begin{example}
\label{E:exc}
Figure \ref{F:DiscExample6} illustrates the simplest example of an excessive point. Here, $\calW_{1}=\{0,1\}$, $\calW_{2}=\{0,2\}$ and $\calW=\calW_{1}+\calW_{2}=\{0,1,2,3\}$. Consider the pure mixed subdivision of $[0,3]$ whose segments are $[0,1]+0$ and $1+[0,2]$.
Then $2 \in \calW \cap (1+[0,2])$ but $2 \notin 1+\calW_2$. Therefore, $2$ is an excessive point for this mixed subdivision and any small $\delta \in \R$.
\end{example}

%
%
%
%

Recall that the lower part $P_{\delta}$ of a polytope $P \subset \R^n$ with respect to $\delta \in \R^n$ is equal to $P$ if $\dim P <n$ or to
the union of all (closed) facets of $P$ with inward normal vectors $n$ such that
$\langle n , \delta \rangle >0$ if $\dim P=n$.
The following results are obvious.

\begin{lemma}\label{L:lower}
Let $P$ be a polytope in $\R^n$ and let $\calW$ be a subset of $P$. If $\delta \in \R^n$
is a sufficiently small vector,
then we have a disjoint union
$$
\calW =
\big(\calW \cap (\delta+P) \big)
\bigsqcup
\big(\calW \cap P_{\delta} \big)
.$$
 \end{lemma}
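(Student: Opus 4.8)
The plan is to reduce the statement to a pointwise dichotomy: for each $w \in \calW$ (which I take to be finite, as in all the applications of the lemma) and for $\delta$ small, exactly one of the two conditions $w \in \delta + P$ and $w \in P_{\delta}$ holds. Since $\calW \subset P$, establishing the equivalence $w \in P_{\delta} \Longleftrightarrow w \notin (\delta + P)$ for every $w \in \calW$ yields at once the covering $\calW = (\calW \cap (\delta+P)) \cup (\calW \cap P_{\delta})$ and the disjointness of the two pieces, which is precisely the asserted disjoint union.

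To make this concrete I would write $P = \{x \in \R^n : \langle \nu_F, x\rangle \geq c_F \text{ for every facet } F\}$, where $\nu_F$ is the inward normal to the facet $F$ and $c_F = \min_{x \in P}\langle \nu_F, x\rangle$. For $w \in P$ the condition $w \in \delta + P$ then unwinds to $\langle \nu_F, w\rangle \geq c_F + \langle \nu_F, \delta\rangle$ for all $F$; the constraints with $\langle \nu_F, \delta\rangle \leq 0$ are automatic, so only the facets with $\langle \nu_F, \delta\rangle > 0$ — which are exactly the facets making up $P_{\delta}$ — can be violated. The easy direction is immediate and needs no smallness: if $w$ lies on such a lower facet $F$, then $\langle \nu_F, w\rangle = c_F < c_F + \langle \nu_F, \delta\rangle$, so $w \notin \delta + P$.

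The content, and the only place I expect real work, is the converse, where the uniform choice of $\delta$ enters. The main obstacle is that a single $\delta$ must work simultaneously for every point of $\calW$. I would set $\eta = \min \{ \langle \nu_F, w\rangle - c_F : w \in \calW,\ F \text{ a facet with } \langle \nu_F, w\rangle > c_F\}$, a minimum over finitely many strictly positive numbers, hence $\eta > 0$ by finiteness of $\calW$. Imposing $|\langle \nu_F, \delta\rangle| < \eta$ for all $F$ (which holds once $|\delta|$ is small enough) then forces the converse: if $w \notin \delta + P$, some constraint is violated, giving a facet $F$ with $\langle \nu_F, \delta\rangle > 0$ and $0 \leq \langle \nu_F, w\rangle - c_F < \langle \nu_F, \delta\rangle < \eta$; by the definition of $\eta$ this slack cannot be a positive number, so $\langle \nu_F, w\rangle = c_F$, that is $w \in F \subset P_{\delta}$. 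In particular an interior point $w$, all of whose slacks are at least $\eta$, can never violate a constraint and so automatically lies in $\delta + P$. This closes the equivalence in the full-dimensional case.

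Finally I would dispose of the degenerate case $\dim P < n$ separately, since there $P_{\delta} = P$ by definition and the second piece is all of $\calW$; one only needs $\calW \cap (\delta + P) = \emptyset$, which holds because a small $\delta$ transverse to the affine hull of $P$ translates $P$ into a parallel, disjoint affine slice. This is the one point where $\delta$ must additionally be taken in general position, i.e.\ not parallel to $P$, whereas in the full-dimensional case smallness alone suffices.
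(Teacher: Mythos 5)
Your proof is correct. There is nothing in the paper to compare it against: the author introduces this lemma (together with Lemma \ref{L:projection}) with the phrase ``The following results are obvious'' and gives no argument, so your facet-by-facet verification supplies exactly the details the paper omits, and it is surely the intended argument — split off the degenerate case $\dim P<n$, and in the full-dimensional case compare the slacks $\langle \nu_F,w\rangle-c_F$ against the perturbations $\langle\nu_F,\delta\rangle$. Two points in your write-up are worth highlighting because they are genuine hypotheses hidden in the word ``obvious''. First, finiteness of $\calW$ (or at least a positive lower bound on its nonzero slacks) is indispensable for a \emph{uniform} small $\delta$ to exist: with $P=[0,1]$ and $\calW=\{0\}\cup\{1/k : k\geq 1\}$ no fixed $\delta>0$ works, and indeed every application in the paper has $\calW$ finite. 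Second, your observation that the case $\dim P<n$ requires $\delta$ to be non-parallel to the affine hull of $P$ is not pedantry but a real correction to the literal statement: for $P=[0,1]\times\{0\}\subset\R^2$, $\delta=(\epsilon,0)$ and $w=(1,0)$, the point $w$ lies in both $\calW\cap(\delta+P)$ and $\calW\cap P_\delta=\calW$, so smallness alone does not suffice. This genericity is consistent with how the lemma is actually invoked in the paper, where $\delta$ is always additionally assumed ``not parallel to any polytope of $\calS$'' (as in Proposition \ref{P:Nsum} and Proposition \ref{P:Nsumg}), so your reading is the right one.
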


By small vector, we mean a vector with small norm.

\begin{lemma}\label{L:projection}
Let $P$ be a polytope in $\R^n$ and let $\calW$ be a subset of $P$. For any $\delta \in \R^n$,
the quotient map $\pi: \R^n \rightarrow \R^n / \langle \delta \rangle $ induces a bijection between
$\calW \cap P_{\delta}$ and $\pi(\calW \cap P_{\delta} )$.
%
\end{lemma}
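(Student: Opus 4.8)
The plan is to reduce the statement to an injectivity claim and then settle it by a supporting-hyperplane computation. Any map is a surjection onto its own image, so the only content is that the restriction of $\pi$ to $\calW \cap P_{\delta}$ is injective; and since $\calW \cap P_{\delta} \subset P_{\delta}$ it is enough to prove the stronger fact that $\pi$ is injective on all of $P_{\delta}$. The fibers of $\pi$ are precisely the lines of direction $\delta$, so this is the same as showing that $P_{\delta}$ contains no two distinct points $x,y$ with $y-x=t\delta$ for some $t\neq 0$.

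First I would treat the main case $\dim P=n$. Suppose, for contradiction, that such $x\neq y$ exist, relabelled so that $t>0$. Being a point of $P_{\delta}$, the \emph{higher} point $y$ lies in some closed facet $F$ of $P$ whose inward normal $\nu$ satisfies $\langle \nu,\delta\rangle>0$; write the associated supporting inequality as $\langle \nu,z\rangle\ge c$ for all $z\in P$, with equality on $F$, so that $\langle \nu,y\rangle=c$. Then $\langle \nu,x\rangle=\langle \nu,y\rangle-t\langle \nu,\delta\rangle=c-t\langle \nu,\delta\rangle<c$, contradicting $x\in P$. Hence no such pair exists and $\pi|_{P_{\delta}}$ is injective. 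Equivalently, and perhaps more geometrically, $P_{\delta}$ is the graph over $\pi(P)$ of the lower envelope of $P$ in the direction $\delta$: each line of direction $\delta$ that meets $P$ meets it in a segment whose $\delta$-lowest endpoint is its unique point of $P_{\delta}$, which makes the bijectivity of $\pi\colon P_{\delta}\to\pi(P)$ transparent.

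In the remaining case $\dim P<n$ one has $P_{\delta}=P$ by definition of the lower part, and the claim reduces to injectivity of $\pi$ on $\mathrm{aff}(P)$. This holds as soon as $\delta$ is not parallel to $\mathrm{aff}(P)$, that is, when the line $\langle\delta\rangle$ meets the direction space of $\mathrm{aff}(P)$ only at the origin; then $\pi$ restricts to an injection on $\mathrm{aff}(P)\supset P$. Such transversality is automatic for the small, generic vectors $\delta$ that are used throughout the paper (compare the hypotheses of Lemma \ref{L:lower} and Proposition \ref{P:brick}), which is the regime in which this lemma is applied.

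The argument is routine, and the only point demanding care is the bookkeeping of which facet to invoke: one must use the facet containing the $\delta$-higher point $y$, since the supporting inequality yields a contradiction only at the lower point; running the same computation at $x$ produces a consistent inequality and no contradiction. Keeping track of this orientation, together with excluding the degenerate low-dimensional directions, is the whole of the difficulty.
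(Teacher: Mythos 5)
Your proof is correct. There is in fact no proof in the paper to compare it against: Lemmas \ref{L:lower} and \ref{L:projection} are introduced with the words ``The following results are obvious,'' so your supporting-hyperplane computation supplies exactly the verification the author suppressed, and you carry it out with the right orientation --- the contradiction must be extracted at the facet containing the $\delta$-higher of the two points, as you emphasize, since the inequality at the lower point is vacuous. You also correctly flag the one genuine subtlety: as literally stated (``for any $\delta \in \R^n$'') the lemma fails when $\dim P < n$, because then $P_\delta = P$ and $\pi$ collapses $P$ whenever $\delta$ is parallel to $\mathrm{aff}(P)$ (take $P$ a segment of direction $\delta$ and $\calW$ its two endpoints). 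Injectivity in that case requires $\delta$ transverse to $\mathrm{aff}(P)$, which holds for the generic vectors used everywhere the lemma is invoked (in Proposition \ref{P:Nsumg} and its iterates); note it is genericity of the \emph{direction} of $\delta$, not its smallness, that matters here, so your phrase ``small, generic'' could be sharpened to just ``generic.'' In the full-dimensional case your argument gives the conclusion for every nonzero $\delta$ with no genericity hypothesis at all, which is slightly stronger than what the paper needs.
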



\begin{lemma}
\label{L:stable}
Assume that $\bW=(\calW_I)_{I \subset [r]}$ satisfies property (S) with respect to $(\mu_1,\ldots,\mu_r)$.
Let $F=F_1+\cdots+F_r$ be a face of $P$, where $F_i$ is a face of $P_i$ for $i=1,\ldots,r$.
Then the family $\bW_F=(\calW_I \cap F_I)_{I \subset [r]}$, where $F_I=\sum_{i \in I} F_i$, satisfies property (S)
with respect to the restrictions of $\mu_1,\ldots,\mu_r$ to $F_1,\ldots, F_r$ respectively.
\end{lemma}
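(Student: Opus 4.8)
The plan is to show that restriction to the face $F$ is compatible with the whole construction, and then to reduce the containment demanded by property (S) for $\bW_F$ to the one already available for $\bW$. First I would fix a linear functional $\ell$ exposing $F$ as the face of $P$ on which $\ell$ is minimal, so that each $F_i$ is the $\ell$-minimal face of $P_i$ and $F_I=\sum_{i\in I}F_i$. Passing to the lifts, $\widehat{F}_i$ is exactly the face of $\widehat{P}_i$ exposed by $\hat\ell=(\ell,0)$, since the vertical coordinate does not affect $\hat\ell$; hence the lower faces of $\widehat{F}_I=\sum_{i\in I}\widehat{F}_i$ are precisely the lower faces of $\widehat{P}_I$ contained in $\widehat{F}_I$. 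Projecting, the mixed subdivision $\calS_I^F$ of $F_I$ induced by the restricted functions $\mu_i|_{F_i}$ is the trace $\{Q_I\in\calS_I:\ Q_I\subset F_I\}$, with privileged decompositions inherited from $\calS_I$; the same holds for $\calS$ and $F$, and all these restricted subdivisions are again pure, because the equality $\dim Q=\sum_i\dim Q_i$ is inherited cell by cell from $\calS$.

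Set $m'=\dim F$. Fix a nonempty $I\varsubsetneq[r]$ and an $m'$-dimensional cell $Q_I^F\in\calS_I^F$; Lemma \ref{L:keylemma} applied inside $F$ produces the unique vertex $v_{Q_I^F}\in\calS_{I^c}^F$ with $Q_I^F+v_{Q_I^F}$ an $m'$-dimensional cell of $\calS^F$. Since $Q_I^F\subset F_I$ we have $(\calW_I\cap F_I)\cap Q_I^F=\calW_I\cap Q_I^F$, and since $v_{Q_I^F}\in F_{I^c}$ the set $(\calW_I\cap Q_I^F)+v_{Q_I^F}$ automatically lies in $F$. Thus property (S) for $\bW_F$ reduces to the single inclusion $(\calW_I\cap Q_I^F)+v_{Q_I^F}\subset\calW$. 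I would also record that $v_{Q_I^F}$, being a vertex of the mixed subdivision $\calS_{I^c}^F$, is a sum $\sum_{j\notin I}v_j$ of vertices $v_j$ of the individual restricted subdivisions $\calS_j^F$, each lying in $\calV_j\cap F_j$.

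To obtain this inclusion I would pass back to $\calS$. Choosing $u^F$ (with positive last coordinate) exposing the lower face $\widehat{Q_I^F}$ inside $\widehat{F}_I$, the functional $u=u^F+N\hat\ell$ with $N\gg0$ exposes $\widehat{Q_I^F}$ on $\widehat{P}_I$ and, for each $j\notin I$, exposes on $\widehat{P}_j$ the lifted vertex $\widehat{v}_j$, confirming $v_{Q_I^F}=\sum_{j\notin I}v_j$. When $\dim P_I=\dim P$, each $\widehat{v}_j$ lies in the interior of a full-dimensional normal cone, so a small generic perturbation $u'$ of $u$ keeps every $\widehat{v}_j$ fixed while promoting the exposed face of $\widehat{P}_I$ to an $m$-dimensional lower face $\widehat{\tilde Q}_I\supset\widehat{Q_I^F}$. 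Projecting, $\tilde Q_I\in\calS_I$ is an $m$-dimensional cell containing $Q_I^F$, and $\tilde Q_I+v_{Q_I^F}$ is an $m$-dimensional cell of $\calS$, so by the uniqueness in Lemma \ref{L:keylemma} its companion vertex is $v_{\tilde Q_I}=v_{Q_I^F}$. Property (S) for $\bW$ applied to $\tilde Q_I$ then gives $(\calW_I\cap\tilde Q_I)+v_{Q_I^F}\subset\calW$, and since $\calW_I\cap Q_I^F\subseteq\calW_I\cap\tilde Q_I$ the desired inclusion follows.

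The step I expect to be the main obstacle is exactly this promotion of $Q_I^F$ to a full-dimensional cell while preserving the companion vertex. It works verbatim when $P_I$ is full-dimensional in $P$, but in general $\dim P_I$ may be strictly smaller than $\dim P$ even though $\dim F_I=\dim F$; then no $m$-dimensional cell of $\calS_I$ contains $Q_I^F$, and the companion $v_{Q_I^F}=\sum_{j\notin I}v_j$ cannot be produced by a single application of property (S). In that situation I would transfer $\calW_I\cap Q_I^F$ into $\calW$ by absorbing the vertices $v_j$ successively, passing through the intermediate sets $\calW_{I\cup J}$; this is the delicate point, and it is precisely here that the concrete way the family sits over sub-collections of the $P_i$ — the stronger containment property enjoyed by the families of Example \ref{E:main} — has to be exploited to close the argument.
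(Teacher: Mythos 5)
The paper gives no proof of this lemma at all (it sits in the run of statements the author treats as immediate), so your attempt has to stand on its own; unfortunately it does not, and the failure is instructive. Your reduction is fine: the restricted subdivisions are the traces of $\calS_I$ and $\calS$ on $F_I$ and $F$, purity is inherited, containment in $F$ is automatic, and everything hinges on the inclusion $(\calW_I\cap Q_I^F)+v_{Q_I^F}\subset\calW$. The genuine gap is the promotion step, and it breaks precisely in the case $\dim P_I=\dim P$ where you claim it works verbatim. A small generic perturbation of an exposing functional can only \emph{shrink} the exposed face (generically down to a vertex), never enlarge it; to promote $Q_I^F$ to an $m$-dimensional cell of $\calS_I$ you must move the functional to a vertex of the closed normal region of $\widehat{Q_I^F}$, and that move can cross a wall of the normal complex of some $\widehat{P}_j$, $j\notin I$, so the companion vertices are not preserved. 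Concretely: take $n=r=2$, $P_1=[0,1]^2$ with $\calV_1$ its four vertices, $\mu_1\equiv 0$ except $\mu_1(1,1)=1$, so $\calS_1$ consists of $T=\conv\{(0,0),(1,0),(0,1)\}$ and $T'=\conv\{(1,0),(0,1),(1,1)\}$; and $P_2=\{0\}\times[0,1]$ with $\calV_2=\{(0,0),(0,1)\}$, $\mu_2(0,0)=0$, $\mu_2(0,1)=-1/2$. Then $\calS$ is pure, with $2$-cells $T+(0,1)$, $T'+(0,1)$ and the mixed cell $[(0,0),(1,0)]+P_2$. For the face $F=F_1+F_2$ with $F_1=[0,1]\times\{0\}$ and $F_2=\{(0,0)\}$, the cell $F_1\in\calS_1^F$ has face companion $(0,0)$, while its unique promotion, the triangle $T$, has companion $(0,1)$: along the segment of functionals $(0,u_2)$ joining the region dual to $F_1+(0,0)$ (where $u_2>1/2$) to the point $(0,0)$ dual to $T$, one crosses the wall $u_2=1/2$ where the face of $\widehat{P}_2$ exposed jumps from $(0,0)$ to $(0,1)$.

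This is more than a defect of your proof: in this example the family $\calW_1=\{(1/2,0)\}$, $\calW_2=\{(0,0)\}$, $\calW=\{(1/2,1)\}$ satisfies property (S) with respect to $(\mu_1,\mu_2)$ --- its only nonvacuous conditions are $(\calW_1\cap T)+(0,1)\subset\calW$ and $(\calW_1\cap T')+(0,1)\subset\calW$, the conditions for $I=\{2\}$ being empty since $\calS_2$ has no $2$-cells --- yet $\bW_F$ violates property (S), which would force $(1/2,0)\in\calW\cap F=\emptyset$. So the lemma as stated is false for arbitrary families satisfying property (S), and no argument can close the hole, neither in your ``safe'' case $\dim P_I=m$ nor in the case $\dim P_I<m$ you flagged. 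Your closing paragraph is exactly the right diagnosis: what makes the statement true for every family the paper actually uses is the stronger containment property displayed just before Example \ref{E:main}, namely $\calW_I+\sum_{j\in J}v_j\subset\calW_{I\cup J}$ for all vertices $v_j\in\calV_j$. That property is inherited by $\bW_F$ (the $v_j$ composing $v_{Q_I^F}$ are vertices of $\calS_j$ lying in $F_j$) and yields the required inclusion in one line, while the vertex family of item (5), which lacks the stronger property, satisfies the conclusion directly because the points of $(\calV_I\cap Q_I^F)+v_{Q_I^F}$ are vertices of the cell $Q_I^F+v_{Q_I^F}\in\calS$. The honest repair is therefore to add the stronger property (or membership in Example \ref{E:main}) to the hypotheses of the lemma; under the hypotheses as written, the statement itself, not merely your argument, fails.
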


Given a family $(\calW_I)_{I \subset [r]}$, we consider the alternating sum
$$
N(\bW)=\sum_{I \subset [r]} {(-1)}^{r-|I|} |\calW_I|,$$
where the sum is over all subsets of $[r]$ including the emptyset, with the convention that
$|\calW_{\emptyset}|=1$.
For $\delta \in \R^n$, we also consider the quantity
$$
N_{\delta}(\bW)=\sum_{I \subset [r]} {(-1)}^{r-|I|} |\calW_I \cap (P_I)_{\delta}|,$$
with the convention that $|\calW_{\emptyset}\cap (P_{\emptyset})_{\delta}|=1$.

\begin{example}
\label{E:mainsum}
({\it Alternating sums $N(\bW)$ associated with families from Example \ref{E:main}.})
\medskip

\begin{itemize}
\item Consider the family in item (2). If $r=n$ then we get
$$N(\bW)=\sum_{I \subset [n]} {(-1)}^{r-|I|} |\Z^n \cap P_I|,$$
which as it is well-known coincides with the mixed volume $MV(P_1,\ldots,P_n)$.
\smallskip

\item The same conclusion holds true for the family in item (4) provided that $r=n$ and $\dim P_i=n$ for $i=1,\ldots,n$. 
\smallskip

\item Consider again the family in item (4). If $\dim P_i=n$ for $i=1,\ldots,n$ and $r \leq n$, then
$N(\bW)$ coincides with the genus of a generic complex complete intersection defined by polynomials with Newton polytopes
$P_1,\ldots,P_r$ in the projective toric variety associated with $P=P_1,\ldots,P_r$ (see~\cite{Kho1}).
\end{itemize}
\end{example}

Later on we will provide an interpretation of the sum $N(\bW)$ associated with the family in item (5) of Example \ref{E:main}
(see Proposition \ref{P:Nmixedcells}). Our central example is the alternating sum $N(\bW)$ associated with the family in item (1) of Example \ref{E:main}.
We call this sum {\it discrete mixed volume} of $\calW_1,\ldots,\calW_r)$ and denote it by
$D(\calW_1,\ldots,\calW_r)$. Thus,
$$
D(\calW_1,\ldots,\calW_r)=\sum_{I \subset [r]} {(-1)}^{r-|I|} |\sum_{i \in I} \calW_i|.$$
Section \ref{S:discrete} will be entirely devoted to this discrete mixed volume.

%
%

\begin{figure}[htb]
\begin{center}
\includegraphics[scale=0.5]{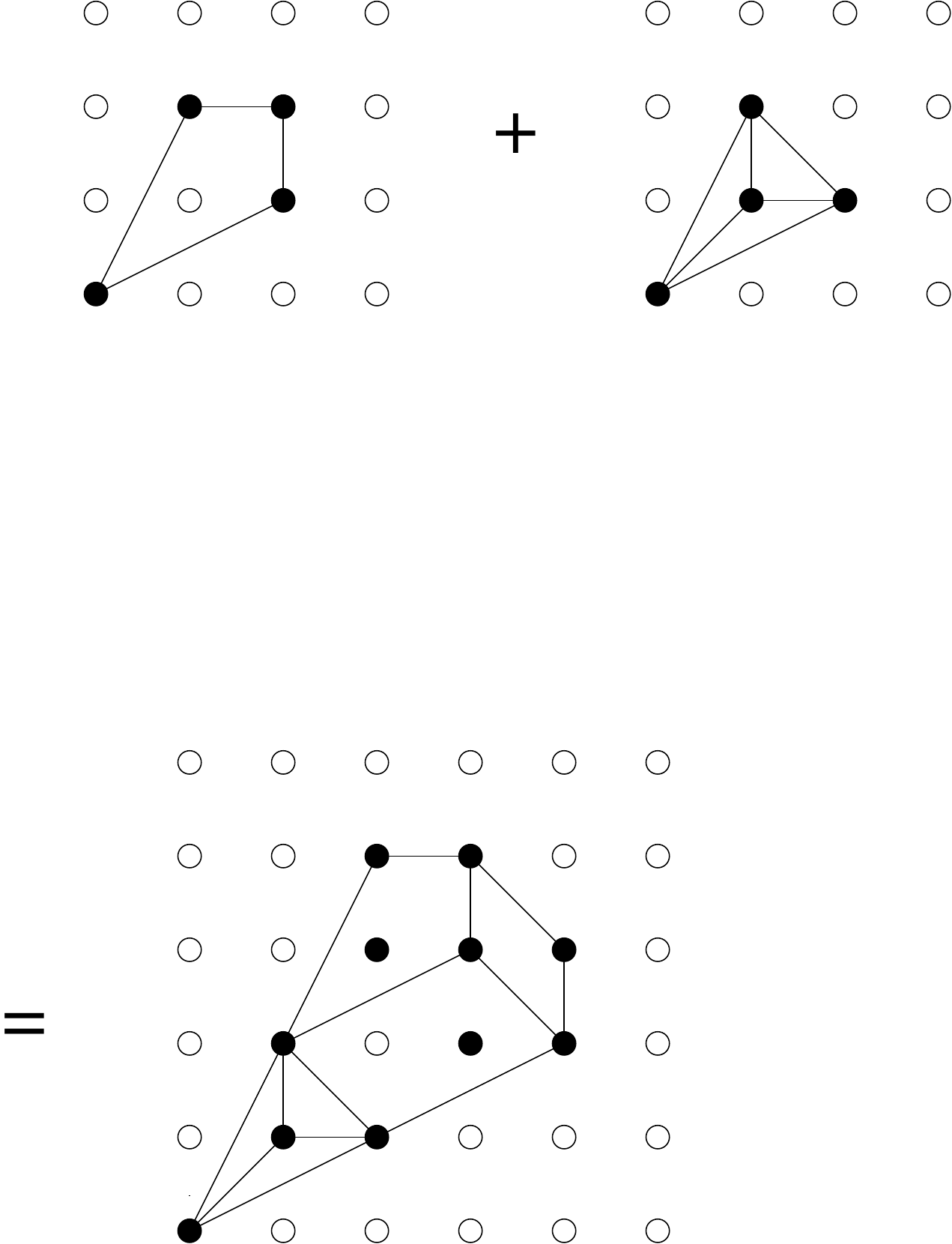}
\caption{ }  
\label{DiscExc1}\end{center}
\end{figure}

\begin{prop}\label{P:Nsum}

Assume that $\bW=(\calW_I)_{I \subset [r]}$ satisfies property (S) with respect to $(\mu_1,\ldots,\mu_r)$.
If $\delta$ is a sufficiently small vector of $\R^n$ which is not parallel to any polytope of $\calS$, then
\begin{equation}\label{E:Nsumbis}
N(\bW)=A+|\mbox{Exc}_{\calS,\delta}|+N_{\delta}(\bW)
\end{equation}
where $A$ is the sum $\sum_{Q \in \calS, \, \dim(Q_i) \geq 1} |\calW \cap (\delta+Q)|$
over all polytopes $Q=Q_1+\cdots+Q_r \in \calS$ such that  $\dim(Q_i) \geq 1$ for $i=1,\ldots,r$ (in other words, over all mixed polytopes $Q \in \calS$).
\end{prop}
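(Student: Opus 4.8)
The plan is to imitate, at the level of counting points rather than measuring volumes, the warm-up proof given for Proposition~\ref{P:alter}; the new feature is that the maps furnished by Lemma~\ref{L:partial} are merely injective, so their cokernels --- the excessive points --- must be tracked.

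First I would fix a sufficiently small $\delta$ not parallel to any polytope of $\calS$ and apply Lemma~\ref{L:lower} to each pair $(\calW_I,P_I)$, writing $|\calW_I|=|\calW_I\cap(\delta+P_I)|+|\calW_I\cap(P_I)_\delta|$ (the $I=\emptyset$ term being governed by the stated conventions). Summing against the signs $(-1)^{r-|I|}$ peels off $N_\delta(\bW)$ exactly, so that the proposition reduces to the identity
\[
\sum_{I\subseteq[r]}(-1)^{r-|I|}\,\bigl|\calW_I\cap(\delta+P_I)\bigr|\;=\;A+|\mathrm{Exc}_{\calS,\delta}|.
\]

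Next I would treat the top term $I=[r]$. Because $\delta$ is small and generic, no point of $\calW$ lands on a wall after the shift, so $\calW\cap(\delta+P)$ is the disjoint union of the sets $\calW\cap(\delta+Q)$ over the $m$-dimensional cells $Q=Q_1+\cdots+Q_r$ of $\calS$. Splitting these cells into mixed ones (all $\dim Q_i\ge 1$), which contribute precisely $A$, and non-mixed ones, and then running the inclusion--exclusion over the set of indices $i$ with $\dim Q_i=0$ exactly as in the proof of Proposition~\ref{P:alter}, I would express the whole alternating sum in terms of the quantities $\sum_{Q:\,\forall j\notin I,\ \dim Q_j=0}|\calW\cap(\delta+Q)|$. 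For each nonempty $I\subsetneq[r]$ this quantity is the cardinality of the target of $\varphi_{\calS,I}$ from Lemma~\ref{L:partial}, whereas the term $|\calW_I\cap(\delta+P_I)|$ is the cardinality of its source. In the volume computation of Proposition~\ref{P:alter} the analogue of $\varphi_{\calS,I}$ (built from Lemma~\ref{L:keylemma}) is a bijection and the two match; here the substitution $|\mathrm{source}|=|\mathrm{target}|-|\mathrm{Exc}_{\calS,I,\delta}|$ introduces the excessive counts.

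The main obstacle is the final bookkeeping of these excessive counts. Once the target cardinalities telescope --- by the same binomial cancellation that produces $\mbox{Vol}_n(\sum_{i\in I}P_i)$ in Proposition~\ref{P:alter} --- down to the single surviving term $A$, one is left with a signed combination of the numbers $|\mathrm{Exc}_{\calS,I,\delta}|$, and the heart of the argument is to show that this combination equals the cardinality of the union $\mathrm{Exc}_{\calS,\delta}=\bigcup_{\emptyset\neq I\subsetneq[r]}\mathrm{Exc}_{\calS,I,\delta}$. I would carry this out pointwise: using property~(S) to characterize exactly when a point $p$ of a non-mixed cell fails to lie in the image of $\varphi_{\calS,I}$ (namely, when $p$ minus the vertex summands of its cell indexed by $[r]\setminus I$ is not in $\calW_I$), and using the genericity of $\delta$ to keep every relevant point in the relative interior of a top-dimensional cell of each $\calS_I$, I would verify that the family of index sets for which a given excessive point is excessive contributes net weight $+1$. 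Controlling this overlap structure --- which the parallel volume argument never sees, since there the maps are bijective --- is the delicate step; I expect Lemma~\ref{L:stable}, which lets one restrict the entire configuration to a face of $P$ without leaving the class of families satisfying~(S), to be the tool needed to organize it, presumably by an induction on the dimension $m$ that mirrors the role of Lemma~\ref{L:projection} in reducing $N_\delta(\bW)$.
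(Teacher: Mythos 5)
Your reduction is correct as far as it goes, and it is actually \emph{more} careful than the paper's own argument, which follows the same strategy: peel off $N_{\delta}(\bW)$ with Lemma \ref{L:lower}, decompose $\calW\cap(\delta+P)$ over the top-dimensional cells of $\calS$, run the inclusion--exclusion of Proposition \ref{P:alter} on the target cardinalities of the maps $\varphi_{\calS,I}$, and substitute $|\mathrm{source}|=|\mathrm{target}|-|\mbox{Exc}_{\calS,I,\delta}|$ using injectivity. Carried out exactly as you describe, this yields the identity
\[
N(\bW)-N_{\delta}(\bW)\;=\;A\;-\sum_{\emptyset\neq I\varsubsetneq [r]}(-1)^{r-|I|}\,\bigl|\mbox{Exc}_{\calS,I,\delta}\bigr|,
\]
which is unconditionally true. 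The gap is precisely the step you flag as the heart of the argument: the claim that each excessive point contributes net weight $+1$, so that this signed combination equals $|\mbox{Exc}_{\calS,\delta}|$. That claim needs an ``all or nothing'' property: for a point $p$ in a non-mixed cell $Q$ with $I_Q=\{i:\dim Q_i\geq 1\}$, either $p\in\mbox{Im}\,\varphi_{\calS,I}$ for \emph{every} admissible $I$ (those with $I_Q\subset I\varsubsetneq[r]$) or for \emph{none} of them. Property (S) does not give this: it only compares each $\calW_I$ with $\calW_{[r]}$ and imposes no coherence between two proper levels $\calW_I$ and $\calW_{I'}$; even the stronger upward-closure property of Example \ref{E:main} does not give it. For $r=2$ the issue is invisible (each non-mixed cell admits exactly one proper $I$), but it is fatal for $r\geq 3$.

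Concretely, take $r=3$, $n=1$, $\calW_1=\{0,3\}$, $\calW_2=\calW_3=\{0,1\}$, $\calW_I=\sum_{i\in I}\calW_i$, with $\mu_1(0)=0,\ \mu_1(3)=3$, $\mu_2(0)=0,\ \mu_2(1)=2$, $\mu_3(0)=0,\ \mu_3(1)=3$. The mixed subdivision of $P=[0,5]$ is pure, with cells $[0,3]+0+0$, $3+[0,1]+0$, $3+1+[0,1]$, and property (S) holds; one-dimensional data with small $\delta>0$ are admissible here exactly as in the paper's Example \ref{E:exc}. The point $1\in\calW\cap(\delta+[0,3])$ lies in $\mbox{Im}\,\varphi_{\calS,\{1,2\}}$ and in $\mbox{Im}\,\varphi_{\calS,\{1,3\}}$ (because $1\in\calW_{12}=\calW_{13}$) but not in $\mbox{Im}\,\varphi_{\calS,\{1\}}$ (because $1\notin\calW_1$); it is excessive exactly for $I=\{1\}$, and its net weight is $-(-1)^{3-1}=-1$, not $+1$. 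In this example $N(\bW)=0$, $A=0$, $N_{\delta}(\bW)=0$, while $\mbox{Exc}_{\calS,\delta}=\{1,2\}$ has two elements: the signed identity above holds ($0=0-(2-1-1)$), but equation \eqref{E:Nsumbis}, with $\mbox{Exc}_{\calS,\delta}$ the union defined in the paper, fails. So your final step cannot be carried out by the pointwise weight count you propose, nor rescued by Lemma \ref{L:stable} and induction; some hypothesis beyond (S), excluding such ``partially excessive'' points, is required. You should be aware that the paper's own proof makes the same leap --- its first display tacitly assumes $\mbox{Exc}_{\calS,\delta}$ is disjoint from $\bigcup_{I}\mbox{Im}\,\varphi_{\calS,I}$, and its inclusion--exclusion step needs every point of that union to lie in all admissible images --- so you have correctly isolated the delicate point, but the statement cannot be proved as it stands.
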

\begin{proof}
Since $\delta$ is not parallel to a polytope of $\calS$, the set $\calW \cap (\delta+P)$ is the disjoint union of the sets
$\calW \cap (\delta+Q)$ over all $n$-dimensional polytopes $Q$ of $\calS$.
This gives using Lemma \ref{L:partial}
$$
|\calW \cap (\delta+P)|=A+|\mbox{Exc}_{\calS,\delta}|+|\bigcup_{\emptyset \neq I \varsubsetneq {\bf [}r {\bf ]}}\mbox{Im} \, \varphi_{\calS,I}|
.$$
By inclusion-exclusion principle and injectivity of $\varphi_{\calS,I}$, we get
$$
|\calW  \cap (\delta+P)|=A+|\mbox{Exc}_{\calS,\delta}|-
\sum_{\emptyset \neq I \varsubsetneq {\bf [}r {\bf ]}} (-1)^{r-|I|} 
\big|\calW_I \cap (\delta+P_I) \big|,
$$
and thus
$$
\sum_{\emptyset \neq I \subset {\bf [}r {\bf ]}} (-1)^{r-|I|} 
\big|\calW_I \cap (\delta+P_I) \big|=A+|\mbox{Exc}_{\calS,\delta}|.
$$

For $\delta$ small enough, Lemma \ref{L:lower} gives then

$$
\sum_{\emptyset \neq I \subset {\bf [}r {\bf ]}} (-1)^{r-|I|} 
\big|\calW_I \big|
-
\sum_{\emptyset \neq I \subset {\bf [}r {\bf ]}} (-1)^{r-|I|} 
\big| \calW_I  \cap \big(P_I\big)_{\delta} \big|=
A+|\mbox{Exc}_{\calS,\delta}|
,$$
and the result follows.
\end{proof}

\begin{cor}
Assume that $\bW=(\calW_I)_{I \subset [r]}$ satisfies property (S).
Then for all sufficiently small generic vector $\delta \in \R^n$, we have $N(\bW) \geq N_{\delta}(\bW)$.
\end{cor}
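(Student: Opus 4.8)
The plan is to read this off directly from the decomposition established in Proposition~\ref{P:Nsum}. Since by hypothesis $\bW$ satisfies property (S), I would first invoke the definition to fix functions $\mu_1,\ldots,\mu_r$ with respect to which property (S) holds, and let $\calS$ denote the associated pure convex mixed subdivision of $P=P_1+\cdots+P_r$. The directions to avoid---namely those parallel to some polytope of $\calS$---form a finite union of proper linear subspaces of $\R^n$, so a generic sufficiently small vector $\delta$ is simultaneously small enough for Lemma~\ref{L:lower} and not parallel to any polytope of $\calS$. For such $\delta$, Proposition~\ref{P:Nsum} applies and gives
\[
N(\bW)=A+|\mbox{Exc}_{\calS,\delta}|+N_{\delta}(\bW),
\]
where $A=\sum_{Q} |\calW \cap (\delta+Q)|$ is taken over the mixed polytopes $Q \in \calS$.

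The remaining step is a pair of trivial nonnegativity observations. The quantity $A$ is a finite sum of cardinalities of finite sets, hence $A \geq 0$, and $|\mbox{Exc}_{\calS,\delta}|$ is itself the cardinality of a finite set, hence $\geq 0$ as well. Dropping these two nonnegative summands from the identity above yields $N(\bW) \geq N_{\delta}(\bW)$, which is exactly the assertion.

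I do not expect any genuine obstacle here: all the content is concentrated in Proposition~\ref{P:Nsum}, and the corollary simply discards its two manifestly nonnegative terms. The only point deserving a word of care is the compatibility of the two conditions imposed on $\delta$ (smallness, for Lemma~\ref{L:lower}, and transversality to the polytopes of $\calS$); since the admissible vectors $\delta$ form a generic subset of a neighborhood of the origin, both requirements can be met at once, so the inequality indeed holds for all sufficiently small generic $\delta$.
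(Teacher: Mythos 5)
Your proposal is correct and is exactly the argument the paper intends: the corollary is stated without proof precisely because it follows from Proposition~\ref{P:Nsum} by discarding the two manifestly nonnegative terms $A$ and $|\mbox{Exc}_{\calS,\delta}|$, as you do. Your extra remark on choosing $\delta$ simultaneously small (for Lemma~\ref{L:lower}) and non-parallel to the polytopes of $\calS$ is a sensible made-explicit version of the genericity hypothesis already built into Proposition~\ref{P:Nsum}.
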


\begin{ex}\label{E:excplusdefec}
Let $\calW_1=\{(0,0),(2,1),(1,2), (2,2)\}$ and $\calW_2=\{(0,0),(2,1),(1,2),(1,1)\}$.
We consider here the discrete mixed volume associated with $\calW_1$ and $\calW_2$.
In Figure \ref{DiscExc1} we have depicted a pure convex mixed subdivision $\calS$ of $P_1+P_2$ associated with some functions $\mu_i:\calW_i \rightarrow \R$
(where $P_i$ is the convex-hull of $\calW_i$). The sets $\calW_1$, $\calW_2$ and $\calW=\calW_1+\calW_2$ are in bold in Figure \ref{DiscExc1}.
The mixed subdivision $\calS$ contains six two-dimensional polytopes, two of them are mixed polytopes,
one non-mixed polytope is $P_1+(1,2)$, the three others non-mixed polytopes are triangles which subdivide $(0,0) +P_2$. The point $(2,3)$ is an excessive point for this subdivision
(with respect to any small generic vector $\delta$) since
$(2,3)=(1,2)+(1,1) \in \calW=\calW_1+\calW_2$, $(2,3) \in P_1+(1,2) \in \calS$ while $(2,3)-(1,2)=(1,1) \notin \calW_1$. Here, we find that
$D(\calW_1,\calW_2)=|\calW|-|\calW_1|-\calW_2|+1=4$.

Note also that $MV(P_1,P_2)=4$, which is the total area of the two mixed polytopes. If we interpret $MV(P_1,P_2)$ as the sum $N(\bW)$ associated with the family defined by
$\calW_I=\Z^2 \cap P_I$ (item (2) of Example \ref{E:main}), we see that $(2,3)$ is not an excessive point for this family and the considered mixed subdivision,
but on the other hand $(2,2) \in \Z^2 \setminus \calW$.
Both points compensate each other when computing separatively
$N(\bW)$ and $D(\calW_1,\calW_2)$ using Proposition \ref{P:Nsum}.
\end{ex}

\begin{figure}[htb]
\begin{center}
\includegraphics[scale=0.3]{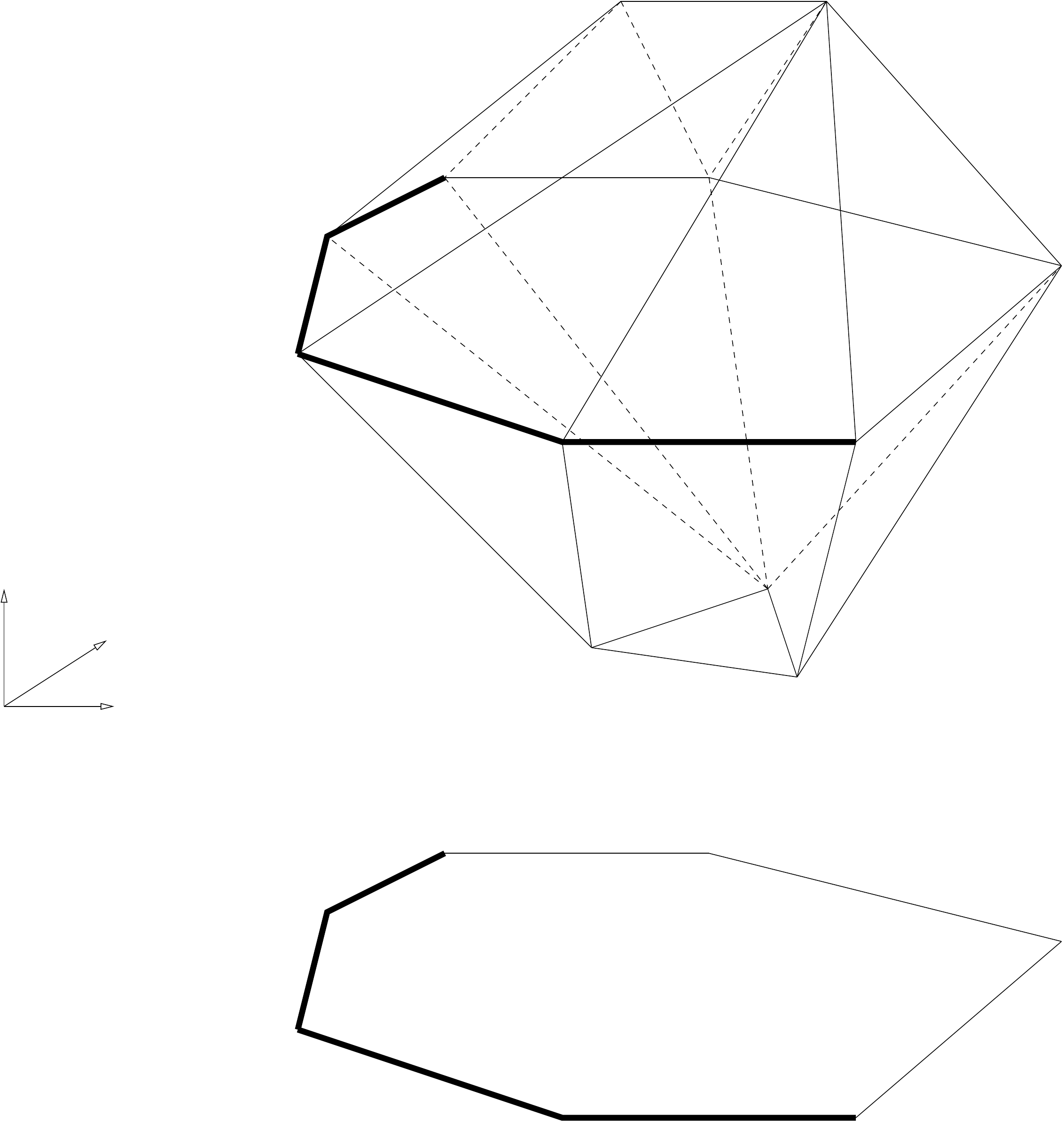}
\caption{A piecewise linear manifold $P_{\delta_1,\delta_2} \subset P \subset \R^3$
and the polygon $\Delta_1 \subset \R^2$ (here $\delta_1=(0,0,1)$ and $\delta_2=(1,0,0)$). }  
  \label{F:piecewise}\end{center}
\end{figure}
\bigskip

Let $P \subset \R^n$ be a polytope and let $\delta_1,\ldots,\delta_n$ be linearly independent vectors in $\R^n$.
Let $k \in [n]$.  Denote by $\pi_k$ the restriction to $P$ of the quotient map $\R^n \rightarrow \R^n / \langle \delta_1,\ldots, \delta_k \rangle$.
We define inductively a piecewise-linear manifold $P_{\delta_1,\ldots,\delta_k} \subset P$ and denote by $\Delta_k$ its image under the quotient map $\pi_k$.
We also check inductively that $\Delta_k$ is a polytope in $\R^n / \langle \delta_1,\ldots, \delta_k \rangle$.
If $k=1$, then $P_{\delta_1}$ is the lower part of $P$ with respect to $\delta_1$ and obviously $\Delta_1$ is a polytope in $\R^n / \langle \delta_1 \rangle$.
Let $k \in [n-1]$. Assume that $P_{\delta_1,\ldots,\delta_{k}}$ has already been defined and that $\Delta_k$  is a polytope.
Then take the lower part of $\Delta_{k}$ with respect to $\pi_{k}(\delta_{k+1})$ and define $P_{\delta_1,\ldots,\delta_{k+1}}$ as its inverse image under $\pi_{k}$:
 $$P_{\delta_1,\ldots,\delta_{k+1}}= \pi_{k}^{-1} \left( {(\Delta_{k})}_{\small{\pi_{k}(\delta_{k+1})}} \right).$$
It is obvious that $\Delta_{k+1}$ is a polytope. Note that $P_{\delta_1,\ldots,\delta_{k+1}} \subset P_{\delta_1,\ldots,\delta_{k}}$ and that
$P_{\delta_1,\ldots,\delta_k}$ is a contractible connected piecewise-linear manifold contained in $P$ whose linear pieces are faces of $P$, see Figure \ref{F:piecewise}.

Recall that $P$ is polytope in $\R^n$ and that $m=\dim P$.
\begin{lemma}\label{L:dim}
We have $P_{\delta_1,\ldots,\delta_k}=P$ for $k=1,\ldots,n-m$ and $\dim  P_{\delta_1,\ldots,\delta_k} = n-k$ for $k \geq n-m$.
In particular, $P_{\delta_1,\ldots,\delta_n}$ is a vertex of $P$.
\end{lemma}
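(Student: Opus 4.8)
The plan is to argue by induction on $k$, carrying along the invariant (valid once the $\delta_i$ are generic, see below) that $\pi_k$ restricts to a bijection from $P_{\delta_1,\ldots,\delta_k}$ onto $\Delta_k$, so that $\dim P_{\delta_1,\ldots,\delta_k}=\dim\Delta_k$ at every stage. This is the exact analogue of Lemma \ref{L:projection}: each further quotient $q\colon \R^n/\langle\delta_1,\ldots,\delta_k\rangle\to\R^n/\langle\delta_1,\ldots,\delta_{k+1}\rangle$ restricts to a bijection on the lower part $(\Delta_k)_{\pi_k(\delta_{k+1})}$, because a line directed by the nonzero vector $\pi_k(\delta_{k+1})$ meets the lower part of a polytope in a single point. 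Since $\pi_{k+1}=q\circ\pi_k$, the bijection passes from stage $k$ to stage $k+1$. Linear independence enters here only to guarantee $\pi_k(\delta_{k+1})\neq 0$, so that every lower part in the construction is taken with respect to a genuine direction.

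I would then split the induction according to whether $\Delta_k$ is full-dimensional in the $(n-k)$-dimensional quotient, the threshold being $k=n-m$; the running value is $\dim P_{\delta_1,\ldots,\delta_k}=\dim\Delta_k=\min(m,n-k)$. For $k<n-m$ one has $\dim\Delta_k=m<n-k$, so $\Delta_k$ is not full-dimensional and, by the definition of the lower part, $(\Delta_k)_{\pi_k(\delta_{k+1})}=\Delta_k$. Pulling back gives $P_{\delta_1,\ldots,\delta_{k+1}}=\pi_k^{-1}(\Delta_k)=P_{\delta_1,\ldots,\delta_k}$. Since $\dim P=m<n$ forces $P_{\delta_1}=P$, this propagates to $P_{\delta_1,\ldots,\delta_k}=P$ for all $k\le n-m$, which is the first assertion (the range is empty when $m=n$).

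For $k\ge n-m$ the polytope $\Delta_k$ is full-dimensional, $\dim\Delta_k=n-k$, and the lower part of a full-dimensional polytope drops the dimension by exactly one; hence $\dim(\Delta_k)_{\pi_k(\delta_{k+1})}=n-k-1$, and the bijection invariant transfers this to $\dim P_{\delta_1,\ldots,\delta_{k+1}}=\dim\Delta_{k+1}=n-(k+1)$, propagating full-dimensionality. The base of this regime is $k=n-m$, where $P_{\delta_1,\ldots,\delta_{n-m}}=P$ and one must check $\dim\pi_{n-m}(P)=m$. At $k=n$ we reach $\dim P_{\delta_1,\ldots,\delta_n}=0$; since the construction always yields a connected (indeed contractible) union of faces of $P$, a zero-dimensional such set is a single vertex, giving the last assertion.

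The main obstacle is precisely the full-dimensionality of $\Delta_k$ used for $k\ge n-m$, equivalently the bijectivity of $\pi_k$ at the stages where $P_{\delta_1,\ldots,\delta_k}$ is lower-dimensional. Writing $V$ for the linear space parallel to the affine hull of $P$, the base step $\dim\pi_{n-m}(P)=m$ amounts to $V\cap\langle\delta_1,\ldots,\delta_{n-m}\rangle=\{0\}$, a transversality condition that fails for some linearly independent $\delta_i$: for a segment $P\subset\R^2$ directed by $\delta_1$ one finds $P_{\delta_1,\delta_2}=P$ of dimension $1\neq n-2$. I would therefore run the argument under the genericity already standing throughout the paper, namely that the $\delta_i$ are not parallel to any face of $P$; this keeps each $\pi_k(\delta_{k+1})$ outside the direction space of $\Delta_k$ and every $\Delta_k$ of the expected dimension $\min(m,n-k)$.
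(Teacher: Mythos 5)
Your induction is, at bottom, the same argument as the paper's own proof, which disposes of the lemma in one sentence: induct on $k$, using the facts that $P_{\delta}=P$ when $\dim P<n$ and $\dim P_{\delta}=n-1$ when $\dim P=n$, applied to $\Delta_k$ inside the quotient $\R^n/\langle\delta_1,\ldots,\delta_k\rangle$. What you add is the bookkeeping the paper leaves implicit: the invariant that $\pi_k$ restricts to a bijection $P_{\delta_1,\ldots,\delta_k}\to\Delta_k$ (the analogue of Lemma \ref{L:projection}), which is exactly what allows $\dim\Delta_k$ to be transferred back to $\dim P_{\delta_1,\ldots,\delta_k}$. You are also right --- and this is the most valuable part of your write-up --- that this invariant can fail if the $\delta_i$ are only assumed linearly independent: your example of a segment $P\subset\R^2$ directed by $\delta_1$ does give $P_{\delta_1,\delta_2}=P$, so the lemma as literally stated needs a genericity hypothesis, which is indeed in force everywhere the lemma is applied in the paper (Corollary \ref{C:lowerdiscrete}, Proposition \ref{P:zero}: ``sufficiently small \emph{generic} vectors'').

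The one genuine inaccuracy is your closing claim that it suffices to assume each $\delta_i$ is not parallel to any face of $P$. This is not sufficient once the affine hull of $P$ has codimension at least $2$. Take $P\subset\R^3$ a segment with direction $v$, and $\delta_1=v+e$, $\delta_2=v-e$ with $e\notin\langle v\rangle$: neither $\delta_1$ nor $\delta_2$ is parallel to any face of $P$, yet $v\in\langle\delta_1,\delta_2\rangle$, so $\pi_2$ collapses $P_{\delta_1,\delta_2}=P$ to a point, $\Delta_2$ is a point in the one-dimensional quotient, and $P_{\delta_1,\delta_2,\delta_3}=P$ is again not a vertex. What your induction actually requires, writing $V$ for the direction space of the affine hull of $P$, is that $\delta_{k+1}\notin V+\langle\delta_1,\ldots,\delta_k\rangle$ for $k=0,\ldots,n-m-1$; given linear independence this is equivalent to $V\cap\langle\delta_1,\ldots,\delta_{n-m}\rangle=\{0\}$, a condition that holds for generic $\delta_1,\ldots,\delta_n$ but is strictly stronger than face-by-face non-parallelism of the individual $\delta_i$. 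Once $k\geq n-m$ and $\Delta_k$ is full-dimensional in its quotient, linear independence (i.e.\ $\pi_k(\delta_{k+1})\neq 0$) does suffice for the remaining steps, exactly as in your argument.
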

\begin{proof}
This follows by induction using the facts that $P_{\delta}=P$ when $P \subset \R^n$ has dimension smaller than $n$ and $\dim P_{\delta} = n-1$ when $\dim P=n$.
\end{proof}

One may observe that if $\dim P=n$ then
$$P_{\delta_1,\ldots,\delta_{k+1}} =  \left(\bigcap_{i=1}^k (P_{\delta_i} \cap P_{-\delta_i})\right)  \cap P_{\delta_{k+1}},$$
where $k \in [n-1]$. Note that the condition $\dim P=n$ cannot be removed since if $\dim P <n$ then the right member of the previous equality is equal to $P$.

For a given family $\bW=(\calW_I)_{I \subset [r]}$ and  linearly independent vectors $\delta_1,\ldots,\delta_n$ in $\R^n$, we define
$$
N_{\delta_1,\ldots,\delta_k}(\bW)=\sum_{I \subset [r]} {(-1)}^{r-|I|} |\calW_I \cap (P_I)_{\delta_1,\ldots,\delta_k}|,$$
where $k \in [n]$ and with the convention that $|\calW_{\emptyset}\cap (P_{\emptyset})_{\delta_1,\ldots,\delta_k}|=1$.
We now relate $N_{\delta_1,\ldots,\delta_k}(\bW)$ with $N_{\delta_1,\ldots,\delta_{k+1}}(\bW)$ assuming that $\bW$ satisfies property (S).
Let $k \in [n-1]$ and assume that $\bW$ satisfies property (S) with respect to functions $\mu_1,\ldots,\mu_r$. By Lemma \ref{L:dim},
we have $\dim P_{\delta_1,\ldots,\delta_k} \leq n-k$ and $P_{\delta_1,\ldots,\delta_k}=P_{\delta_1,\ldots,\delta_{k+1}}$ when this inequality is strict.
Therefore, if $\dim P_{\delta_1,\ldots,\delta_k} < n-k$ then $N_{\delta_1,\ldots,\delta_k}(\bW)=N_{\delta_1,\ldots,\delta_{k+1}}(\bW)$.

Assume that $P_{\delta_1,\ldots,\delta_k}$ has dimension $n-k$ and denote by $F_1,\ldots,F_s$ its facets.
Write each facet $F_j$ as $F_j=\sum_{i=1}^r F_{i,j}$, where $F_{i,j}$ is a face of $P_i$.
Let $\calS$ be the pure convex mixed subdivision of $P$ associated with $\mu_1,\ldots,\mu_r$.
Denote by $\calS_j$ the pure convex mixed subdivision of $F_j$ associated with the restrictions of $\mu_1,\ldots,\mu_r$
to $F_{1,j},\ldots,F_{r,j}$ respectively. We simply have $\calS_j=\calS \cap F_j$.
By Lemma \ref{L:stable}, the family $\bW_{F_j}=(\calW_I \cap F_{I,j})_{I \subset [r]}$, where $F_{I,j}=\sum_{i \in I} F_{i,j}$, satisfies property (S)
with respect to the restrictions of $\mu_1,\ldots,\mu_r$ to $F_{1,j},\ldots,F_{r,j}$ respectively. In order to apply Lemma \ref{L:partial}, we need to project this family
onto $\R^n/\langle \delta_1,\ldots,\delta_k \rangle$.

Set $F'_{I,j}=\pi_k(F_{I,j})$, $F'_j=\pi_k(F_j)$, $F'_{i,j}=\pi_k(F_{i,j})$,
$\calW'_{I,j}=\pi_k(\calW_I \cap F_{I,j})$, $\calW'_{j}=\pi_k(\calW \cap F_{j})$ and $\calW'_{i,j}=\pi_k(\calW_i \cap F_{i,j})$.
Define $\mu'_{i,j}:\pi_{k}(\calV_i \cap F_{i,j}) \rightarrow \R$, $x \mapsto (\mu_i \circ \pi_k^{-1})(x)$. Then $\calS_{i,j}'=\pi_k(\calS_{i,j})$ is the convex subdivision of $F_{i,j}'$ associated with $\mu'_{i,j}$ and $\calS_j'=\pi_k(\calS_j)$ is the pure convex mixed subdivision of $F'_j$ associated with $\mu'_{1,j},\ldots,\mu'_{r,j}$.
Moreover,  projecting $\bW_{F_j}$ onto $\R^n/\langle \delta_1,\ldots,\delta_k \rangle$, we get the family
$\bW_{F'_j}=(\calW'_{I,j})_{I \subset [r]}$ which satisfies property (S) with respect to $\mu'_{1,j},\ldots,\mu'_{r,j}$.
Note that $F'_j$ has full dimension in $\R^n/\langle \delta_1,\ldots,\delta_k \rangle$ since we assumed that $P_{\delta_1,\ldots,\delta_k}$ has dimension $n-k$
and $\delta_1,\ldots,\delta_k$ are linearly independent.

We apply Lemma \ref{L:partial} to the family $\bW_{F'_j}$.
This gives for any nonempty proper subset $I \subset [r]$ and $\delta'=\pi_k(\delta_{k+1})$
an injective map
$$\varphi_{\calS'_j,I}: \calW'_{I,j} \bigcap (\delta'+F'_{I,j} ) \longrightarrow
\bigsqcup_{\tiny Q \in \calS'_j, \;\forall \, \ell \notin I, \; \dim Q_{\ell}=0} \calW'_{j} \cap (\delta+Q),$$
where $Q_{\ell} \in \calS'_{\ell,j}$ is a summand of $Q$ written as a sum of polytopes of $\calS'_{1,j},\ldots,\calS'_{r,j}$ respectively.
Denote by $A'_j$  the sum $\sum_{Q \in \calS'_j, \, \dim(Q_{\ell}) \geq 1} |\calW'_{j}\cap (\delta'+Q)|$
over all polytopes $Q=Q_1+\cdots+Q_r \in \calS'_j$ such that  $\dim(Q_{\ell}) \geq 1$ for $\ell=1,\ldots,r$.
Denote by $\mbox{Exc}_{\calS'_j,I,\delta'}$ the complementary part of the image of $\varphi_{\calS'_j,I}$
and set $\mbox{Exc}_{\calS'_j,\delta'}=\bigcup_{I \varsubsetneq {\bf [}r {\bf ]}}\mbox{Exc}_{\calS'_j,I,\delta'}$.

\begin{prop}\label{P:Nsumg}
Assume that $\bW=(\calW_I)_{I \subset [r]}$ satisfies property (S) with respect to $(\mu_1,\ldots,\mu_r)$.
If $\dim P_{\delta_1,\ldots,\delta_k} <n-k$ then $N_{\delta_1,\ldots,\delta_{k}}(\bW)=N_{\delta_1,\ldots,\delta_{k+1}}(\bW)$.
If $\dim P_{\delta_1,\ldots,\delta_k} =n-k$ and if $\delta'$ is a sufficiently small vector not parallel to a polytope of $\calS_1' \cup \cdots \cup \calS_s'$,
then 
$$
N_{\delta_1,\ldots,\delta_{k}}(\bW)=N_{\delta_1,\ldots,\delta_{k+1}}(\bW)+\sum_{j=1}^s \left(A'_j+|\mbox{Exc}_{\calS'_j,\delta'}|\right),$$
\end{prop}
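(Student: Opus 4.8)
The plan is to treat the two cases of the statement separately, disposing of the degenerate case immediately and reducing the main case, via the projection $\pi_k$, to an application of Proposition \ref{P:Nsum} on each facet of the piecewise-linear manifold $P_{\delta_1,\ldots,\delta_k}$. First, if $\dim P_{\delta_1,\ldots,\delta_k}<n-k$, then by Lemma \ref{L:dim} we have $n-k>\dim P$, and since every $P_I$ is a Minkowski summand of $P$ we get $\dim P_I\leq \dim P<n-k$; applying Lemma \ref{L:dim} to each $P_I$ then gives $(P_I)_{\delta_1,\ldots,\delta_k}=P_I=(P_I)_{\delta_1,\ldots,\delta_{k+1}}$, so the two alternating sums agree term by term. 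This is the equality already noted before the statement, and it requires no genericity of $\delta'$.

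Now assume $\dim P_{\delta_1,\ldots,\delta_k}=n-k$, set $\delta'=\pi_k(\delta_{k+1})$, and write $\Delta_k^I=\pi_k\big((P_I)_{\delta_1,\ldots,\delta_k}\big)$. Each iterated lower part $(P_I)_{\delta_1,\ldots,\delta_k}$ is a graph over $\Delta_k^I$, so $\pi_k$ is injective on it (the iterated analogue of Lemma \ref{L:projection}); let $\calU_I=\pi_k\big(\calW_I\cap (P_I)_{\delta_1,\ldots,\delta_k}\big)$. By definition $(P_I)_{\delta_1,\ldots,\delta_{k+1}}$ is the portion of $(P_I)_{\delta_1,\ldots,\delta_k}$ lying over the lower part of $\Delta_k^I$ with respect to $\delta'$, so Lemma \ref{L:lower} applied in $\R^n/\langle\delta_1,\ldots,\delta_k\rangle$ gives
$$\big|\calW_I\cap (P_I)_{\delta_1,\ldots,\delta_k}\big|-\big|\calW_I\cap (P_I)_{\delta_1,\ldots,\delta_{k+1}}\big|=\big|\calU_I\cap(\delta'+\Delta_k^I)\big|.$$
Taking the alternating sum over $I\subset[r]$ yields
$$N_{\delta_1,\ldots,\delta_{k}}(\bW)-N_{\delta_1,\ldots,\delta_{k+1}}(\bW)=\sum_{I\subset[r]}(-1)^{r-|I|}\big|\calU_I\cap(\delta'+\Delta_k^I)\big|.$$

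The heart of the argument is to identify, for each fixed $I$, the shifted count $\big|\calU_I\cap(\delta'+\Delta_k^I)\big|$ with the facet contributions. The full-dimensional pieces $F_{I,j}=\sum_{i\in I}F_{i,j}$ of $(P_I)_{\delta_1,\ldots,\delta_k}$ project to the top cells $F'_{I,j}=\pi_k(F_{I,j})$, and I claim these form a genuine polyhedral subdivision of $\Delta_k^I$: they cover it, and two distinct facets $F_j,F_{j'}$ of $P_{\delta_1,\ldots,\delta_k}$ whose $I$-parts are full-dimensional yield distinct cells, because the top pieces of a lower part carry pairwise distinct outer normals (the higher-dimensional analogue of the fact that the edges of a lower hull have pairwise distinct slopes). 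Granting this, the very decomposition over top cells used at the start of the proof of Proposition \ref{P:Nsum} applies: for $\delta'$ sufficiently small and not parallel to any polytope of $\calS'_1\cup\cdots\cup\calS'_s$, each point $u\in\calU_I\cap(\delta'+\Delta_k^I)$ has $u-\delta'$ in the interior of a unique cell $F'_{I,j}$, so that $\calU_I\cap(\delta'+\Delta_k^I)=\bigsqcup_j\big(\calW'_{I,j}\cap(\delta'+F'_{I,j})\big)$, where I use that $\calU_I\cap F'_{I,j}=\calW'_{I,j}$.

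Finally I assemble with Proposition \ref{P:Nsum} on each facet. By Lemma \ref{L:stable} the family $\bW_{F_j}$ satisfies property (S), and its projection $\bW_{F'_j}$ satisfies property (S) with respect to $\mu'_{1,j},\ldots,\mu'_{r,j}$ on the full-dimensional polytope $F'_j$. Applying Proposition \ref{P:Nsum} to $\bW_{F'_j}$ with the vector $\delta'$, and rewriting each term by Lemma \ref{L:lower} as $|\calW'_{I,j}|-|\calW'_{I,j}\cap (F'_{I,j})_{\delta'}|=|\calW'_{I,j}\cap(\delta'+F'_{I,j})|$, gives
$$A'_j+|\mbox{Exc}_{\calS'_j,\delta'}|=\sum_{I\subset[r]}(-1)^{r-|I|}\big|\calW'_{I,j}\cap(\delta'+F'_{I,j})\big|.$$
Summing over $j$ and substituting the disjoint-union identity of the previous paragraph into the alternating sum computed above produces exactly $N_{\delta_1,\ldots,\delta_{k}}(\bW)-N_{\delta_1,\ldots,\delta_{k+1}}(\bW)=\sum_{j=1}^s\big(A'_j+|\mbox{Exc}_{\calS'_j,\delta'}|\big)$, as desired. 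The main obstacle is the middle paragraph: establishing that the full-dimensional $I$-parts $F'_{I,j}$ genuinely subdivide $\Delta_k^I$, that is, that they cover it with disjoint interiors and that no two distinct facets of the lower part collapse onto the same full-dimensional cell; once this polytope-combinatorial fact is in hand, the remaining steps are bookkeeping built from Lemmas \ref{L:lower}, \ref{L:projection}, \ref{L:stable} and Proposition \ref{P:Nsum}.
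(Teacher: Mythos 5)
Your proof is correct and follows essentially the same route as the paper's: decompose over the facets of the projected iterated lower part, run the argument of Proposition \ref{P:Nsum} on each facet via Lemma \ref{L:stable} and the projected families $\bW_{F'_j}$, and convert back with Lemmas \ref{L:lower} and \ref{L:projection} -- you simply perform the lower-part/projection reduction first and the facet decomposition second, whereas the paper does the reverse. The ``main obstacle'' you isolate, namely the gluing identity $\sum_{j}\big|\calW'_{I,j}\cap(\delta'+F'_{I,j})\big|=\big|\calU_I\cap(\delta'+\Delta_k^I)\big|$ for each $I$, is precisely the step the paper asserts without comment (``But since \dots''), so your treatment, including the distinct-outer-normals justification that distinct facets cannot share a full-dimensional $I$-part, is if anything more explicit than the paper's own proof.
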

\begin{proof}
We already showed that if $\dim P_{\delta_1,\ldots,\delta_k} <n-k$ then $N_{\delta_1,\ldots,\delta_{k}}(\bW)=N_{\delta_1,\ldots,\delta_{k+1}}(\bW)$.
So assume that $\dim P_{\delta_1,\ldots,\delta_k} =n-k$.
Set $P'=\pi_{k}(P_{\delta_1,\ldots,\delta_k})$ and $P'_I=\pi_{k}({(P_I)}_{\delta_1,\ldots,\delta_k})$ for any $I \subset [r]$.
Since $\delta'$ is not parallel to a polytope of $\calS_1' \cup \cdots \cup \calS_s'$,
we have a disjoint union
$$
\calW' \cap (\delta'+P') = \bigcup_{j=1}^s \left(\calW'_j \cap (\delta'+F'_j) \right).$$
Moreover, arguing as in the proof of Proposition \ref{P:Nsum}, we obtain
$$
|\calW'_j \cap (\delta'+F_j')|=A'_j+|\mbox{Exc}_{\calS_j',\delta'}|-
\sum_{\emptyset \neq I \varsubsetneq {\bf [}r {\bf ]}} (-1)^{r-|I|} 
\big|\calW'_I \cap (\delta'+F'_{I,j}) \big|.$$
Taking the sum for $j=1,\ldots,s$ yields
$$
|\calW' \cap (\delta'+P')|=\sum_{j=1}^s \left(A'_j+|\mbox{Exc}_{\calS'_j,\delta'}|\right)
-\sum_{\emptyset \neq I \varsubsetneq {\bf [}r {\bf ]}} (-1)^{r-|I|} 
\sum_{j=1}^s \big|\calW'_I \cap (\delta'+F'_{I,j}) \big|.$$
But since $\sum_{j=1}^s \big|\calW'_I \cap (\delta'+F'_{I,j}) \big|=\big|\calW'_I \cap (\delta'+P'_I) \big|$, we get
$$
\sum_{\emptyset \neq I \subset {\bf [}r {\bf ]}} (-1)^{r-|I|} \big|\calW'_I \cap (\delta'+P'_I) \big|=\sum_{j=1}^s \left(A'_j+|\mbox{Exc}_{\calS'_j,\delta'}|\right)
.$$
For $\delta'$ small enough Lemma \ref{L:lower} yields
$\big|\calW'_I \cap (\delta'+P'_I) \big|=\big|\calW'_I \cap (P'_I) \big|-\big|\calW'_I \cap (P'_I)_{\delta'} \big|$,
and It remains to use Lemma \ref{L:projection}.
\end{proof}

\begin{cor} \label{C:lowerdiscrete}
If $\bW=(\calW_I)_{I \subset [r]}$ satisfies property (S), then for any sufficiently small generic (and thus linearly independent) vectors
$\delta_1,\ldots,\delta_n$ in $\R^n$, we have
$$N(\bW) \geq N_{\delta_1}(\bW) \geq  N_{\delta_1,\delta_2}(\bW) \geq \cdots \geq N_{\delta_1,\ldots,\delta_n}(\bW).$$
\end{cor}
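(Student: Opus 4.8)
The plan is to read this chain of inequalities as a telescoping consequence of Propositions \ref{P:Nsum} and \ref{P:Nsumg}, the whole point being that every correction term appearing in those two identities is manifestly nonnegative. Concretely, I would choose the vectors $\delta_1,\ldots,\delta_n$ one at a time, so that at each stage the smallness and genericity hypotheses of the relevant proposition are satisfied, and then simply discard the nonnegative corrections to pass from an equality to the desired inequality.

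For the first inequality I would apply Proposition \ref{P:Nsum} with $\delta=\delta_1$. Choosing $\delta_1$ sufficiently small and not parallel to any polytope of the pure mixed subdivision $\calS$ of $P$, we obtain
$$N(\bW)=A+|\mbox{Exc}_{\calS,\delta_1}|+N_{\delta_1}(\bW),$$
where $A=\sum_{Q}|\calW\cap(\delta_1+Q)|$ (the sum over mixed polytopes $Q\in\calS$) is a sum of cardinalities and $|\mbox{Exc}_{\calS,\delta_1}|$ is itself a cardinality. Both are $\geq 0$, so $N(\bW)\geq N_{\delta_1}(\bW)$.

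For each intermediate inequality $N_{\delta_1,\ldots,\delta_k}(\bW)\geq N_{\delta_1,\ldots,\delta_{k+1}}(\bW)$, with $k=1,\ldots,n-1$, I would invoke Proposition \ref{P:Nsumg}. If $\dim P_{\delta_1,\ldots,\delta_k}<n-k$, the two quantities are equal and there is nothing to prove. Otherwise $\dim P_{\delta_1,\ldots,\delta_k}=n-k$ and, for $\delta'=\pi_k(\delta_{k+1})$ small and not parallel to any polytope of $\calS'_1\cup\cdots\cup\calS'_s$, the proposition gives
$$N_{\delta_1,\ldots,\delta_k}(\bW)=N_{\delta_1,\ldots,\delta_{k+1}}(\bW)+\sum_{j=1}^s\left(A'_j+|\mbox{Exc}_{\calS'_j,\delta'}|\right).$$
Again each $A'_j$ is a sum of cardinalities and each $|\mbox{Exc}_{\calS'_j,\delta'}|$ is a cardinality, so the whole correction is $\geq 0$ and the inequality follows. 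Concatenating the first inequality with these $n-1$ steps yields the asserted chain.

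The only point requiring genuine care — and the step I expect to be the main obstacle — is that a single tuple $(\delta_1,\ldots,\delta_n)$ must simultaneously meet the hypotheses of all $n$ applications, even though the subdivisions $\calS'_1,\ldots,\calS'_s$ entering the condition on $\delta_{k+1}$ are themselves built from $\delta_1,\ldots,\delta_k$. I would resolve this by a sequential generic choice: having fixed $\delta_1,\ldots,\delta_k$, the constraints on $\delta_{k+1}$ — namely that $\pi_k(\delta_{k+1})$ not be parallel to any of the finitely many positive-dimensional polytopes of $\calS'_1\cup\cdots\cup\calS'_s$, together with linear independence from $\delta_1,\ldots,\delta_k$ — exclude only a finite union of proper linear subspaces, so a generic arbitrarily small $\delta_{k+1}$ is admissible. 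Carrying this out in turn for $\delta_1$ (to apply Proposition \ref{P:Nsum}) and then for $\delta_2,\ldots,\delta_n$ produces an open dense set of admissible small tuples, which is precisely the genericity asserted in the statement.
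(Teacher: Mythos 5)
Your proposal is correct and is essentially the paper's own argument: the corollary is stated there as an immediate consequence of Propositions \ref{P:Nsum} and \ref{P:Nsumg}, obtained by telescoping those identities and discarding the correction terms $A$, $|\mbox{Exc}_{\calS,\delta}|$, $A'_j$, $|\mbox{Exc}_{\calS'_j,\delta'}|$, all of which are cardinalities or sums of cardinalities and hence nonnegative. Your added care about choosing the $\delta_k$ sequentially (each new constraint excluding only finitely many proper subspaces) is a point the paper leaves implicit, but it is the same proof.
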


Assume that $\bW=(\calW_I)_{I \subset [r]}$ satisfies property (S) and let $\delta_1,\ldots,\delta_n \in \R^n$ be sufficiently small generic linearly independent vectors.
For any nonempty $I \subset [r]$, we get a contractible piecewise-linear submanifold $(P_I)_{\delta_1,\ldots,\delta_n}$ of $P_I$ which has dimension zero (see Lemma \ref{L:dim}).
Thus $(P_I)_{\delta_1,\ldots,\delta_n}$ is a vertex of $P_I$ for any nonempty $I \subset [r]$. In fact, if $u_i$ is the vertex $(P_i)_{\delta_1,\ldots,\delta_n}$, then $(P_I)_{\delta_1,\ldots,\delta_n}$ is the point $\sum_{i \in I} u_i$, that we denote by $u_I$. The definition of $N_{\delta_1,\ldots,\delta_{n}}(\bW)$ translates then into the following result.

\begin{prop}\label{P:zero}
If $\bW=(\calW_I)_{I \subset [r]}$ satisfies property (S), then for any sufficiently small generic (and thus linearly independent) vectors
$\delta_1,\ldots,\delta_n$ in $\R^n$, we have
$$N_{\delta_1,\ldots,\delta_{n}}(\bW)=(-1)^r
+\sum_{\emptyset \neq I \subset [r]} (-1)^{r-|I|} \ind_{u_I \in \calW_I},$$
where $\ind_{u_I \in \calW_I}=1$ if $u_I \in \calW_I$ and $0$ otherwise.
\end{prop}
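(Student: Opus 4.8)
The plan is to read the statement directly off the definition of $N_{\delta_1,\ldots,\delta_n}(\bW)$, since essentially all of the geometric work has been front-loaded into the facts recorded in the paragraph preceding the statement. Recall that by definition
$$N_{\delta_1,\ldots,\delta_n}(\bW)=\sum_{I \subset [r]} (-1)^{r-|I|}\,\big|\calW_I \cap (P_I)_{\delta_1,\ldots,\delta_n}\big|,$$
with the convention $|\calW_\emptyset \cap (P_\emptyset)_{\delta_1,\ldots,\delta_n}|=1$. First I would isolate the term $I=\emptyset$, which by this convention contributes exactly $(-1)^r$, and then handle the nonempty subsets separately.

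For nonempty $I$ the key input is Lemma \ref{L:dim} applied to the polytope $P_I$ in $\R^n$: since $\delta_1,\ldots,\delta_n$ are linearly independent, the iterated lower part $(P_I)_{\delta_1,\ldots,\delta_n}$ has dimension $n-n=0$, so it is a single vertex of $P_I$. Combining this with the identification $(P_I)_{\delta_1,\ldots,\delta_n}=u_I$, where $u_I=\sum_{i\in I}u_i$, established just before the statement, the count reduces to $\big|\calW_I \cap \{u_I\}\big|$, which equals $1$ when $u_I\in\calW_I$ and $0$ otherwise, i.e.\ exactly $\ind_{u_I\in\calW_I}$. Substituting these values and the $I=\emptyset$ contribution back into the defining sum yields
$$N_{\delta_1,\ldots,\delta_n}(\bW)=(-1)^r+\sum_{\emptyset\neq I\subset[r]}(-1)^{r-|I|}\,\ind_{u_I\in\calW_I},$$
which is the assertion. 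Property (S) is used only insofar as it guarantees that the construction of the preceding paragraph (the vectors $u_I$ and the lower parts $(P_I)_{\delta_1,\ldots,\delta_n}$) is meaningful; the displayed computation itself is pure bookkeeping with signs and cardinalities.

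The only step carrying genuine content is the vertex identification $(P_I)_{\delta_1,\ldots,\delta_n}=\sum_{i\in I}u_i$, and this is stated as already established in the text, so I would simply invoke it. Were I to prove it from scratch, I would argue that for generic linearly independent $\delta_1,\ldots,\delta_n$ the iterated lower-part operation $P\mapsto P_{\delta_1,\ldots,\delta_n}$ selects the unique vertex that is extremal with respect to the ordered family of linear functionals $\langle\delta_1,\cdot\rangle,\ldots,\langle\delta_n,\cdot\rangle$ (a lexicographic extremum), and that such an extremization commutes with Minkowski sums: the extremal vertex of $P_I=\sum_{i\in I}P_i$ is the sum of the extremal vertices $u_i$ of the summands $P_i$. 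This is precisely the compatibility of lower faces of $\sum_{i\in I}\hat P_i$ with the summands $\hat P_i$ already exploited in defining convex mixed subdivisions. I expect this Minkowski-sum compatibility to be the main (though entirely standard) obstacle; once it is in place the proposition follows by the substitution above.
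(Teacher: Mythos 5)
Your proof is correct and takes essentially the same route as the paper, which offers no separate argument at all: it treats the proposition as a direct translation of the definition of $N_{\delta_1,\ldots,\delta_n}(\bW)$, given the identification $(P_I)_{\delta_1,\ldots,\delta_n}=u_I=\sum_{i\in I}u_i$ established (via Lemma \ref{L:dim}) in the paragraph preceding the statement. Your closing sketch of why the lexicographic extremum commutes with Minkowski sums is the standard justification of that identification and is consistent with how the paper handles lower faces of Minkowski sums.
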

%

\begin{thm}\label{T:nonnegative}
We have the following nonnegativity properties.
\begin{enumerate}
\item 
If $\calW_1,\ldots,\calW_r$ are finite nonempty subsets of $\R^n$ then
$$\sum_{I \subset {\bf [}r {\bf ]}} (-1)^{r-|I|} 
\big|\sum_{i \in I} \calW_i \big| \geq 0.$$

\item 
If $P_1,\ldots,P_r$ are lattice polytopes in $\R^n$ then
$$\sum_{I \subset {\bf [}r {\bf ]}} (-1)^{r-|I|} 
\big|\Z^n \cap P_I\big| \geq 0.$$

\item If $\calW_1,\ldots,\calW_r$ are finite nonempty subsets of $\R^n$ with convex-hulls $P_1,\ldots,P_r$ respectively,
then
$$\sum_{\emptyset \neq I \subset {\bf [}r {\bf ]}} (-1)^{r-|I|} 
\big|(\sum_{i \in I} \calW_i) \cap Int_{abs}(P_I) \big| \geq 0,$$
where $Int_{abs}(P_I)$ is the absolute interior of $P_I$.

\item Let $P_1,\ldots,P_r$ be lattice polytopes in $\R^n$. With the same notations, we have
$$\sum_{\emptyset \neq I \subset {\bf [}r {\bf ]}} (-1)^{r-|I|} 
\big| \Z^n \cap Int_{abs}(P_I) \big| \geq 0.$$

\item Let $P_1,\ldots,P_r$ be polytopes in $\R^n$ and $\calS$ be a pure convex mixed subdivision of $P=P_1+\cdots+P_r$ associated with functions
$\mu_1,\ldots,\mu_r$. If $\calV_I$ is the set of vertices of the convex subdivision of $P_I$ associated with $\mu_i$ for $i \in I$, then
$$\sum_{ I \subset {\bf [}r {\bf ]}} (-1)^{r-|I|} 
\big| \calV_I \big| \geq 0.$$
\end{enumerate}
\end{thm}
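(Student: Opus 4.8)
The plan is to treat all five items uniformly as instances of the single mechanism supplied by Corollary \ref{C:lowerdiscrete} and Proposition \ref{P:zero}, distinguishing the cases only at the very last step by computing one indicator. First I would observe that in each item the family $\bW=(\calW_I)$ is precisely the corresponding family of Example \ref{E:main} (item $(k)$ of the theorem matches item $(k)$ there), so by that example each such $\bW$ satisfies property (S). Hence Corollary \ref{C:lowerdiscrete} applies: for sufficiently small generic linearly independent $\delta_1,\ldots,\delta_n\in\R^n$ it yields the single inequality $N(\bW)\geq N_{\delta_1,\ldots,\delta_n}(\bW)$. By Proposition \ref{P:zero},
\[
N_{\delta_1,\ldots,\delta_n}(\bW)=(-1)^r+\sum_{\emptyset\neq I\subset[r]}(-1)^{r-|I|}\,\ind_{u_I\in\calW_I},
\]
where $u_I=\sum_{i\in I}u_i$ and $u_i=(P_i)_{\delta_1,\ldots,\delta_n}$. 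Thus the whole proof reduces to deciding, in each family and for each nonempty $I$, whether $u_I$ lies in $\calW_I$.

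The key observation is that by Lemma \ref{L:dim} each $u_i$ is a vertex of $P_i$, and therefore $u_I$ is a vertex of $P_I$. I would then split into two groups. For items (1) and (2) a vertex of $P_i$ always belongs to $\calW_i$ — it is one of the generating points of $\calW_i$ in item (1) and a lattice point of the lattice polytope $P_i$ in item (2) — and since $u_I=\sum_{i\in I}u_i$ with $\calW_I=\sum_{i\in I}\calW_i$ (resp.\ $\calW_I=\Z^n\cap P_I$) one gets $u_I\in\calW_I$. For item (5), $u_I$ is a vertex of $P_I$, hence a vertex of the associated subdivision, so $u_I\in\calV_I$. In all three cases every indicator equals $1$, whence
\[
N_{\delta_1,\ldots,\delta_n}(\bW)=\sum_{I\subset[r]}(-1)^{r-|I|}=0.
\]
For items (3) and (4), by contrast, $\calW_I$ is contained in the absolute interior of $P_I$, while $u_I$ is a vertex of $P_I$ and so never lies in that absolute interior; every indicator is then $0$ and $N_{\delta_1,\ldots,\delta_n}(\bW)=(-1)^r$.

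Combining these computations with $N(\bW)\geq N_{\delta_1,\ldots,\delta_n}(\bW)$ finishes everything. For items (1), (2) and (5) one obtains directly $N(\bW)\geq 0$, which is the claimed inequality since there the displayed alternating sum is exactly $N(\bW)$. For items (3) and (4), using the convention $|\calW_\emptyset|=1$ gives $N(\bW)=(-1)^r+\sum_{\emptyset\neq I\subset[r]}(-1)^{r-|I|}|\calW_I|$, so $N(\bW)\geq(-1)^r$ is precisely the nonnegativity of the sum over nonempty $I$ appearing in the statement.

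I expect the genuinely delicate content to be the verification that each family satisfies property (S), but this is exactly what Example \ref{E:main} provides, so here it may simply be invoked. Within the present argument the one spot requiring care is items (3) and (4): the conclusion $u_I\notin\calW_I$ rests on using the \emph{absolute} interior rather than the relative interior, matching the choice that makes property (S) hold — as the remark following Example \ref{E:main} shows, the relative-interior variant breaks property (S) and collapses the whole reduction. Once that distinction is respected, the remaining steps are only the binomial identity $\sum_{I\subset[r]}(-1)^{r-|I|}=0$ and the bookkeeping of the empty-set term.
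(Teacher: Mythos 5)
Your proof is correct and takes essentially the same approach as the paper's: both establish property (S) via Example \ref{E:main}, then combine Corollary \ref{C:lowerdiscrete} with Proposition \ref{P:zero}, splitting into the cases $u_I \in \calW_I$ (items (1), (2), (5), giving $N_{\delta_1,\ldots,\delta_n}(\bW)=(1-1)^r=0$) and $u_I \notin \calW_I$ (items (3), (4), giving $N(\bW)\geq (-1)^r$ and hence nonnegativity of the sum over nonempty $I$). Your extra details (that $u_I$ is a vertex of $P_I$, and the empty-set bookkeeping) merely make explicit what the paper leaves implicit.
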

\begin{proof}
We apply Corollary \ref{C:lowerdiscrete} and Proposition \ref{P:zero}.
In items (1), (2) and (5), we have $u_I \in\calW_I$ for any nonempty $I \subset [r]$ and thus $N(\bW) \geq N_{\delta_1,\ldots,\delta_{n}}(\bW)=(-1)^r
+\sum_{\emptyset \neq I \subset [r]} (-1)^{r-|I|}=(1-1)^r=0$. In items (3) and (4), we have $u_I \notin\calW_I$ for any nonempty $I \subset [r]$ and thus
$N(\bW) -(-1)^r \geq N_{\delta_1,\ldots,\delta_{n}}(\bW)-(-1)^r=\sum_{\emptyset \neq I \subset [r]} 0=0$.
\end{proof}

To our knowledge, the inequalities of Theorem \ref{T:nonnegative} are new with a small number of exceptions listed in Example \ref{E:mainsum},
where $N(\bW)$ is known to coincide with a mixed volume or with the genus of complex variety.

Soprunov's Problem 2 in~\cite{BNRSS} asks for a combinatorial proof of the inequality $|\Z^n \cap Int_{abs}(P)| \geq MV(P_1,\ldots,P_n)-1$ which holds true
for any $n$-dimensional lattice polytopes $P_1,\ldots,P_n \subset \R^n$. As explained in Nill's note~\cite{N}, our proof of item (4) in Theorem \ref{T:nonnegative}
provides an answer to Soprunov's Problem. We also note that another combinatorial proof of the case $r=n-1$ of item (2) has been obtained in~\cite{ST}.

Our mixed irrational decomposition trick allows us to obtain the following interpretation
of the sum $N(\bW)$ associated with the family (5) in Example \ref{E:main}.

\begin{prop}\label{P:Nmixedcells}
Let $P_1,\ldots,P_n$ be polytopes in $\R^n$ and $\calS$ be a pure convex mixed subdivision of $P=P_1+\cdots+P_n$ associated with functions
$\mu_1,\ldots,\mu_n$. If $\calV_I$ is the set of vertices of the convex subdivision of $P_I$ associated with $\mu_i$ for $i \in I$, then
$$\sum_{I \subset {\bf [}n {\bf ]}} (-1)^{n-|I|} 
\big| \calV_I \big|$$
coincides with the number of mixed polytopes of
$\calS$. 
\end{prop}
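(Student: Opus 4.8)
The plan is to apply the machinery developed in this section, specializing the general inequality chain to the family $\bV=(\calV_I)$ from item (5) of Example \ref{E:main}, and to identify the final term $N_{\delta_1,\ldots,\delta_n}(\bV)$ with the mixed polytope count. First I would recall that by item (5) of Example \ref{E:main}, the family $\bV=(\calV_I)$ satisfies property (S) with respect to $(\mu_1,\ldots,\mu_n)$, so that Corollary \ref{C:lowerdiscrete} and Proposition \ref{P:zero} both apply. In particular, since each $\calV_I$ contains every vertex of $P_I$, we have $u_I \in \calV_I$ for all nonempty $I \subset [n]$, whence Proposition \ref{P:zero} gives $N_{\delta_1,\ldots,\delta_n}(\bV)=(-1)^n+\sum_{\emptyset \neq I \subset [n]}(-1)^{n-|I|}=(1-1)^n=0$ exactly as in the proof of Theorem \ref{T:nonnegative}. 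This tells us that the full descent to dimension zero kills the alternating sum, so the mixed polytope count must instead be extracted at the top, i.e. from the term $A$ in Proposition \ref{P:Nsum}, not from $N_{\delta_1,\ldots,\delta_n}(\bV)$.

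So the better route is to apply Proposition \ref{P:Nsum} directly to $\bV$ with $r=n$. This yields
$$
N(\bV)=A+|\mbox{Exc}_{\calS,\delta}|+N_{\delta}(\bV),
$$
where $A=\sum_{Q \in \calS, \, \dim Q_i \geq 1}|\calV \cap (\delta+Q)|$ runs over the mixed polytopes $Q=Q_1+\cdots+Q_n$ of $\calS$ (those with every $\dim Q_i \geq 1$, hence each $Q_i$ a segment since $r=n$). The key computation is that for the vertex family $\calV=\calV_{[n]}$, each mixed polytope contributes exactly $1$ to $A$ and there are no excessive points. Concretely, for a mixed polytope $Q$, the set $\calV \cap (\delta+Q)$ consists of those vertices of $\calS$ lying in the translated cell $\delta+Q$; for a generic small $\delta$ not parallel to any polytope of $\calS$, exactly one vertex of $\calS$ (the lowest vertex of $Q$ with respect to $\delta$) lands in $\delta+Q$, so $|\calV \cap (\delta+Q)|=1$ and $A$ equals the number of mixed polytopes. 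The hard part will be verifying that the correction terms $|\mbox{Exc}_{\calS,\delta}|$ and $N_{\delta}(\bV)$ both vanish for this particular family, since a priori they need not.

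To handle these, I would argue as follows. For $N_{\delta}(\bV)$, one iterates Proposition \ref{P:Nsumg} exactly as in Corollary \ref{C:lowerdiscrete}, pushing down to $N_{\delta_1,\ldots,\delta_n}(\bV)=0$; but I instead want to show each intermediate contribution $\sum_j(A'_j+|\mbox{Exc}_{\calS'_j,\delta'}|)$ vanishes, so that not only the bottom term but the whole correction collapses. The clean way is to observe that for the vertex family, a lower-dimensional face $F$ of $P$ carries \emph{no} mixed polytope of full dimension (a mixed polytope is $n$-dimensional, but $F$ has dimension $<n$), so the quantity $A'_j$ attached to each facet of $P_{\delta_1,\ldots,\delta_k}$ counts mixed polytopes inside a lower-dimensional cell and is forced to be zero once one checks that property (S) for the vertex family restricted to $F$ produces no excessive points either. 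The absence of excessive points for $\bV$ is the crucial structural input: since $\calV_I$ is literally the vertex set of $\calS_I$, the injective map $\varphi_{\calS,I}$ of Lemma \ref{L:partial} sends the vertices of $\calS_I$ lying in $\delta+Q_I$ bijectively onto vertices of $\calS$ in the corresponding cells, with no vertex of $\calS$ left unaccounted for — every vertex of $\calS$ that is not the distinguished lowest vertex of a mixed cell arises as $v+v_{Q_I}$ for some proper $I$. I expect this bijectivity of the vertex transport, equivalently $|\mbox{Exc}_{\calS,\delta}|=0$ for $\bV$, to be the main obstacle, and I would establish it by a direct local analysis at each vertex of $\calS$ using the privileged Minkowski writing $Q=Q_1+\cdots+Q_n$ and the purity of $\calS$. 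Granting this, \eqref{E:Nsumbis} reduces to $N(\bV)=A=\#\{\text{mixed polytopes of }\calS\}$, which is the claim.
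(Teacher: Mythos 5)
Your proposal is correct and follows essentially the same route as the paper: apply Proposition \ref{P:Nsum} to the vertex family $\bV$, identify $A$ with the number of mixed polytopes (exactly one vertex of $\calS$ lies in each translated mixed cell $\delta+Q$), observe that $\mbox{Exc}_{\calS,\delta}$ is empty, and kill $N_{\delta}(\bV)$ by iterating Proposition \ref{P:Nsumg} (with $A'_j=0$ and empty excessive sets) down to $N_{\delta_1,\ldots,\delta_n}(\bV)=(1-1)^n=0$ via Proposition \ref{P:zero}. The only differences are expository: where the paper asserts the vanishing of the correction terms as obvious, you supply the dimension/purity argument for $A'_j=0$ and sketch the vertex-transport argument showing no vertex of $\calS$ is excessive.
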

\begin{proof}
Denote by $\bW$ the family $(\calV_I)_{I \subset [n]}$,  which as we know satisfies property (S).
Let $\delta$ be a sufficiently small generic vector in $\R^n$.
Proposition \ref{P:Nsum} gives $N(\bW)=A+|\mbox{Exc}_{\calS,\delta}|+N_{\delta}(\bW)$.
But in our case $A$ is exactly the number mixed polytopes and obviously $\mbox{Exc}_{\calS,\delta}$ is empty.
Proposition \ref{P:Nsumg} yields $N_{\delta_1,\ldots,\delta_{k}}(\bW)=N_{\delta_1,\ldots,\delta_{k+1}}(\bW)$
since for this family we have $|\mbox{Exc}_{\calS'_j,\delta'}|=0$ and $A'_j=0$ for $j=1,\ldots,s$. Thus $N_{\delta}(\bW)=N_{\delta_1, \ldots,\delta_n}(\bW)$ for sufficiently small generic
linearly independent vectors $\delta_1,\ldots,\delta_n \in \R^n$, and the result follows as
$N_{\delta_1,\ldots,\delta_n}(\bW)=(1-1)^n=0$.
\end{proof}

We note that an almost identical proof allows us to show the classical equality $N(\bW)=
MV(P_1,\ldots,P_n)$ for the family $\bW$ defined by $\calW_I=\Z^n \cap P_I$,
see Example \ref{E:mainsum}.


\section{Discrete mixed volume and fewnomial bounds for tropical systems}
\label{S:discrete}

Let $\calW_1,\ldots,\calW_{r}$ be finite subsets of $\R^n$ and consider the associated discrete mixed volume
\begin{equation}\label{E:N}
D(\calW_1,\ldots,\calW_{r})=\sum_{I \subset {\bf [}r {\bf ]} } {(-1)}^{r-|I|}
\big| \sum_{i \in I} \calW_i \big|.
\end{equation}
Later on we will simply write $\calW_I$ for $\sum_{i \in I} \calW_i$.
One simple observation is the following one.

\begin{prop}\label{P:bijective}
Assume that for any nonempty $I\subset {\bf [}r {\bf ]}$,  the map $\prod_{i \in I} \calW_i \mapsto \sum_{i \in I} \calW_i$, $(w_i)_{i \in I} \mapsto \sum_{i \in I} w_i$ is bijective.
Then
$$D(\calW_1,\ldots,\calW_{r})=\prod_{i \in {\bf [}r {\bf ]}} (|\calW_i|-1).$$
\end{prop}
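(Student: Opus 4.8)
The plan is to translate the bijectivity hypothesis into a statement about cardinalities and then recognize the resulting alternating sum as a factored product. First I would observe that for every nonempty $I\subset[r]$ the summation map $(w_i)_{i\in I}\mapsto\sum_{i\in I}w_i$ from $\prod_{i\in I}\calW_i$ to $\calW_I=\sum_{i\in I}\calW_i$ is surjective by the very definition of $\calW_I$. Hence this map is bijective if and only if it is injective, which in turn happens if and only if $|\calW_I|=\prod_{i\in I}|\calW_i|$. So the hypothesis is exactly the assertion that $|\calW_I|=\prod_{i\in I}|\calW_i|$ for every nonempty $I\subset[r]$.

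Next I would substitute these equalities into the defining formula for the discrete mixed volume. Using the convention $|\calW_{\emptyset}|=1$, which agrees with the empty product $\prod_{i\in\emptyset}|\calW_i|=1$, this gives
$$D(\calW_1,\ldots,\calW_r)=\sum_{I\subset{\bf [}r{\bf ]}}(-1)^{r-|I|}\prod_{i\in I}|\calW_i|.$$

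Finally I would recognize the right-hand side as the expansion of a product. Indeed, expanding $\prod_{i=1}^r(|\calW_i|-1)$ and collecting, for each subset $I\subset[r]$, the term obtained by choosing the factor $|\calW_i|$ for each $i\in I$ and the factor $-1$ for each of the remaining $r-|I|$ indices, one obtains
$$\prod_{i\in{\bf [}r{\bf ]}}(|\calW_i|-1)=\sum_{I\subset{\bf [}r{\bf ]}}(-1)^{r-|I|}\prod_{i\in I}|\calW_i|,$$
which is precisely the expression in the previous display. Comparing the two then yields the claimed identity.

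There is essentially no serious obstacle in this argument: it is a routine computation. The only point requiring a little care is the equivalence between bijectivity of the summation map and the cardinality identity $|\calW_I|=\prod_{i\in I}|\calW_i|$ (which rests on the automatic surjectivity of that map), together with the correct bookkeeping of the empty-set term via the stated convention $|\calW_{\emptyset}|=1$.
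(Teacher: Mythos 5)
Your proof is correct and follows exactly the same route as the paper's: the paper's entire proof is the identity $\prod_{i \in {\bf [}r{\bf ]}}(|\calW_i|-1)=\sum_{I \subset {\bf [}r{\bf ]}}(-1)^{r-|I|}\prod_{i \in I}|\calW_i|$, with the substitution $|\calW_I|=\prod_{i\in I}|\calW_i|$ (from bijectivity) left implicit. You have simply spelled out the two steps the paper compresses into one line.
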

\begin{proof}
Indeed, we have $\prod_{i \in {\bf [}r {\bf ]}} (|\calW_i|-1)=\sum_{I \subset {\bf [}r {\bf ]} } {(-1)}^{r-|I|}
\prod_{i \in I} |\calW_i |$.
\end{proof}

\begin{cor}
\label{C:general}
If the dimensions of the affine spaces spanned by $\calW_1,\ldots,\calW_r$ sum up to the dimension of the affine spaced spanned by $\calW_1+\cdots+\calW_r$, or if
$\calW_1,\ldots,\calW_r$ are in general position, then
$D(\calW_1,\ldots,\calW_{r})=\prod_{i \in {\bf [}r {\bf ]}} (|\calW_i|-1)$.
\end{cor}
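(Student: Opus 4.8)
The plan is to deduce the corollary directly from Proposition~\ref{P:bijective}: it suffices to check that, under either hypothesis, the summation map $\sigma_I\colon \prod_{i\in I}\calW_i \to \sum_{i\in I}\calW_i$, $(w_i)_{i\in I}\mapsto \sum_{i\in I} w_i$, is bijective for every nonempty $I\subset [r]$. Since $\sigma_I$ is surjective by the very definition of the Minkowski sum of sets, the whole task reduces to proving injectivity of each $\sigma_I$.

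For the first hypothesis I would pass to linear data. Fix base points $a_i\in\calW_i$ and let $V_i\subset\R^n$ be the linear span of $\{w-a_i : w\in\calW_i\}$, so that $d_i:=\dim\mathrm{aff}(\calW_i)=\dim V_i$ and, using $\mathrm{aff}(A+B)=\mathrm{aff}(A)+\mathrm{aff}(B)$, the affine hull of $\calW_1+\cdots+\calW_r$ is a translate of $V_1+\cdots+V_r$. Hence the hypothesis $\sum_i d_i = \dim\mathrm{aff}(\calW_1+\cdots+\calW_r)$ reads $\dim(V_1+\cdots+V_r)=\sum_i\dim V_i$, which is exactly the condition that the sum $V_1+\cdots+V_r$ be direct. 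Directness descends to every subfamily $\{V_i : i\in I\}$, so if $\sum_{i\in I} w_i=\sum_{i\in I} w_i'$ with $w_i,w_i'\in\calW_i$, then $\sum_{i\in I}(w_i-w_i')=0$ with each $w_i-w_i'\in V_i$, forcing $w_i=w_i'$. Thus every $\sigma_I$ is injective, and Proposition~\ref{P:bijective} gives the claim.

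For the second hypothesis I would argue that injectivity of all the $\sigma_I$ is an open dense condition on the configuration of points, which is the meaning of ``general position'' here. Treating the coordinates of all points of $\calW_1,\ldots,\calW_r$ as free real variables, failure of injectivity of some $\sigma_I$ means that two tuples $(w_i)_{i\in I}\neq(w_i')_{i\in I}$ satisfy the linear relation $\sum_{i\in I}(w_i-w_i')=0$. If the tuples differ in a coordinate $i_0$, the two distinct points $w_{i_0},w_{i_0}'\in\calW_{i_0}$ enter this relation with coefficients $+1$ and $-1$, so the relation is a nontrivial linear equation in the configuration variables and cuts out a proper closed subset. Taking the finite union over all nonempty $I$ and all pairs of distinct tuples still yields a proper closed subset; on its complement every $\sigma_I$ is injective, and Proposition~\ref{P:bijective} again applies.

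I expect the only real subtlety to be bookkeeping rather than substance: one must check that directness of the full sum $V_1+\cdots+V_r$ genuinely descends to each subfamily indexed by $I$ (so that the hypothesis of Proposition~\ref{P:bijective}, which quantifies over all nonempty $I$, is met), and one must fix a precise meaning of ``general position'' and verify that the degeneracy locus is a proper subvariety and not an identity. Both points are routine once the reduction to injectivity of the maps $\sigma_I$ is in place, so the genuine content lies in translating each geometric hypothesis into the directness or genericity statement that controls $\sigma_I$.
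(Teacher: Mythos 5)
Your proof is correct and follows exactly the route the paper intends: the corollary is stated there without proof as an immediate consequence of Proposition~\ref{P:bijective}, and your verification that each hypothesis forces injectivity (hence bijectivity) of every summation map $\sigma_I$ is precisely the routine argument the paper leaves implicit. Both halves are sound --- the dimension hypothesis is equivalent to directness of $V_1+\cdots+V_r$, which descends to subfamilies, and the genericity argument correctly identifies the degeneracy locus as a finite union of proper linear conditions.
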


\begin{figure}[htb]
\begin{center}
\includegraphics[scale=0.4]{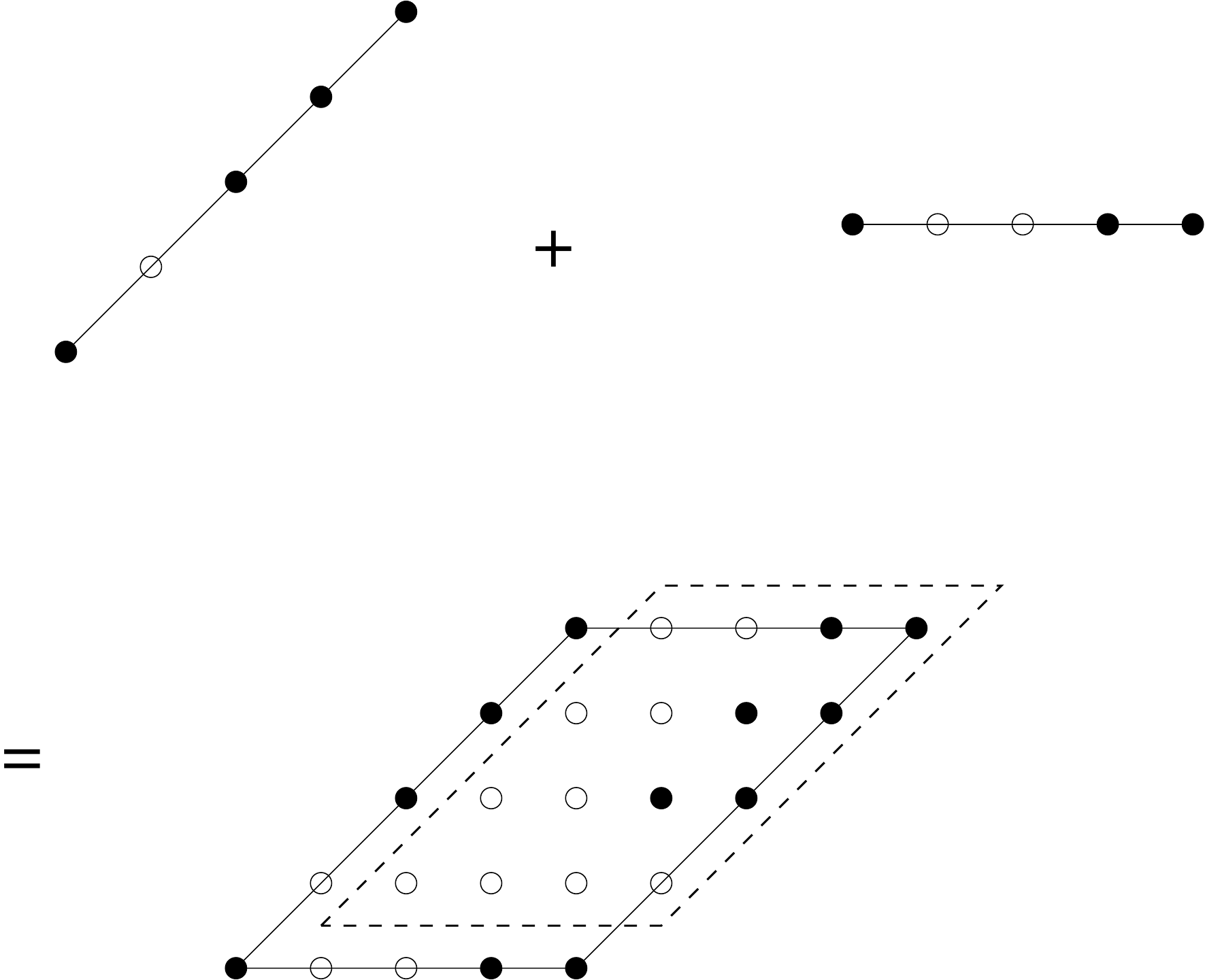}
\caption{Discrete mixed volume of a mixed polytope.}  
  \label{F:discretemixed polytope}\end{center}
\end{figure}
\bigskip

The next result has a to be compared with Proposition \ref{P:brick}. It shows some similarity between the classical mixed volume and the discrete one. 

\begin{prop}\label{P:brickbis}
Let $\calW_1,\ldots,\calW_{n}$ be finite subsets of $\R^n$ with convex-hulls $Q_1,\ldots,Q_n$, respectively. Assume that $Q=Q_1+\cdots+Q_n$ has dimension $n$ and each $Q_i$ is a segment. Then for any sufficiently small vector $\delta \in \R^n$ not parallel to a face of  $Q$, we have
\begin{equation}\label{E:Nmixed polytope}
D(\calW_1,\ldots,\calW_{n})=|(\calW_1 +  \cdots + \calW_{n}) \cap (\delta+Q)|
\end{equation}
\end{prop}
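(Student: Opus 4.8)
The plan is to show that both sides of \eqref{E:Nmixed polytope} equal the Kouchnirenko number $\prod_{i=1}^n(|\calW_i|-1)$. For the left-hand side this is immediate from the hypotheses: each $Q_i$ is a segment, so the affine span of $\calW_i$ has dimension $1$, and since $\dim Q=n$ these dimensions sum to the dimension of the affine span of $\calW=\calW_1+\cdots+\calW_n$ (whose convex hull is $Q$). Hence Corollary \ref{C:general} gives $D(\calW_1,\ldots,\calW_n)=\prod_{i=1}^n(|\calW_i|-1)$, and it remains only to prove that $|\calW\cap(\delta+Q)|=\prod_{i=1}^n(|\calW_i|-1)$ for $\delta$ small and not parallel to a face of $Q$.

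First I would pass to coordinates adapted to the parallelepiped $Q$. Because each $Q_i$ is a segment and $\dim Q=n$, the $n$ edge directions of $Q_1,\ldots,Q_n$ are linearly independent, so a linear change of coordinates carries them to the coordinate directions $e_1,\ldots,e_n$. This change, together with a harmless translation, affects neither side of \eqref{E:Nmixed polytope}: the value $D$ depends only on cardinalities of Minkowski sums and is preserved by any injective affine map, while $|\calW\cap(\delta+Q)|$ is carried bijectively to the corresponding count for the transformed data (with $\delta$ replaced by its small, still non-facet-parallel image). After this reduction I may assume $Q=\prod_{i=1}^n[\alpha_i,\beta_i]$ is an axis-parallel box and $\calW=S_1\times\cdots\times S_n$ is a product grid, where $S_i\subset\R$ is finite with $\min S_i=\alpha_i$, $\max S_i=\beta_i$ and $|S_i|=|\calW_i|$; the product structure of $\calW$ reflects the fact that the summands $\calW_i$ vary along distinct coordinate directions.

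It then suffices to count. Writing $\delta=(\delta^{(1)},\ldots,\delta^{(n)})$, a grid point $s=(s_1,\ldots,s_n)\in\calW$ lies in $\delta+Q$ if and only if $\alpha_i\le s_i-\delta^{(i)}\le\beta_i$ for every $i$. Since $\delta$ is not parallel to any facet of the box we have $\delta^{(i)}\neq 0$ for all $i$, and taking $\delta$ small enough (in each coordinate, smaller in absolute value than the least gap between consecutive elements of $S_i$) this pair of inequalities excludes from $S_i$ exactly one endpoint and nothing else: the minimum $\alpha_i$ when $\delta^{(i)}>0$, and the maximum $\beta_i$ when $\delta^{(i)}<0$. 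Each coordinate therefore contributes $|S_i|-1$ admissible values, so $|\calW\cap(\delta+Q)|=\prod_{i=1}^n(|S_i|-1)=\prod_{i=1}^n(|\calW_i|-1)$, matching the left-hand side.

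I do not expect a genuine obstacle: the essential point is simply that a mixed polytope built from segments in general position becomes, after a linear change of coordinates, a box on which $\calW$ is a Cartesian grid, so that the whole computation collapses to an endpoint-peeling count factoring over the coordinates. The only steps needing care are the affine invariance of both sides under the coordinate change and the precise quantification of ``sufficiently small $\delta$'' guaranteeing that exactly one endpoint is removed in each direction; both are routine. (One could instead run the irrational decomposition machinery of Proposition \ref{P:Nsum}, but the product structure makes the direct count above considerably shorter.)
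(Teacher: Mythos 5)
Your proposal is correct and follows essentially the same route as the paper: the paper's proof also begins by invoking Corollary \ref{C:general} to identify $D(\calW_1,\ldots,\calW_n)$ with $\prod_{i=1}^n(|\calW_i|-1)$, and then asserts that the equality $\prod_{i=1}^n(|\calW_i|-1)=|(\calW_1+\cdots+\calW_n)\cap(\delta+Q)|$ is ``not difficult to show'' using Lemma \ref{L:lower}, pointing to a figure. Your coordinate change to an axis-parallel box and the endpoint-peeling count simply supply, in full detail, the elementary verification that the paper leaves to the reader.
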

\begin{proof}
By Corollary \ref{C:general} we have $D(\calW_1,\ldots,\calW_{n})=\prod_{i=1}^n (|\calW_i|-1)$.
It it is not difficult to show using Lemma \ref{L:lower} that $\prod_{i=1}^n (|\calW_i|-1)=|(\calW_1 +  \cdots + \calW_{n}) \cap (\delta+Q)|$
when $Q=Q_1+\cdots+Q_n \subset \R^n$ has dimension $n$ and each $Q_i$ is a segment (see Figure \ref{F:discretemixed polytope}). \end{proof}

\begin{figure}[ht]
\begin{center}
\includegraphics[scale=0.4]{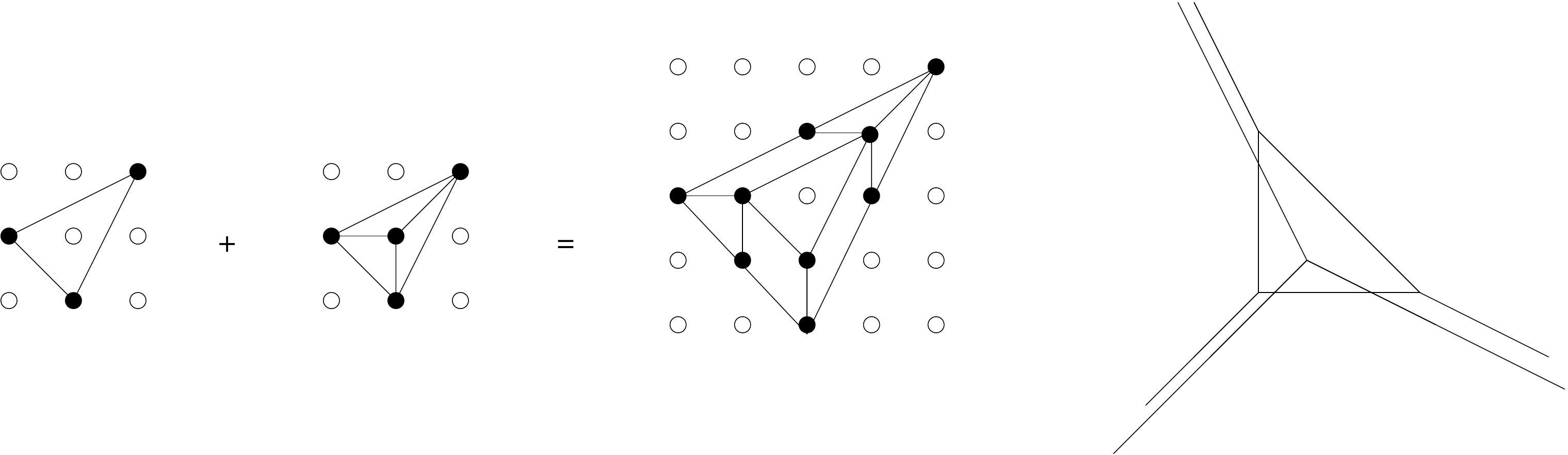}
\caption{ Three intersection points and $D(\calW_1,\calW_2)=3$.}  
\label{DiscExamplebig}\end{center}
\end{figure}

\begin{figure}[ht]
\begin{center}
\includegraphics[scale=0.4]{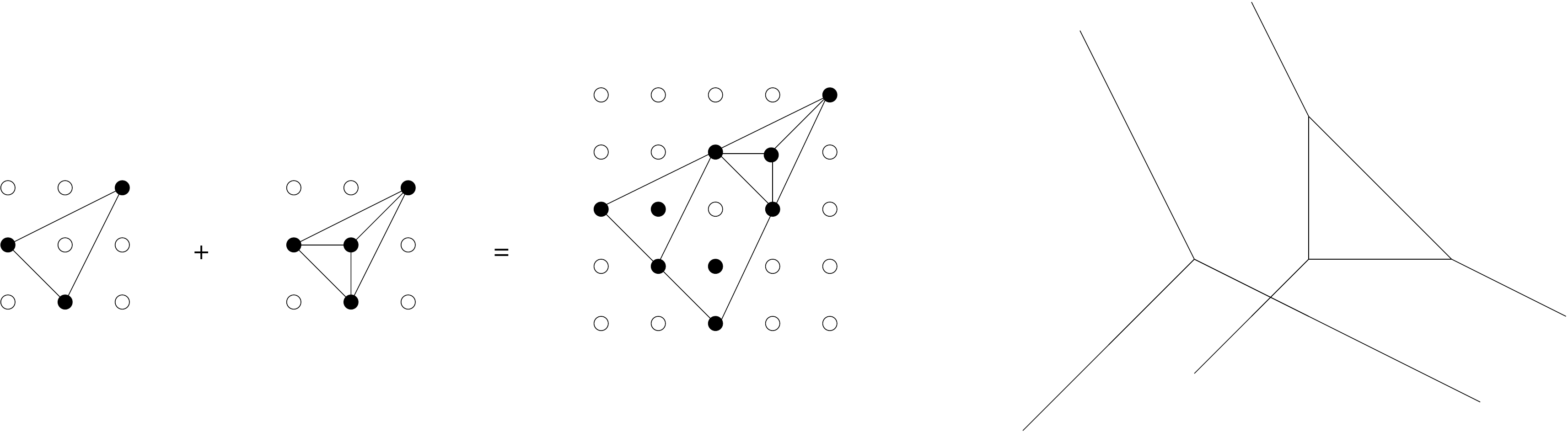}
\caption{One intersection point and $D(\calW_1,\calW_2)=3$.}  
\label{DiscExamplebigbis}\end{center}
\end{figure}

The main result of this section is the following one.

\begin{thm}
\label{T:Mainbis}
The number of nondegenerate solutions of a tropical polynomial system
with supports $\calW_1,\ldots,\calW_n \subset \R^n$
is bounded from above by $D(\calW_1,\ldots,\calW_{n})$.
\end{thm}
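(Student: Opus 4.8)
The plan is to set up a pure convex mixed subdivision $\calS$ of $P=P_1+\cdots+P_n$ refining the duality that governs the given tropical system, and then to apply the mixed irrational decomposition machinery to the discrete family $\bW=(\calW_I)$ with $\calW_I=\sum_{i\in I}\calW_i$. Recall from the discussion in Section 2 that the nondegenerate solutions of a tropical polynomial system with supports $\calW_1,\ldots,\calW_n$ are in bijection with the mixed polytopes $Q=Q_1+\cdots+Q_n\in\calS$, that is, the $n$-dimensional $Q\in\calS$ all of whose privileged summands $Q_i$ are segments. So the number I must bound from above is precisely the number of such mixed polytopes. The family $\bW$ satisfies property (S) by item (1) of Example \ref{E:main}, provided the defining functions $\mu_1,\ldots,\mu_n$ are chosen generically; here I would instead take the $\mu_i$ to be exactly the functions defining the tropical system (equivalently, the ones inducing $\calS$), so that the mixed polytopes of $\calS$ are the geometric incarnations of the solutions I wish to count.

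The heart of the argument is Proposition \ref{P:Nsum}, which for a sufficiently small generic $\delta$ gives
\begin{equation*}
D(\calW_1,\ldots,\calW_n)=N(\bW)=A+|\mbox{Exc}_{\calS,\delta}|+N_{\delta}(\bW),
\end{equation*}
where $A=\sum_{Q\ \mbox{\tiny mixed}}|\calW\cap(\delta+Q)|$ runs over the mixed polytopes of $\calS$. The strategy is to show that each of the three terms on the right is a nonnegative integer and that the first term $A$ already dominates the count of mixed polytopes. For the nonnegativity of $N_{\delta}(\bW)$ I would invoke Corollary \ref{C:lowerdiscrete} together with Proposition \ref{P:zero}, exactly as in the proof of Theorem \ref{T:nonnegative}(1): since $u_I\in\calW_I$ for every nonempty $I$, one gets $N_{\delta}(\bW)\ge N_{\delta_1,\ldots,\delta_n}(\bW)=(1-1)^n=0$. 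The term $|\mbox{Exc}_{\calS,\delta}|$ is a cardinality and hence trivially $\ge 0$. Thus $D(\calW_1,\ldots,\calW_n)\ge A$.

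It then remains to bound the number of mixed polytopes by $A$. Here I would use that each mixed polytope $Q=Q_1+\cdots+Q_n$ is an $n$-dimensional Minkowski sum of $n$ segments $Q_i$ whose endpoints lie in $\calW_i$, so by Proposition \ref{P:brickbis} (the discrete analogue of the brick lemma) the translate $\delta+Q$ contains at least one point of $\calW=\calW_1+\cdots+\calW_n$; indeed $|\calW\cap(\delta+Q)|=D(\calW_1^Q,\ldots,\calW_n^Q)=\prod_{i=1}^n(|\calW_i^Q|-1)\ge 1$, where $\calW_i^Q$ denotes the two endpoints of the segment $Q_i$. Consequently each summand $|\calW\cap(\delta+Q)|$ in $A$ is at least $1$, and since $\delta$ is generic the translates $\delta+Q$ over distinct $n$-dimensional $Q\in\calS$ are disjoint, so these contributions do not overlap. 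Therefore $A$ is at least the number of mixed polytopes, which equals the number of nondegenerate solutions, and combining with $D=A+|\mbox{Exc}_{\calS,\delta}|+N_{\delta}(\bW)\ge A$ yields the claim.

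The main obstacle I anticipate is bookkeeping the privileged Minkowski decomposition carefully enough to guarantee both the disjointness of the translates $\delta+Q$ and the property-(S) inclusions that make $\varphi_{\calS,I}$ well defined and injective; one must check that the same generic $\delta$ works simultaneously for all $n$-dimensional cells and is not parallel to any face of any $Q\in\calS$. A secondary subtlety is making sure the functions $\mu_i$ chosen to realize the prescribed tropical subdivision are generic enough for $\bW$ to satisfy property (S) with respect to them; if the naive choice is not generic, I would perturb slightly without changing the combinatorial type of $\calS$, since the set of mixed polytopes (and hence the solution count) is determined by the combinatorics and the bound $D$ depends only on the sets $\calW_i$, not on $\delta$ or $\calS$.
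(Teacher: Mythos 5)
Your proposal is correct and follows essentially the same route as the paper's own proof: reduce to a pure dual mixed subdivision by a small perturbation (the paper translates the tropical hypersurfaces, which cannot decrease the number of nondegenerate solutions), apply Proposition \ref{P:Nsum} to write $D(\calW_1,\ldots,\calW_n)=A+|\mbox{Exc}_{\calS,\delta}|+N_{\delta}(\bW)$, observe that $A$ dominates the number of mixed polytopes since each $|\calW\cap(\delta+Q)|\geq 1$, and get $N_{\delta}(\bW)\geq 0$ from Corollary \ref{C:lowerdiscrete} and Proposition \ref{P:zero}. The only cosmetic differences are that the paper obtains $|\calW\cap(\delta+Q)|\geq 1$ by noting that $\delta+Q$ contains a vertex of $Q$ (rather than invoking Proposition \ref{P:brickbis}, where your displayed equality should anyway be the inequality $|\calW\cap(\delta+Q)|\geq|(\calW_1^Q+\cdots+\calW_n^Q)\cap(\delta+Q)|$), and that when the original subdivision fails to be pure the perturbation necessarily changes its combinatorial type, so one argues as the paper does that the solution count does not drop, rather than that the type is preserved.
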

\begin{proof}

By translating the tropical hypersurfaces by small vectors, we obtain a tropical polynomial
system with at least the same number of nondegenerate solutions and whose dual mixed subdivision $\calS$ is pure. Let $\delta$ be any sufficiently small generic vector in $\R^n$.
Denote by $\bW$ the family $(\calW_I)_{i \in I}$. We apply Proposition \ref{P:Nsum} to obtain
$D(\calW_1,\ldots,\calW_{n})=A+|\mbox{Exc}_{\calS,\delta}|+N_{\delta}(\bW)$. It is easy to show that if $Q=Q_1+\cdots+Q_n$ is a $n$-dimensional polytope with
$\dim Q_i=1$ for $i=1,\ldots,n$, and if $\delta$ is a sufficiently small vector in $\R^n$, then $\delta+Q$ contains at least one vertex of $Q$.
Since the vertices of $\calS$ belong to $\calW$, it follows that $|\calW \cap (\delta+Q)| \geq 1$ for any mixed polytope $Q \in \calS$. Thus
$A$ is bounded from below by the number of mixed polytopes $Q \in \calS$. By Corollary \ref{C:lowerdiscrete}, we have $N_{\delta}(\bW) \geq N_{\delta, \delta_2, \ldots,\delta_n}(\bW)$ for
any sufficiently small generic linearly independent vectors $\delta,\delta_2,\ldots,\delta_n \in \R^n$. Applying Proposition \ref{P:zero} yields then $N_{\delta, \delta_2, \ldots,\delta_n}(\bW)=0$.
Therefore, the number of mixed polytopes of $\calS$ does not exceed $D(\calW_1,\ldots,\calW_{n})$.
\end{proof}

Note that in Theorem \ref{T:Mainbis} we do not assume that $\calW_1,\ldots,\calW_n \subset \Z^n$. Consider finite sets $\calW_1,\ldots,\calW_n \subset \Z^n$
with convex-hulls $P_1,\ldots,P_n$ respectively. We want to compare $D(\calW_1,\ldots,\calW_{n})$ with $MV(P_1,\ldots,P_n)$. We again apply Proposition \ref{P:Nsum} to obtain
$$D(\calW_1,\ldots,\calW_{n})=A+|\mbox{Exc}_{\calS,\delta}|+N_{\delta}(\bW).$$

Recall that $A$ is the sum over all mixed polytopes $Q \in \calS$ of the quantities $|\calW \cap (\delta+Q)|$.
On the other hand, the mixed volume $MV(P_1,\ldots,P_n)$ is the sum over the same polytopes of the quantities $|\Z^n\cap (\delta+Q)|$
(see Proposition \ref{P:brick} and Proposition \ref{P:mixedvolumesum}). Thus $A \leq MV(P_1,\ldots,P_n)$, and
if $|\mbox{Exc}_{\calS,\delta}|=N_{\delta}(\bW)=0$, then $D(\calW_1,\ldots,\calW_{n}) \leq MV(P_1,\ldots,P_n)$.
However, the latter inequality is not always true due to the presence of excessive points for $\calS$, or for intermediate subdivisions $\calS_j'$ as considered in Proposition \ref{P:Nsumg}
(it may happen that some $N_{\delta_1,\ldots,\delta_k}(\bW)$ is strictly positive in Corollary \ref{C:lowerdiscrete}). A simple example is provided in Example \ref{E:disbigger}.
On the other hand, if $\calW_1,\ldots,\calW_n$ are sets for which there exists a convex mixed pure subdivision $\calS$ with maximal set of vertices $\calW_1+\cdots+\calW_n$,
then we easily get $D(\calW_1,\ldots,\calW_{n})=A$ so that $D(\calW_1,\ldots,\calW_{n}) \leq MV(P_1,\ldots,P_n)$.

\begin{figure}[ht]
\begin{center}
\includegraphics[scale=0.5]{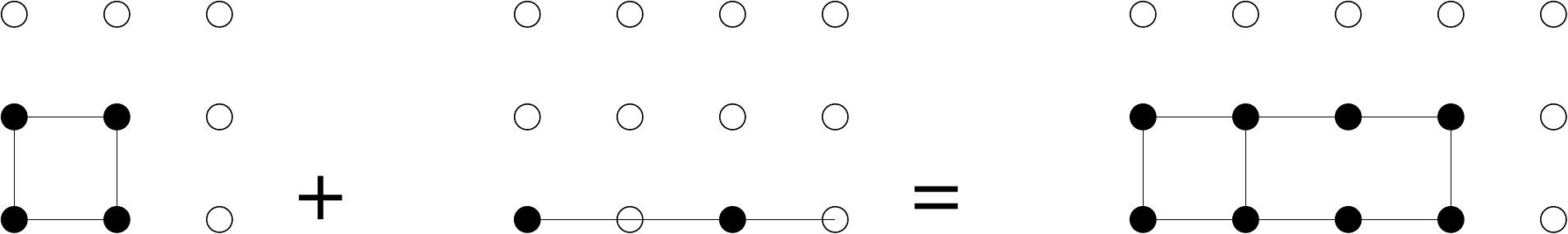}
\caption{}  
  \label{F:DiscExample5}\end{center}
\end{figure}

\begin{example}\label{E:disbigger}
In Figure \ref{F:DiscExample5} we have $\calW_1=\{(0,0),(1,0),(1,1),(0,1)\}$,  $\calW_2=\{(0,0),(2,0)\}$. Then $D(\calW_1, \calW_2)=8-4-2+1 =3 > 2=\mbox{MV}(P_1,P_2)$. If we consider
the quantity $N_{\delta}(\bW)$ associated with $\delta=(0,1)$ and the family $\bW$ formed by the sets $\calW_1,\calW_2$ and $\calW_1+\calW_2$, we retrieve the example given in Figure \ref{F:DiscExample6} providing an excessive point.
\end{example}


There is an important difference between the  discrete mixed volume and the classical one. Namely, as it is well-known the classical mixed volume is increasing, which means that
$MV(P_1,\ldots,P_n) \leq MV(P'_1,\ldots,P'_n)$ if $P_i \subset P'_i$ for $i=1,\ldots,n$.
This is not the case for the discrete mixed volume as shown in Example \ref{E:excplusdefecbis}.

%
%
%
%
%

\begin{ex}\label{E:excplusdefecbis}
Let $\calW_1=\{(0,0),(2,1),(1,2), (2,2)\}$ and $\calW_2=\{(0,0),(2,1),(1,2),(1,1)\}$. These are the sets considered in Example \ref{E:excplusdefec}.
Set $\calW_1'=\calW_1 \cup \{(1,1)\}$ and $\calW_2'=\calW_2$.
Then, $D(\calW_1',\calW_2')=D(\calW_1,\calW_2)-1 < D(\calW_1,\calW_2)$ though $\calW_1 \subset \calW_1'$ and
$\calW_2 \subset \calW_2'$. Adding the point $(1,1)$ to $\calW_1$ removes an excessive point for the mixed subdivision considered in Example \ref{E:excplusdefec}, and decreases the discrete mixed volume by one.
\end{ex}

Recall that $D(\calW_1,\ldots,\calW_{r})=\prod_{i \in {\bf [}r {\bf ]}} (|\calW_i|-1)$ when $\calW_1,\ldots,\calW_r \subset \R^n$ are in general position
(see Proposition \ref{P:bijective}).

\begin{thm}\label{T:boundsforNKouch} 
For any finite sets $\calW_1,\ldots,\calW_{r}$ in  $\R^n$ we have
$$
D(\calW_1,\ldots,\calW_{r}) \leq  \prod_{i \in {\bf [}r {\bf ]}} (|\calW_i|-1).$$
\end{thm}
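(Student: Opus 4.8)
The plan is to prove the inequality
$$D(\calW_1,\ldots,\calW_r) \leq \prod_{i \in [r]} (|\calW_i|-1)$$
by showing that the right-hand side is exactly the discrete mixed volume of \emph{generic perturbations} of the sets $\calW_i$, and that perturbing can only increase $D$. Concretely, for each $i$ I would choose a generic displacement that moves the points of $\calW_i$ into general position while keeping them arbitrarily close to their original locations. By Corollary \ref{C:general}, the perturbed sets $\calW_1^{\rm gen},\ldots,\calW_r^{\rm gen}$ satisfy $D(\calW_1^{\rm gen},\ldots,\calW_r^{\rm gen}) = \prod_{i}(|\calW_i^{\rm gen}|-1) = \prod_i (|\calW_i|-1)$, since the perturbation preserves the cardinalities. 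So the whole content of the theorem reduces to the monotonicity statement flagged in the text just before Theorem \ref{T:kouchbound}: that \emph{$D$ cannot increase under small translations of the individual points}. Equivalently, separating points by a generic perturbation cannot decrease $D$, hence the original value is bounded above by the generic value.

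To establish that monotonicity I would run the irrational decomposition machinery. Fix a pure convex mixed subdivision $\calS$ of $P = P_1 + \cdots + P_r$ adapted to the family $\bW = (\calW_I)$ where $\calW_I = \sum_{i\in I}\calW_i$. Applying Proposition \ref{P:Nsum} with a sufficiently small generic $\delta$ gives
$$D(\calW_1,\ldots,\calW_r) = A + |\mbox{Exc}_{\calS,\delta}| + N_\delta(\bW),$$
and iterating Proposition \ref{P:Nsumg} down through $\delta_1,\ldots,\delta_n$ together with Proposition \ref{P:zero} shows $N_{\delta_1,\ldots,\delta_n}(\bW) = 0$. Thus $D(\calW_1,\ldots,\calW_r)$ decomposes as a sum of the nonnegative contributions $A$ (over mixed polytopes) and the various excessive-point counts $|\mbox{Exc}_{\calS,\delta}|$, $|\mbox{Exc}_{\calS'_j,\delta'}|$ appearing at each stage of Corollary \ref{C:lowerdiscrete}. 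The key geometric observation is that when a point $w_i \in \calW_i$ coincides with (or lies in a degenerate affine-dependence relation with) other sum-points, some point of $\calW = \sum_i \calW_i$ fails to be realized with full multiplicity, producing an excessive point; separating the points by a generic translation removes exactly such coincidences and hence can only enlarge the term $A$ relative to the excessive terms. This is precisely the phenomenon illustrated in Example \ref{E:excplusdefecbis}, read in reverse.

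The cleanest route, and the one I would actually write, avoids tracking excessive points individually and instead argues directly: apply the decomposition $D(\calW_1,\ldots,\calW_r) = A + |\mbox{Exc}_{\calS,\delta}| + N_\delta(\bW)$ to a mixed subdivision chosen so that all mixed polytopes are \emph{bricks} in the sense of Proposition \ref{P:brickbis}. For each mixed polytope $Q = Q_1+\cdots+Q_r$ with each $Q_i$ a segment and full-dimensional sum, Proposition \ref{P:brickbis} identifies $|\calW \cap (\delta+Q)|$ with the brick-product $\prod_i(|\calW_i^{(Q)}|-1)$ of the relevant subsets; summing over mixed polytopes and bounding each local contribution against the global product $\prod_i(|\calW_i|-1)$ yields the inequality. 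The main obstacle is the bookkeeping that shows the local brick-products, summed over all mixed polytopes of $\calS$ and combined with the nonnegative excessive and lower-dimensional terms, telescope to at most $\prod_i(|\calW_i|-1)$; this requires verifying that each point $w \in \calW$ contributing to a mixed brick corresponds to a distinct choice of one point per segment, so that the total count of such contributions is bounded by the number of tuples $\prod_i(|\calW_i|-1)$ minus the redundancies absorbed by the excessive points. Making this counting rigorous, rather than the irrational decomposition setup, is where the real work lies.
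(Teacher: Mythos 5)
Your first paragraph correctly identifies the paper's strategy: perturb each $\calW_i$ into general position, use Proposition \ref{P:bijective}/Corollary \ref{C:general} to get $D(\tilde{\calW}_1,\ldots,\tilde{\calW}_r)=\prod_i(|\calW_i|-1)$, and reduce everything to the inequality $D(\calW_1,\ldots,\calW_r)\le D(\tilde{\calW}_1,\ldots,\tilde{\calW}_r)$. But that inequality \emph{is} the theorem, and neither of your two sketches proves it. Your second paragraph only decomposes $D(\calW_1,\ldots,\calW_r)$ into nonnegative pieces via Propositions \ref{P:Nsum}, \ref{P:Nsumg} and \ref{P:zero}; that argument proves $D\ge 0$ (item (1) of Theorem \ref{T:nonnegative}), not an upper bound, and the added claim that perturbation ``can only enlarge $A$ relative to the excessive terms'' is left as an assertion. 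What the paper actually does is run the decomposition \emph{simultaneously} on both configurations: it transports the functions $\mu_i$ through the bijections $\calW_i\to\tilde{\calW}_i$, so that the perturbed subdivision $\tilde{\calS}$ is combinatorially isomorphic to $\calS$ via a bijection $\rho$ preserving mixed polytopes, and then compares term by term: $A\le\tilde{A}$ (a point of $\calW\cap(\delta+Q)$ is sent, via a choice of representation $\sum_i w_i$, to a point of $\tilde{\calW}\cap(\delta+\rho(Q))$, and the inequality is strict exactly when points admit several representations), $|\mbox{Exc}_{\calS,\delta}|=|\mbox{Exc}_{\tilde{\calS},\delta}|$ (an equality, not a loss term to be absorbed), and the same comparison at every stage of Proposition \ref{P:Nsumg}, until Proposition \ref{P:zero} makes both tails vanish. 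This parallel bookkeeping is the heart of the proof and is absent from your sketch; your ``key geometric observation'' also misattributes the effect of perturbation to the excessive terms, whereas in the comparison those stay equal and it is $A$ and the intermediate $A'_j$ that grow.

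The third paragraph, the route you say you would actually write, fails at two concrete points. First, Proposition \ref{P:brickbis} concerns $n$ sets whose convex hulls are segments; the theorem is stated for arbitrary $r$, and for $r\neq n$ the mixed polytopes of a pure mixed subdivision are not Minkowski sums of segments, so no choice of $\calS$ makes ``all mixed polytopes bricks.'' Second, even when $r=n$, the identification of $|\calW\cap(\delta+Q)|$ with the local product $\prod_i\bigl(|\calW_i\cap Q_i|-1\bigr)$ is false: $\calW\cap(\delta+Q)$ contains every point of the \emph{global} sum set $\calW=\sum_i\calW_i$ that happens to lie in $\delta+Q$, including sums $\sum_i w_i$ with $w_i\notin Q_i$, while Proposition \ref{P:brickbis} counts only the points of $\sum_i(\calW_i\cap Q_i)$. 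Hence the local brick products are \emph{lower} bounds for the terms of $A$ --- the wrong direction for bounding $D$ from above. The counting you defer at the end (charging each contribution to a distinct tuple, modulo ``redundancies absorbed by the excessive points'') is exactly the inequality to be proven, so as written the proposal leaves the essential content of the theorem unestablished.
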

\begin{proof}
Let $\calS$ be any pure convex mixed subdivision of $P=P_1+\cdots+P_r$ induced by functions $\mu_i: \calW_i \rightarrow \R$.
For $i=1,\ldots,r$, translate each point $w_i \in \calW_i$ into a point $\tilde{w}_i$ by a small vector in order to get a bijective map
between $\calW_i$ and the set $\tilde{\calW}_i$ of points $\tilde{w}_i$, and also a bijection
$\prod_{i=1}^r \tilde{\calW}_i \rightarrow \sum_{i=1}^r \tilde{\calW}_i$, $(\tilde{w}_i)_{i \in [r]} \mapsto \sum_{i=1}^r \tilde{w}_i$.
Then by Proposition \ref{P:bijective}
$$
N(\tilde{\calW}_1,\ldots,\tilde{\calW}_r)=\prod_{i=1}^r (|\tilde{\calW}_i|-1)=\prod_{i=1}^r (|\calW_i|-1).$$
Denote by $\tilde{P}_i$ the convex hull of $\tilde{\calW}_i$. Consider the convex mixed subdivision $\tilde{\calS}$ of $\tilde{P}=\tilde{P}_1+\cdots+\tilde{P}_r$
associated with the functions $\tilde{\mu_i}: \tilde{\calW}_i \rightarrow \R$ defined by $\tilde{\mu_i}(\tilde{w}_i)=\mu_i(w_i)$, where $\tilde{w}_i$ is the image of $w_i$ by
the above bijective map $\calW \rightarrow \tilde{\calW}_i$. If the above translation vectors are small enough, then $\tilde{\calS}$ is again a pure mixed subdivision.
Moreover, the previous bijective maps $\calW_i \rightarrow \tilde{\calW}_i$ induce a bijection $\rho:\calS \rightarrow \tilde{\calS}$ which sends a mixed polytope to a mixed polytope.
Applying Proposition \ref{P:Nsum} simultaneously to $\calS$ and $\tilde{\calS}$ yields
$$N(\calW_1,\ldots,\calW_r)=A+|\mbox{Exc}_{\calS,\delta}|+N_{\delta}(\calW_1,\ldots,\calW_r)$$
and
$$
N(\tilde{\calW}_1,\ldots,\tilde{\calW}_r)=\tilde{A}+|\mbox{Exc}_{\tilde{\calS},\delta}|+N_{\delta}(\tilde{\calW}_1,\ldots,\tilde{\calW}_r),
$$
where $\tilde{A}$ is the sum of the quantities $|(\tilde{\calW}_1+\cdots+\tilde{\calW}_n) \cap (\delta+\tilde{Q})|$ over all mixed polytopes $\tilde{Q}$ of $\tilde{\calS}$.
For a mixed polytope $Q \in \calS$ and its image $\tilde{Q}=\rho(Q) \in \tilde{\calS}$, we have
$$|(\calW_1+\cdots+\calW_r) \cap (\delta+Q)| \leq |(\tilde{\calW_1}+\cdots+\tilde{\calW_r}) \cap (\delta+\tilde{Q})|.$$
This inequality can be strict when a given point of $(\calW_1+\cdots+\calW_r) \cap Q$ can be written in several ways as a sum of points of $\calW_1,\ldots,\calW_r$.
Therefore, $A \leq \tilde{A}$. On the other hand, we obviously have
$|\mbox{Exc}_{\calS,\delta}|=|\mbox{Exc}_{\tilde{\calS},\delta}|$. Thus we are reduced to show that $N_{\delta}(\calW_1,\ldots,\calW_r) \leq N_{\delta}(\tilde{\calW_1},\ldots,\tilde{\calW_r})$.
Applying Proposition \ref{P:Nsumg}, we obtain just as before $A'_j \leq \tilde{A}'_j$ and $|\mbox{Exc}_{\calS'_j,\delta'}|=|\mbox{Exc}_{\tilde{\calS_j},\delta'}|$ for $j=1,\ldots,s$.
Thus, it remains to prove that for small generic linearly independent vectors $\delta_1,\ldots,\delta_n$, we have
$N_{\delta_1,\ldots,\delta_n}(\calW_1,\ldots,\calW_r) \leq N_{\delta_1,\ldots,\delta_n}(\tilde{\calW}_1,\ldots,\tilde{\calW}_r)$. But both members of this inequality vanish due to Proposition \ref{P:zero}.
\end{proof}

The discrete mixed volume is equal to the Kouchnirenko bound when the considered sets are in general position. On the opposite side,
it could be interesting to estimate the discrete mixed volume when all considered sets are equal. When $r=2$ this is easy.


\begin{prop}\label{P:W2}
We have $D(\calW,\calW) \leq \frac{(|\calW|-1)(|\calW|-2)}{2}$.
Thus, a system of two tropical polynomial equations with same support $\calW \subset \R^2$ has at most $\frac{(|\calW|-1)(|\calW|-2)}{2}$ nondegenerate solutions.
\end{prop}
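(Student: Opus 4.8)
The plan is to reduce the statement to the elementary fact that the sumset of an $N$-element set in a commutative group has at most $\binom{N+1}{2}$ elements. Writing $N=|\calW|$, the $r=2$ case of the definition of the discrete mixed volume gives
\[
D(\calW,\calW)=|\calW+\calW|-2|\calW|+1=|\calW+\calW|-2N+1,
\]
so it suffices to bound the cardinality of the sumset $\calW+\calW$.

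First I would observe that every element of $\calW+\calW$ has the form $w+w'$ with $w,w'\in\calW$ and that $w+w'=w'+w$. Hence the distinct elements of $\calW+\calW$ are indexed by at most the unordered pairs $\{w,w'\}$ of elements of $\calW$ (allowing $w=w'$), of which there are $\binom{N}{2}+N=\tfrac{N(N+1)}{2}$. This yields $|\calW+\calW|\le \tfrac{N(N+1)}{2}$. Note that this step uses only commutativity and in particular holds in any $\R^n$, the restriction to $\R^2$ being relevant only for the tropical application.

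Substituting into the displayed formula and simplifying,
\[
D(\calW,\calW)\le \frac{N(N+1)}{2}-2N+1=\frac{N^2-3N+2}{2}=\frac{(N-1)(N-2)}{2},
\]
which is the desired inequality. For the concluding assertion about tropical systems I would specialize Theorem~\ref{T:Main} (equivalently Theorem~\ref{T:Mainbis}) to the case $n=r=2$ with $\calW_1=\calW_2=\calW$: it bounds the number of nondegenerate solutions by $D(\calW,\calW)$, and the inequality just proved then gives the stated bound $\tfrac{(|\calW|-1)(|\calW|-2)}{2}$.

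There is essentially no obstacle here; the only point requiring a moment's attention is the commutativity observation bounding $|\calW+\calW|$, and everything else is a routine simplification. One may view this proposition as the diagonal $r=2$ instance of the general phenomenon, quantified in Theorem~\ref{T:boundsforNKouch}, that $D$ falls strictly below the Kouchnirenko number $\prod_i(|\calW_i|-1)$ precisely when distinct tuples of summands yield coinciding sums—an effect forced here by the equality $\calW_1=\calW_2$.
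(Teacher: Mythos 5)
Your proof is correct and follows essentially the same route as the paper: both bound $|\calW+\calW|\le \tfrac{|\calW|(|\calW|+1)}{2}$ by counting unordered pairs (the paper lists $2\calW$ as $\{w_i+w_j,\ i<j\}\cup\{2w_i\}$, which is the same commutativity argument), then substitute into $D(\calW,\calW)=|\calW+\calW|-2|\calW|+1$. Your explicit invocation of Theorem~\ref{T:Mainbis} for the tropical conclusion is a step the paper leaves implicit, but it matches the paper's intent exactly.
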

\begin{proof}
Let $s$ be the number of elements of $\calW$. Writing $\calW=\{w_1,\ldots,w_s\}$, we get
$2\calW=\{w_i+w_j \; , \; 1 \leq i < j \leq s\} \cup \{2w_i \; , \; 1 \leq  i \leq s\}$ and thus
$|2\calW| \leq \frac{s(s-1)}{2}+s$. This gives $N(\calW,\calW) \leq \frac{s(s-1)}{2}+s -s-s+1=
\frac{s^2-3s+2}{2}$. \end{proof}

When $|\calW|=4$ the bound in Proposition \ref{P:W2} is $3$ and is sharp, see~\cite{PR}. When $|\calW|=5$, the bound in Proposition \ref{P:W2} is $6$. To our knowledge, it is not known if this bound if sharp or not.

\end{document}